\newtheorem{thm}{Theorem}[section] \newtheorem{pro}[thm]{Proposition}
\newtheorem{lemma}[thm]{Lemma}
\newtheorem{cor}[thm]{Corollary}
\numberwithin{equation}{section}
\newtheorem{question}[thm]{Question}
\theoremstyle{remark}
\theoremstyle{definition} 
\newtheorem{rmk}[thm]{Remark} 
\newtheorem{example}[thm]{Example}
\newtheorem{df}[thm]{Definition}
\DeclareMathAlphabet{\mathpzc}{OT1}{pzc}{m}{it}
 \DeclareMathOperator*{\Hom}{Hom}
 \DeclareMathOperator*{\im}{Im}
\DeclareMathOperator*{\Gal}{Gal}
\DeclareMathOperator*{\Char}{char}
\DeclareMathOperator*{\lcm}{lcm}
\DeclareMathOperator*{\supp}{Supp}
\DeclareMathOperator*{\degram}{dr}
\DeclareMathOperator*{\parslope}{par-\mu}
\DeclareMathOperator*{\image}{im}
 \newcommand{\QQ}{\mathbb{Q}}
 \newcommand{\Aff}{\mathbb{A}}
\newcommand{\PP}{\mathbb{P}} \newcommand{\FF}{\mathbb{F}}
\newcommand{\cO}{\mathcal{O}} 
\newcommand{\scrD}{\mathcal{D}}\newcommand{\scrZ}{\mathcal{Z}}
\newcommand{\scrX}{\mathcal{X}}\newcommand{\scrY}{\mathcal{Y}}
\newcommand{\scrW}{\mathcal{W}}\newcommand{\scrC}{\mathcal{C}}
\newcommand{\cD}{\mathpzc{D}}
\newcommand{\cK}{\mathcal{K}}
\newcommand{\onto}{\twoheadrightarrow}
\begin{document}
\title{Formal orbifolds and orbifold bundles in positive characteristic} 
 \author{
  Manish Kumar
 }
 \address{
Statistics and Mathematics Unit\\
Indian Statistical Institute, \\
Bangalore, India-560059
  }
  \email{manish@isibang.ac.in}
  
  \author{A. J. Parameswaran}
  \address{
  Tata Institute of fundamental research \\
  Mumbai
  }
\begin{abstract}
  We define formal orbifolds over an algebraically closed field of arbitrary characteristic as curves together with some branch data. Their \'etale coverings and their fundamental groups are also defined. These fundamental group approximates the fundamental group of an appropriate affine curve. We also define vector bundles on these objects and the category of orbifold bundles on any smooth projective curve. Analogues of various statement about vector bundles which are true in characteristic zero are proved. Some of these are positive characteristic avatar of notions which appear in the second author's work (\cite{parabolic}) in characteristic zero.
 \end{abstract}
\maketitle
\section{Introduction}

In this article we consider curves $X$ over an algebraically closed field with some branch data $P$ on $X$ which we call formal orbifolds (see Definition \ref{main-definition}). These object $(X,P)$ when considered over fields of characteristic zero has a simpler description and were studied in \cite{parabolic}. Though many of the results in this paper are characteristic free, they were mostly known in characteristic zero. 

We define morphisms of formal orbifolds and study homotopic behaviour of these objects. It is shown that the \'etale fundamental group $\pi_1(X,P)$ are profinite groups of finite rank (Corollary \ref{pi_1-finite-rank}) and the \'etale fundamental group of any affine curve (which is of infinite rank and a mysterious object) can be approximated by the fundamental group of its smooth completion with various branch data supported on the boundary (Theorem \ref{pi_1-approx}). Hence these objects help in understanding the fundamental group of affine curves. Formal orbifolds are homotopically closer to smooth projective curves than affine curves. Moreover $(X,O)$ where $O$ is the trivial branch data is same as the projective curve $X$. So one could think of formal orbifolds as a generalization of smooth projective curves and see which results for smooth projective curves could be generalized to formal orbifolds. For instance, we prove a version of Riemann-Hurwitz formula (Theorem \ref{parabolic-RH}). 

If the branch data of a formal orbifold ``comes'' from a covering of smooth projective curves then it is called geometric and the resulting formal orbifold is called a geometric formal orbifold. We investigate which formal orbifolds are geometric formal orbifolds. Though we provide some partial results, a complete characterization is out of reach at the moment.

The second part of the paper (Section \ref{sec:orbifolds}) deals with vector bundles on a geometric formal orbifold $(X,P)$. These are defined to be $\Gamma$-bundles on $Y$ where $(Y,O)\to (X,P)$ is an \'etale $\Gamma$-Galois cover for some finite group $\Gamma$. We show that this is a well-defined category (independent of the choice of the cover $Y$) (Proposition \ref{parabolic-bundles-well-defined}) consisting of duals and tensor products. The subcategory of degree zero strongly semistable bundles (respectively essentially finite bundles) form a Tannakian category (Theorem \ref{tannakian}). Hence the notion of Nori fundamental group extends to geometric formal orbifolds as well (Remark \ref{Nori-pi_1}).

Orbifold bundles on a smooth projective curve $X$ is defined to be an object in the category of vector bundle over geometric formal orbifold $(X,P)$ where $P$ is a geometric branch data on $X$. A version of the projection formula for orbifold bundles is also proved (Theorem \ref{projection-formula}).
 
\section{Formal orbifolds and coverings}
Let $X$ be a smooth projective curve over an algebraically closed field $k$. Let $x\in X$ be any closed point.  Let $\cK_{X,x}$ denote the fraction field of the 
completion $\hat \cO_{X,x}$ of regular functions at $x$.

\begin{df}\label{main-definition}
\begin{enumerate}
 \item %(Quasi-ramification structure)
  A quasi-branch data on $X$ is a function $P$ which sends a closed point $x$ of $X$ to a finite Galois extension $P(x)$ of $\cK_{X,x}$ in some fixed algebraic closure of $\cK_{X,x}$. Let $P$ and $P'$ be two 
	\emph{quasi-branch data} on $X$, we say $P\le P'$ if $P(x)\subset P'(x)$ for all closed points $x\in X$.  
\item %(ramification structure)
The \emph{support of $P$}, $\supp(P)$ is defined to be the set of all $x\in X$ such that $P(x)$ is a non trivial extension of $\cK_{X,x}$. A quasi-branch data $P$ is said to be a \emph{branch data} if $\supp(P)$ is a finite set. 
 \item %(Parabolic curve)
 A smooth projective curve with a branch data is called a \emph{formal orbifold} . Since fiber products are not connected in general, we allow formal orbifolds to be disconnected.
\end{enumerate}
\end{df}

The branch data on $X$ with empty support is denoted by $O$ and is called the \emph{trivial branch data}.

\begin{rmk}
A formal orbifold $\scrX=(X,P)$ is equivalent to the following data: $\scrX=(X, \{x_i,P(x_i): 1\le i\le r\})$ consist of a smooth projective curve $X$ together with 
finitely many closed points $x_1,\ldots,x_r$ and finite Galois extensions $P(x_i)/\cK_{X,x_i}$ associated to $x_i$ for $1\le i\le r$.
\end{rmk}

\begin{lemma}
Let $X$ be a smooth projective curve and $P$, $P'$ be two quasi-branch data on $X$. For any closed point $x\in X$, let $(P\cap P')(x)= P(x)\cap P'(x)$ and 
$(PP')(x)=P(x)P'(x)$. Then $P\cap P'$ and $PP'$ are quasi-branch data. If $P$ or $P'$ is a branch data then so is $P\cap P'$. If $P$ and $P'$ are branch data then so is $PP'$. Also $P\le PP'$ and $P\cap P'\le P$   
and $P'$.
\end{lemma}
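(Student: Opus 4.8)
The plan is to verify each assertion pointwise at a closed point $x \in X$ and then check the global finiteness conditions on supports. First I would recall the relevant facts from field theory: if $L_1$ and $L_2$ are finite Galois extensions of a field $K$ inside a fixed algebraic closure $\bar K$, then the compositum $L_1 L_2$ is again a finite Galois extension of $K$, and the intersection $L_1 \cap L_2$ is also a finite Galois extension of $K$ (the compositum because $\Gal(L_1 L_2/K)$ embeds into $\Gal(L_1/K) \times \Gal(L_2/K)$, and the intersection because it is stable under every $K$-automorphism of $\bar K$ and is finite-dimensional over $K$). Applying this with $K = \cK_{X,x}$, $L_1 = P(x)$, $L_2 = P'(x)$ shows that $(PP')(x)$ and $(P \cap P')(x)$ are finite Galois extensions of $\cK_{X,x}$ for every closed point $x$, so $PP'$ and $P \cap P'$ are quasi-branch data.

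Next I would handle the support conditions. Observe that $\supp(P \cap P') \subseteq \supp(P) \cap \supp(P')$: if $x \notin \supp(P)$ then $P(x) = \cK_{X,x}$, hence $(P\cap P')(x) = \cK_{X,x} \cap P'(x) = \cK_{X,x}$, so $x \notin \supp(P\cap P')$; symmetrically for $P'$. Therefore if either $P$ or $P'$ is a branch data, $\supp(P\cap P')$ is contained in a finite set and $P \cap P'$ is a branch data. Similarly $\supp(PP') \subseteq \supp(P) \cup \supp(P')$: if $x$ lies in neither support then $P(x) = P'(x) = \cK_{X,x}$, so $(PP')(x) = \cK_{X,x}$. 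Hence if both $P$ and $P'$ are branch data, $\supp(PP')$ is finite and $PP'$ is a branch data.

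Finally, the comparison relations are immediate from the definitions of the ordering and of compositum/intersection: for every closed point $x$ we have $P(x) \subseteq P(x)P'(x) = (PP')(x)$, giving $P \le PP'$ (and $P' \le PP'$ by symmetry), and $(P\cap P')(x) = P(x)\cap P'(x) \subseteq P(x)$, giving $P \cap P' \le P$ (and $\le P'$ by symmetry). I do not anticipate any genuine obstacle here; the only mild point requiring care is confirming that the intersection of two finite Galois extensions inside $\bar K$ is again Galois over $K$ — this is the standard observation that a subfield of $\bar K$ that is finite over $K$ and invariant under $\Aut(\bar K/K)$ is normal, and it is automatically separable since it sits inside the separable extension $P(x)$.
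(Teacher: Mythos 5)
Your proof is correct and follows exactly the route the paper takes: the paper's own argument is the one-line observation that intersections and composita of finite Galois extensions are finite Galois, together with $\supp(P\cap P')\subset \supp(P)\cap \supp(P')$ and $\supp(PP')\subset\supp(P)\cup\supp(P')$ (the paper in fact notes equality for the latter, but only the inclusion is needed). Your write-up just supplies the standard field-theoretic details that the paper leaves implicit.
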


\begin{proof}
It follows immediately from the fact that intersection or compositum of Galois extensions are Galois and the observation that $\supp(P\cap P')\subset \supp(P)\cap \supp(P')$ and $\supp(PP')=\supp(P)\cup\supp(P')$.  
\end{proof}

\begin{rmk}(Branch data associated to a cover)
Let $f:Y\to X$ be a finite morphism of smooth projective curves. For $x\in X$ a closed point, define $B_f(x)$ to be the compositum of Galois closures of $\cK_{Y,y}/\cK_{X,x}$ 
over all $y\in Y$ such that $f(y)=x$. The compositum is taken in a fixed algebraic closure of $\cK_{X,x}$. Since branch locus of a covering is a finite set $B_f$ is a 
branch data  with $\supp(B_f)$ same as the branch locus of $f$. 
\end{rmk}

\begin{df}
Let $f:Y\to X$ be a covering of smooth projective curves. Let $P$ be a branch data on $X$. For $y\in Y$, we define the \emph{pull-back of a branch data} as $f^*P(y)=P(f(y))\cK_{Y,y}$ as an 
extension of $\cK_{Y,y}$. Since $P(f(y))/\cK_{X,f(y)}$ is a Galois extension, so is $f^*P(y)/\cK_{Y,y}$. Moreover, $\supp(f^*P)\subset f^{-1}(\supp(P))$.
\end{df}

\begin{df}
\begin{enumerate}
\item
Let $Y$ and $X$ be smooth projective curves with branch data $Q$ and $P$ respectively. A morphism of formal orbifolds $f:(Y,Q)\to (X,P)$ is a morphism 
$f:Y\to X$ such that for all $y\in Y$ the extension $Q(y)/\cK_{X,f(y)}$ contains the extension $P(f(y))/\cK_{X,f(y)}$. \\
\item
We say that the morphism of formal orbifolds $f:(Y,Q) \to (X,P)$ is \'etale at $y$ if $Q(y)=P(f(y))$. 
And we say it is \'etale if it is \'etale for all closed point $y\in Y$.  \\
\item
A finite cover $f:Y\to X$ is said to be an essentially \'etale cover of $\scrX=(X,P)$ if for any closed point $y\in Y$ the field $\cK_{Y,y}$ viewed as an extension of $\cK_{X,f(y)}$ 
is contained in $P(f(y))$.
\end{enumerate}
\end{df}

\begin{rmk}
 A finite cover $Y\to X$ is an essentially \'etale cover of $\scrX$ then its Galois closure $\tilde Y\to X$ is also an essentially \'etale cover of $\scrX$.
\end{rmk}

\begin{rmk}\label{id-morphism}
Let $P$, $P'$ be two branch data on $X$. The identity $id_X$ defines a morphism of formal orbifolds $(X,P)\to (X,P')$ iff $P\ge P'$. And the morphism is 
\'etale iff $P=P'$.
\end{rmk}

\begin{pro}
If the $\Char(k)$ is 0 then the above definitions of formal orbifold and essentially \'etale cover agree with the definitions of parabolic curve and parabolic cover respectively in \cite{parabolic}.
\end{pro}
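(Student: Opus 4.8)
The plan is to make both definitions completely explicit when $\Char(k)=0$, using the local structure theory of smooth curves, and then to compare the resulting data term by term with the notions in \cite{parabolic}. Since $k$ is algebraically closed of characteristic zero, a choice of uniformizer at a closed point $x\in X$ gives $\hat\cO_{X,x}\cong k[[t]]$ and $\cK_{X,x}\cong k((t))$; by Puiseux's theorem the algebraic closure of $k((t))$ is $\bigcup_{n\ge 1}k((t^{1/n}))$, and every finite subextension equals $k((t^{1/n}))$ for a unique $n\ge 1$, which is automatically Galois over $k((t))$ with group $\mu_n\cong\ZZ/n\ZZ$. Therefore giving a branch data $P$ on $X$ is the same thing as giving a function $x\mapsto n_x\in\NN$ that equals $1$ outside a finite set, i.e. the combinatorial datum of a parabolic/orbifold structure on $X$ as in \cite{parabolic}. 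I would then observe that under this dictionary $\supp P$ is the set of marked points, the relation $P\le P'$ becomes $n_x\mid n'_x$ for all $x$, and $P\cap P'$, $PP'$ become the pointwise $\gcd$ and $\lcm$; so the lemma on intersections and compositums recovers the poset operations used on parabolic structures in \cite{parabolic}, and the trivial branch data $O$ corresponds to the trivial (empty) parabolic structure.

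For the statement about coverings, let $f\colon Y\to X$ be a finite cover of smooth projective curves and $y\in Y$ with $x=f(y)$ and ramification index $e_y$. Choosing compatible uniformizers $s$ at $y$ and $t$ at $x$ we have $s^{e_y}=(\mathrm{unit})\cdot t$, hence $\cK_{Y,y}\cong k((t^{1/e_y}))$ as an extension of $\cK_{X,x}=k((t))$. Consequently the inclusion $\cK_{Y,y}\subseteq P(x)=k((t^{1/n_x}))$ holds precisely when $e_y\mid n_x$. Running over all $y$ over all $x$ shows that $f$ is an essentially \'etale cover of $(X,P)$ exactly when the ramification index of $f$ over every point divides the multiplicity assigned there, which is the defining condition of a parabolic (Kawamata-type) cover of the parabolic curve attached to $(n_x)_x$ in \cite{parabolic}. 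Along the way the same local computation identifies $B_f(x)$ with $k((t^{1/\lcm_{f(y)=x}e_y}))$, i.e. the usual branch divisor, identifies $f^*P$ with the parabolic pull-back, and shows that $f$ is \'etale as a morphism of formal orbifolds iff it is ramified to exactly the prescribed order — so the auxiliary notions match as well.

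The only input that is not pure bookkeeping is the classification of the finite extensions of $k((t))$ as the fields $k((t^{1/n}))$, and this is exactly where $\Char(k)=0$ enters (absence of wild ramification together with an algebraically closed residue field); in positive characteristic $\pi_1$ of $\spec k((t))$ is enormous, which is precisely the reason the present formalism is introduced. So I do not expect a genuine obstacle, only the need to pin down the precise conventions of \cite{parabolic} — in particular whether a parabolic structure is recorded as a weighted divisor $\sum(n_x-1)x$ or $\sum n_x\,x$, and whether a parabolic cover requires the ramification to divide, or to equal, the weights — and to check that the translations above reproduce those conventions verbatim, including the degenerate cases (trivial branch data, genuinely \'etale covers).
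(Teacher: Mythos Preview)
Your proposal is correct and follows essentially the same approach as the paper: both rest on the Newton--Puiseux classification of finite extensions of $k((t))$ in characteristic zero to identify a branch datum with an assignment $x\mapsto n_x$, and then read off the essentially \'etale condition as the divisibility $e_y\mid n_x$. The paper's proof is much terser (it simply cites ``Newton's theorem'' and the fact that subextensions correspond to divisors of the degree), while you have spelled out the local computations and the auxiliary correspondences for $B_f$, $f^*P$, etc.; but there is no substantive difference in strategy.
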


\begin{proof}
If $\Char(k)=0$ then by Newton's theorem every finite extension $L/k((t))$ is a cyclic extension of the order same as the degree of the extension and conversely every 
positive integer $n$ defines a unique cyclic extension of order $n$. Hence fixing field extensions at various points of $X$ is equivalent to fixing non-negative integers. 
The definition of essentially \'etale covers also agree with parabolic covers because $L'/k((t))$ is an intermediate extension iff $[L':k((t))]$ divides $[L:k((t))]$.
\end{proof}

%\section{Parabolic covers and morphisms of parabolic curves}
\begin{pro}\label{product-parabolic}
Let $Y\to X$ and $Z\to X$ be essentially \'etale covers of $\scrX=(X,P)$. Then  the normalization of the fiber product $\widetilde{Y\times_X Z}\to X$ is an essentially \'etale cover of $\scrX$.
\end{pro}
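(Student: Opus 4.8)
The plan is to check the essentially étale condition pointwise, working in the completed local fields. Fix a closed point $x\in X$ and let $w$ be a closed point of $W:=\widetilde{Y\times_X Z}$ lying over $x$; write $g:W\to X$ for the composite map. The key observation is that $\cK_{W,w}$, viewed as an extension of $\cK_{X,x}$, is contained in the compositum $\cK_{Y,y}\cK_{Z,z}$ inside a fixed algebraic closure of $\cK_{X,x}$, where $y$ and $z$ are the images of $w$ in $Y$ and $Z$ respectively. Indeed, a point of the normalized fiber product over $x$ corresponds to a branch of $\spec(\hat\cO_{Y,y}\otimes_{\hat\cO_{X,x}}\hat\cO_{Z,z})$, and its local ring embeds into the compositum of the two local rings in a common algebraic closure; passing to fraction fields gives $\cK_{W,w}\subset \cK_{Y,y}\cK_{Z,z}$ over $\cK_{X,x}$.

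Granting this, the argument concludes quickly. Since $Y\to X$ is essentially étale for $(X,P)$, we have $\cK_{Y,y}\subset P(x)$ as extensions of $\cK_{X,x}$; likewise $\cK_{Z,z}\subset P(x)$. But $P(x)/\cK_{X,x}$ is a (Galois, hence in particular a) field extension, so it is closed under taking compositums of subextensions: $\cK_{Y,y}\cK_{Z,z}\subset P(x)$. Combining with the containment from the previous paragraph yields $\cK_{W,w}\subset P(x)$ as an extension of $\cK_{X,g(w)}=\cK_{X,x}$, which is exactly the condition for $g:W\to X$ to be an essentially étale cover of $\scrX=(X,P)$. Since $x$ and $w$ were arbitrary, this finishes the proof.

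The main obstacle is the geometric input of the first paragraph: identifying the completed local rings of the normalization of the fiber product. I would justify it by noting that $\hat\cO_{W,w}$ is one of the local factors of the normalization of the complete local ring $\hat\cO_{Y,y}\otimes_{\hat\cO_{X,x}}\hat\cO_{Z,z}$ (using that normalization commutes with completion for excellent rings, and that completion commutes with the relevant localizations), and that each such factor, being a normal domain finite over $\hat\cO_{X,x}$ with fraction field generated by the images of $\cK_{Y,y}$ and $\cK_{Z,z}$, embeds $\cK_{W,w}$ into $\cK_{Y,y}\cK_{Z,z}$. One subtlety worth a remark is that the fiber product need not be connected (as the paper already allows for formal orbifolds), so $W$ may be a disjoint union of several curves; the pointwise argument above is insensitive to this, since it only ever looks at one component at a time through a single point $w$.
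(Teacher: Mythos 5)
Your proof is correct and takes essentially the same route as the paper: the paper's entire argument is to reduce to the Galois case and cite \cite[Theorem 3.4]{killwildram} for the identification $\cK_{W,w}=\cK_{Y,y}\cK_{Z,z}$ over $\cK_{X,x}$, then conclude from the fact that both $\cK_{Y,y}$ and $\cK_{Z,z}$ lie in the field $P(x)$. Your first paragraph simply supplies a direct justification of (the containment half of) that cited local statement, which is sound since the relevant local extensions are separable, being contained in the Galois extension $P(x)/\cK_{X,x}$.
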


\begin{proof}
We may assume $Y\to X$ and $Z\to X$ are Galois. Let $W$ be any component of $\widetilde{Y\times_X Z}$. Let $\tau=(y,z)\in W$ be a point lying above $x$. Then $y\in Y$ and 
$z\in Z$ lie above $x$. It follows from \cite[Theorem 3.4]{killwildram} that $\cK_{W,\tau}\subset P(x)$.
\end{proof}

\begin{cor}\label{inverse-system}
The set of essentially \'etale covers of a formal orbifold $\scrX$ form an inverse system.
\end{cor}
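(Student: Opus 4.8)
The plan is to verify that the collection of essentially \'etale covers of $\scrX=(X,P)$, equipped with the partial order given by domination (i.e.\ $Y' \ge Y$ if there is a morphism $Y'\to Y$ over $X$ compatible with the covers), is a directed set. Reflexivity and transitivity of this order are immediate, so the only thing to check is that any two essentially \'etale covers $Y\to X$ and $Z\to X$ admit a common upper bound that is itself an essentially \'etale cover of $\scrX$. First I would take any connected component $W$ of the normalized fiber product $\widetilde{Y\times_X Z}$. By Proposition \ref{product-parabolic}, the induced map $W\to X$ (a component of $\widetilde{Y\times_X Z}\to X$) is an essentially \'etale cover of $\scrX$, so $W$ is a legitimate object of our system.

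Next I would observe that the two projection morphisms $\widetilde{Y\times_X Z}\to Y$ and $\widetilde{Y\times_X Z}\to Z$ restrict to morphisms $W\to Y$ and $W\to Z$ over $X$ (one must check $W$ surjects onto $Y$ and onto $Z$; since $Y\times_X Z\to Y$ is finite and surjective and $W$ maps onto some component, connectedness of $Y$ forces surjectivity, and similarly for $Z$). These realize $W$ as a cover dominating both $Y$ and $Z$. Hence $W\ge Y$ and $W\ge Z$ in the partial order, and $W$ lies in the system, so the system is directed. Combined with the trivial cover $X\to X$ serving as a least element (or simply as one element to anchor the order), this shows the essentially \'etale covers of $\scrX$ form an inverse system.

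The only mild subtlety — and the place I would be most careful — is bookkeeping about connectedness and surjectivity: $Y\times_X Z$ need not be connected (this is exactly why the definition of formal orbifold allows disconnected objects and why Proposition \ref{product-parabolic} passes to a single component $W$), so one has to make sure the chosen component $W$ still dominates both $Y$ and $Z$ rather than mapping into a proper closed subscheme. Since all curves here are irreducible (smooth projective) and the fiber product maps are finite, any component $W$ dominates $Y$ and $Z$, so this is not a genuine obstacle; it is just the point that needs a sentence of justification. Everything else follows formally from Proposition \ref{product-parabolic}.
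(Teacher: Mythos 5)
Your proposal is correct and follows exactly the route the paper takes: the paper's entire proof is the one-line observation that Proposition \ref{product-parabolic} makes the partially ordered set directed, and you have simply filled in the routine details (passing to a component of the normalized fiber product and checking it dominates both factors). The surjectivity point you flag is handled correctly, since a component of the normalized fiber product is finite over the irreducible curve $X$ and hence surjects onto $Y$ and $Z$.
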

 
\begin{proof}
 Proposition \ref{product-parabolic} implies that the underlying partially ordered set is a directed set.
\end{proof}

\begin{lemma}\label{parabolic-etale}
Let $(X,P)$ be a formal orbifold and $f:Y\to X$ be a covering. Then for any branch data $Q$ on $Y$, $f$ defines a morphism of formal orbifolds $f:(Y,Q)\to (X,P)$ iff $Q\ge f^*P$. Moreover, $f:(Y,f^*P) \to (X,P)$ is an \'etale morphism of formal orbifolds iff $f:Y\to X$ is a essentially \'etale cover of $(X,P)$.
\end{lemma}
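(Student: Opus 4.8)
The plan is to unwind the definitions on both statements and reduce everything to purely local (point-by-point) conditions, since both ``morphism of formal orbifolds'' and ``essentially \'etale cover'' are defined pointwise over the closed points of $X$. First I would handle the first assertion. Fix $y \in Y$ and set $x = f(y)$. By definition, $f:(Y,Q)\to(X,P)$ is a morphism of formal orbifolds precisely when, for every such $y$, the extension $P(x)/\cK_{X,x}$ is contained in $Q(y)/\cK_{X,x}$ inside a fixed algebraic closure of $\cK_{X,x}$; here $Q(y)$ is viewed as an extension of $\cK_{X,x}$ through the tower $\cK_{X,x}\subset \cK_{Y,y}\subset Q(y)$, the first inclusion coming from $f$. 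On the other hand, $f^*P(y) = P(x)\cK_{Y,y}$ is by construction the compositum of $P(x)$ and $\cK_{Y,y}$ over $\cK_{X,x}$, so $P(x)\subseteq Q(y)$ (as subfields of the chosen algebraic closure) is equivalent to $P(x)\cK_{Y,y}\subseteq Q(y)$, i.e. $f^*P(y)\subseteq Q(y)$. Quantifying over all $y$ gives $Q\ge f^*P$, which is the claimed equivalence. I should be a little careful that the chosen algebraic closures of $\cK_{X,x}$ and of $\cK_{Y,y}$ are compatible so that these compositum and containment statements make sense simultaneously; this is the standard convention already implicit in the definitions of $f^*P$ and of a morphism of formal orbifolds, so I would just remark on it.

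For the second assertion, apply the definition of \'etale morphism: $f:(Y,f^*P)\to(X,P)$ is \'etale iff for every closed point $y\in Y$ we have $f^*P(y) = P(f(y))$, i.e. $P(x)\cK_{Y,y} = P(x)$ as extensions of $\cK_{X,x}$. Since $P(x)\subseteq P(x)\cK_{Y,y}$ always, this equality holds iff $\cK_{Y,y}\subseteq P(x)$ as subextensions of $\cK_{X,x}$. Quantifying over all closed points $y\in Y$, the latter condition is exactly the definition of $f:Y\to X$ being an essentially \'etale cover of $(X,P)$. One should also observe that $f^*P$ is indeed a branch data on $Y$, so that $(Y,f^*P)$ is a legitimate formal orbifold and the statement is not vacuous; this was already recorded in the definition of pull-back (finiteness of $\supp(f^*P)$ follows from $\supp(f^*P)\subseteq f^{-1}(\supp P)$ and finiteness of the fibres of $f$).

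I do not anticipate a serious obstacle here: the lemma is essentially a bookkeeping exercise translating the global statements into their pointwise meanings and then using the elementary fact that for field extensions, containment $A\subseteq B$ over a common base is equivalent to $AC\subseteq B$ whenever $C\subseteq B$, and to $AC = A$ when additionally we want the compositum to collapse. The only point demanding a sentence of care is the compatibility of the various fixed algebraic closures (of $\cK_{X,x}$, of $\cK_{Y,y}$, and the one in which compositums are formed), so that ``$P(x)\cK_{Y,y}$'' and ``$Q(y)$ contains $P(x)$'' refer to subfields of one and the same overfield; once that is fixed, both equivalences are immediate.
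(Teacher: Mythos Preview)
Your proposal is correct and follows essentially the same approach as the paper: both arguments unwind the definitions pointwise, use that $Q(y)\supset P(f(y))$ iff $Q(y)\supset P(f(y))\cK_{Y,y}=(f^*P)(y)$ (since $\cK_{Y,y}\subset Q(y)$), and then observe for the second part that $(f^*P)(y)=P(f(y))$ amounts to $\cK_{Y,y}\subset P(f(y))$. Your extra remarks on compatibility of algebraic closures and on $f^*P$ being a branch data are reasonable but not needed beyond what the paper already records.
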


\begin{proof}
 Note that $(f^*P)(y)=P(f(y))\cK_{Y,y}$. So for a branch data $Q$ on $Y$ and $y\in Y$, $Q(y) \supset P(f(y))$ iff $Q(y) \supset P(f(y))\cK_{Y,y}=(f^*P)(y)$. Hence $f:(Y,Q)\to (X,P)$ is a morphism iff $Q\ge f^*P$. 
 
 Moreover $f:(Y,f^*P)\to (X,P)$ is an \'etale morphism iff $(f^*P)(y)=P(f(y))$ for all $y\in Y$, i.e. $\cK_{Y,y}\subset P(f(y))$ for all $y\in Y$. But this is the definition of $f:Y\to X$ to be essentially \'etale cover of $(X,P)$.
\end{proof}

\begin{cor} \label{galois-covers-R_f-parabolic-etale}
 Let $f:Y\to X$ be a finite Galois cover of smooth projective curves. Then $f:(Y,O)\to (X,B_f)$ is an \'etale morphism of formal orbifolds.
\end{cor}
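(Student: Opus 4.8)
The plan is to derive this from Lemma \ref{parabolic-etale} applied with $P=B_f$, so the only real work is identifying the pullback $f^*B_f$ with the trivial branch data $O$.

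First I would note that $f\colon Y\to X$ is an essentially \'etale cover of $\scrX=(X,B_f)$: for any closed point $y\in Y$ lying over $x=f(y)$, the extension $\cK_{Y,y}/\cK_{X,x}$ is contained in its own Galois closure, and that Galois closure is one of the fields whose compositum (inside a fixed algebraic closure of $\cK_{X,x}$) defines $B_f(x)$; hence $\cK_{Y,y}\subseteq B_f(x)$, which is exactly the condition that $f$ be an essentially \'etale cover of $(X,B_f)$. By Lemma \ref{parabolic-etale} this already gives that $f\colon (Y,f^*B_f)\to (X,B_f)$ is an \'etale morphism of formal orbifolds, so it remains to prove $f^*B_f=O$, i.e. $(f^*B_f)(y)=\cK_{Y,y}$ for every $y$.

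For this step the hypothesis that $f$ is Galois is essential. Writing $G=\Aut(Y/X)$ and letting $D_y\le G$ be the decomposition group at $y$, the completion extension $\cK_{Y,y}/\cK_{X,x}$ is itself Galois, with group $D_y$ (since $k$ is algebraically closed there is no residue extension, so $[\cK_{Y,y}:\cK_{X,x}]=|D_y|$ and $D_y$ acts faithfully and $\cK_{X,x}$-linearly on $\cK_{Y,y}$). In particular it is its own Galois closure; moreover the fields $\cK_{Y,y'}$ for the various $y'$ over $x$ are pairwise $\cK_{X,x}$-isomorphic, being conjugate under the transitive $G$-action on the fibre. Since a finite Galois extension has a unique image under any embedding into a fixed algebraic closure, all these conjugates coincide as subfields of an algebraic closure of $\cK_{X,x}$, so $B_f(x)$ equals a single such copy and is, as an extension of $\cK_{X,x}$, isomorphic to $\cK_{Y,y}$. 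Now $(f^*B_f)(y)=B_f(x)\cdot\cK_{Y,y}$ is formed inside an algebraic closure of $\cK_{Y,y}$; since $B_f(x)\cong\cK_{Y,y}$ over $\cK_{X,x}$ and $\cK_{Y,y}/\cK_{X,x}$ is Galois, the image of $B_f(x)$ under any $\cK_{X,x}$-embedding into that closure is again $\cK_{Y,y}$, whence $(f^*B_f)(y)=\cK_{Y,y}$. Thus $f^*B_f=O$ and $f\colon (Y,O)\to (X,B_f)$ is \'etale.

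The one point needing care is the bookkeeping of compositums taken inside different algebraic closures (that of $\cK_{X,x}$ in the definition of $B_f$, and that of $\cK_{Y,y}$ in the definition of $f^*B_f$); I expect this to be the main, though minor, obstacle, and it is resolved purely formally by the fact that a finite Galois subextension of any fixed algebraic closure is uniquely determined, which both forces the collapse of the conjugate copies and makes the final compositum computation independent of all choices. I would also remark that Galois-ness cannot be dropped: for a non-Galois $f$ the branch data $B_f$ is genuinely larger than what $f$ ``sees'', and $f^*B_f$ need not be trivial.
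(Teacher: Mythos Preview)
Your proof is correct and follows essentially the same route as the paper's: both use the Galois hypothesis to conclude that $\cK_{Y,y}/\cK_{X,x}$ is already Galois and independent of the choice of $y$ over $x$, whence $B_f(x)=\cK_{Y,y}$, $f^*B_f=O$, and Lemma~\ref{parabolic-etale} finishes. Your version spells out more carefully the decomposition-group justification and the bookkeeping of algebraic closures, but the argument is the same.
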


\begin{proof}
 Let $x\in X$ be a closed point and $y\in f^{-1}(x)$. Since $f:Y\to X$ is a Galois cover, the extension $\cK_{Y,y}/\cK_{X,x}$ is a Galois extension which is independent of the choice of $y\in f^{-1}(x)$. Hence $B_f(x)=\cK_{Y,y}$. By definition $f:Y\to X$ is an essentially \'etale cover of $(X,B_f)$ and $f^*B_f$ is the trivial branch data. Hence by above lemma $f:(Y,O)\to (X,B_f)$ is \'etale.
\end{proof}

The next proposition shows that the fiber products of formal orbifolds exist. Note that here by fibre product we mean the normalized fibre product. In fact it is easy to check that normalized fibre product
satisfies the universal property of the fibre product among orbifolds. 

\begin{pro}\label{fiber-product}
Let $f:(Y,Q)\to (X,P)$ and $g:(Z,R)\to (X,P)$ be morphism of formal orbifolds. Let $W$ be the normalization of $Y\times_X Z$  
and $F(w)=Q(y)R(z)/\cK_{W,w}$ where $w=(y,z)\in W$. Then $(W,F)$ is the fiber product in the category of formal orbifolds.
\end{pro}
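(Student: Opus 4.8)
The plan is to verify, in order: (i) that $F$ is a branch data on $W$; (ii) that the two projections $p_Y\colon (W,F)\to(Y,Q)$ and $p_Z\colon(W,F)\to(Z,R)$ are morphisms of formal orbifolds whose composites to $(X,P)$ agree; and (iii) that $(W,F)$ together with these projections solves the universal problem. The piece of bookkeeping underlying everything is the following. The (finite) projections $W\to Y$ and $W\to Z$ give inclusions $\cK_{Y,y}\subset\cK_{W,w}$ and $\cK_{Z,z}\subset\cK_{W,w}$ for $w=(y,z)$, and after fixing an algebraic closure $\overline{\cK_{W,w}}$ we may view $Q(y)\subset\overline{\cK_{Y,y}}$ and $R(z)\subset\overline{\cK_{Z,z}}$ as subfields of $\overline{\cK_{W,w}}$ via embeddings extending these inclusions; since $Q(y)/\cK_{Y,y}$ and $R(z)/\cK_{Z,z}$ are Galois, the resulting subfields are independent of the chosen embeddings, so $F(w)=Q(y)R(z)\cK_{W,w}$ is a well-defined subfield of $\overline{\cK_{W,w}}$. (Because completion commutes with the finite morphisms $Y\times_X Z\to X$, one checks $\cK_{W,w}=\cK_{Y,y}\cK_{Z,z}$ inside $\overline{\cK_{W,w}}$, which is why one may write $F(w)=Q(y)R(z)/\cK_{W,w}$; this identity is not needed below.)

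For (i): since compositum with a Galois extension is Galois, $Q(y)\cK_{W,w}$ and $R(z)\cK_{W,w}$ are Galois over $\cK_{W,w}$, hence so is their compositum $F(w)$, so $F(w)/\cK_{W,w}$ is finite Galois. If $Q(y)=\cK_{Y,y}$ and $R(z)=\cK_{Z,z}$ then $F(w)=\cK_{Y,y}\cK_{Z,z}\cK_{W,w}=\cK_{W,w}$, so $\supp(F)\subset p_Y^{-1}(\supp Q)\cup p_Z^{-1}(\supp R)$, a finite set, and $F$ is a branch data. For (ii): the inclusions $F(w)\supset Q(y)$ and $F(w)\supset R(z)$ say precisely that $p_Y$ and $p_Z$ are morphisms of formal orbifolds; $f\circ p_Y=g\circ p_Z$ holds already as maps of curves since $W$ maps to $Y\times_X Z$; and $F(w)\supset Q(y)\supset P(f(y))=P(x)$ (using that $f$ is an orbifold morphism) shows the common composite $(W,F)\to(X,P)$ is a morphism of formal orbifolds.

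For (iii), let $(T,S)$ be a formal orbifold with orbifold morphisms $a\colon(T,S)\to(Y,Q)$ and $b\colon(T,S)\to(Z,R)$ satisfying $f\circ a=g\circ b$. The underlying maps $a,b$ are finite morphisms of curves and induce a finite morphism $T\to Y\times_X Z$ which, on each connected component of $T$, is dominant onto a component of $Y\times_X Z$; since each component of $T$ is a normal integral curve, the universal property of normalization gives a unique morphism of curves $h\colon T\to W$ with $p_Y\circ h=a$ and $p_Z\circ h=b$. It remains to see that $h$ is an orbifold morphism $(T,S)\to(W,F)$, i.e. $S(t)\supset F(w)$ for $t\in T$, $w=h(t)=(a(t),b(t))$. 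Now $h$ a map of curves gives $\cK_{W,w}\subset\cK_{T,t}\subset S(t)$; $a$ an orbifold morphism gives $S(t)\supset Q(a(t))=Q(y)$; $b$ an orbifold morphism gives $S(t)\supset R(z)$; hence $S(t)\supset Q(y)R(z)\cK_{W,w}=F(w)$ (all read inside $\overline{\cK_{T,t}}=\overline{\cK_{W,w}}$, the Galois hypotheses ensuring the copies match). Uniqueness of $h$ as an orbifold morphism is inherited from its uniqueness as a map of curves, completing the verification of the universal property.

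I expect the work here to be essentially all bookkeeping rather than a conceptual obstacle: one must be disciplined about fixing compatible algebraic closures and embeddings so that every ``$\supset$'' is literally an inclusion of subfields of a single ambient field, and one must keep in mind that the normalized fibre product can be disconnected, so the universal-property argument is run one component of $T$ (and of $W$) at a time. The only genuinely nontrivial external input is the universal property of normalization used to produce $h$ at the level of curves, which rests on $T$ being normal and on a nonconstant morphism of smooth projective curves being finite.
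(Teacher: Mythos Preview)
Your proof is correct and follows essentially the same approach as the paper's: well-definedness of $F$ via the Galois hypotheses (the paper cites \cite[Theorem~3.4]{killwildram} for $\cK_{W,w}=\cK_{Y,y}\cK_{Z,z}$, which you note but do not need), checking the projections are orbifold morphisms, and deducing the universal property from that of the usual fibre product together with normalization. The paper's proof is a brief sketch of exactly this outline, and you have simply filled in the details more carefully.
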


\begin{proof}
Note that $\cK_{Y,y}\cK_{Z,z}=\cK_{W,w}$ as extension of $\cK_{X,x}$ by \cite[Theorem 3.4]{killwildram}. The compositum $Q(y)R(z)$ can be taken in any fixed algebraic closure of $\cK_{W,w}$. As $Q(y)/\cK_{Y,y}$ and $R(z)/\cK_{Z,z}$ are Galois $F(w)/\cK_{W,w}$ is also Galois and the hence the extension is independent of the choice of the algebraic closure.  

One easily checks that projection maps are morphisms of formal orbifolds. Finally, $(W,F)$ satisfies the universal property for fiber product follows from the universal property of the usual fiber product and the universal property the normalization.
\end{proof}

\begin{example}
Let $X$ be a smooth projective curve and $P,P'$ be branch data on $X$. Let $B$ be any branch data on $X$ such that  $B\le P\cap P'$. Then $(X,P)\times_{(X,B)}(X,P')=(X,PP')$. 
\end{example}

\begin{pro}\label{etale-pullback}
Pullback of \'etale morphism of formal orbifolds is \'etale. 
\end{pro}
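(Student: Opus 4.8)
The plan is a direct unwinding of definitions. Let $f:(Y,Q)\to(X,P)$ be an \'etale morphism of formal orbifolds and let $g:(Z,R)\to(X,P)$ be an arbitrary morphism; I want to show that the projection $\pi:(W,F)\to(Z,R)$ from the fiber product of Proposition \ref{fiber-product} is \'etale. Recall that $W$ is the normalization of $Y\times_X Z$ and that, for a closed point $w=(y,z)\in W$ lying over $x:=f(y)=g(z)$, one has $F(w)=Q(y)R(z)/\cK_{W,w}$. So the goal is to verify $F(w)=R(\pi(w))=R(z)$ for every such $w$.

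The key step is a field-theoretic triviality. Since $f$ is \'etale at $y$, $Q(y)=P(x)$ as extensions of $\cK_{Y,y}$; in particular $\cK_{Y,y}\subseteq P(x)$. Since $g$ is a morphism of formal orbifolds, $R(z)/\cK_{X,x}$ contains $P(x)/\cK_{X,x}$, i.e.\ $P(x)\subseteq R(z)$. Hence
\[
F(w)=Q(y)R(z)=P(x)R(z)=R(z),
\]
the last equality because the compositum of $R(z)$ with a subfield of itself is $R(z)$. Thus $\pi$ is \'etale at $w$; letting $w$ range over all closed points of $W$ shows $\pi$ is \'etale.

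I do not expect a genuine obstacle: the statement is a definition chase. The one place the earlier work enters is that Proposition \ref{fiber-product} already identifies $F(w)$ with the compositum $Q(y)R(z)$ taken inside a fixed algebraic closure of $\cK_{W,w}$ and guarantees that it is Galois over $\cK_{W,w}$, so the equality $F(w)=R(z)$ is meaningful with no further appeal to \cite[Theorem 3.4]{killwildram}. Alternatively, one can package the conclusion through Lemma \ref{parabolic-etale}: since $\cK_{Y,y}\subseteq P(x)\subseteq R(z)$ and $\cK_{Z,z}\subseteq R(z)$, one gets $\cK_{W,w}=\cK_{Y,y}\cK_{Z,z}\subseteq R(z)$, so $\pi:W\to Z$ is an essentially \'etale cover of $(Z,R)$ and $F=\pi^{*}R$, which is exactly the assertion that $\pi$ is an \'etale morphism of formal orbifolds.
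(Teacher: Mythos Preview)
Your proof is correct and follows essentially the same route as the paper: the paper's proof merely cites Lemma \ref{parabolic-etale} and the description of $F$ in Proposition \ref{fiber-product}, and your argument is precisely the unwinding of that citation, with your ``alternative'' paragraph being exactly the paper's intended application of Lemma \ref{parabolic-etale}. The direct verification $F(w)=Q(y)R(z)=P(x)R(z)=R(z)$ is the content behind the paper's one-line proof.
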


\begin{proof}
Let $\scrY\to \scrX$ be an \'etale morphism and $\scrZ\to \scrX$ be a morphism of formal orbifolds. Let $\scrW$ be the fiber product and $\scrW\to \scrZ$ be 
the projection map. The result now follows from Lemma \ref{parabolic-etale} and the definition of the branch data on $\scrW$ as described in Proposition \ref{fiber-product}.
\end{proof}

\subsection{Hurwitz formula}

We will define the genus of the formal orbifold $(X,P)$ using the noion of different (\cite{Serre-local.fields}) and obtain a Riemann-Hurwitz formula.

Let $L/k((t))$ be a finite extension. Let $\scrD(L/k((t)))$ be the different of the extension $L/k((t))$ and let 
$\degram(L/k((t)))=v_L(\scrD(L/k((t))))$ where $v_L$ is the valuation of $L$ (with the value of the uniformizer of $L$ being 1). For a Galois extension 
$L/k((t))$ with Galois group $G$, let $G=G_0\ge G_1\ge G_2 \ldots$ be the lower ramification filtration. Recall that by Hilbert's different formula (\cite[Chapter 4, \S, Proposition 4]{Serre-local.fields})  
$$\degram(L/k((t)))=\sum_{i=0}^{\infty}(|G_i|-1)$$

\begin{df}
For a formal orbifold $(X,P)$ define its genus to be $$g(X,P)=g(X)+\frac{1}{2}\sum_{x\in \supp(P)}\frac{\degram(P(x)/\cK_{X,x})}{[P(x):\cK_{X,x}]}$$
\end{df}

Note that if $O$ is the trivial branch data on $X$ then $g(X,O)=g(X)$.

\begin{df}
Let $f:(Y,Q)\to (X,P)$ be a morphism of formal orbifolds. For $y\in Y$ let $R(y)$ be the field extension $Q(y)/P(f(y))$ (by definition of morphism of formal orbifolds 
$Q(y)\supset P(f(y))$). The function $R$ is called the ramification data of $f$. Note that $f$ is \'etale at $y\in Y$ iff $R(y)$ is the trivial extension. 
The ramification divisor of $f$ is given by $D=\sum_{y\in Y}\frac{\degram(Q(y)/P(f(y)))}{[Q(y):\cK_{Y,y}]}y$. Note that $D$ is a $\QQ$-divisor. 
\end{df} 

\begin{lemma}
 Let $K\subset L \subset M$ be finite extension of local fields and $v_K$, $v_L$ and $v_M$ denote their valuations. Then $\degram(M/K)=\degram(M/L)+\degram(L/K)[M:L]$.
\end{lemma}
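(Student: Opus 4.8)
The plan is to reduce the statement to the transitivity of the different in a tower of local fields. I am in the situation where $K$, $L$, $M$ all have algebraically closed residue field $k$ (this is the only case that occurs, since the fields $\cK_{X,x}$ arise from a curve over $k$) and the extensions in sight are separable (in the applications they are in fact Galois), so that the differents are honest ideals and the transitivity formula of \cite{Serre-local.fields} applies.

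First I would recall the multiplicativity of the different as (fractional) ideals of $\cO_M$ for the tower $K\subset L\subset M$, namely $\scrD(M/K)=\scrD(M/L)\cdot\bigl(\scrD(L/K)\cO_M\bigr)$. Applying the valuation $v_M$, which is additive on products of fractional ideals of the discrete valuation ring $\cO_M$, gives $v_M(\scrD(M/K))=v_M(\scrD(M/L))+v_M(\scrD(L/K)\cO_M)$. The first two terms are $\degram(M/K)$ and $\degram(M/L)$ by definition.

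For the last term, write $\scrD(L/K)=\cmm_L^{a}$ with $a=v_L(\scrD(L/K))=\degram(L/K)$, where $\cmm_L$ is the maximal ideal of $\cO_L$. Extension of ideals then gives $\scrD(L/K)\cO_M=\cmm_L^{a}\cO_M=\cmm_M^{a\,e(M/L)}$, with $e(M/L)$ the ramification index, so that $v_M(\scrD(L/K)\cO_M)=e(M/L)\,\degram(L/K)$. Since the residue field of $L$ is $k$, which is algebraically closed, the residue extension of $M/L$ is trivial and hence $e(M/L)=[M:L]$. Combining the three identities yields $\degram(M/K)=\degram(M/L)+\degram(L/K)[M:L]$, as claimed.

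I do not anticipate any real difficulty here: the transitivity of the different is standard, and the only point worth flagging is the step $e(M/L)=[M:L]$, which is precisely where the algebraically closed residue field is used — for an arbitrary tower of local fields the formula would read with $e(M/L)$ in place of $[M:L]$.
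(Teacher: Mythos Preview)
Your proof is correct and follows essentially the same route as the paper: both invoke the transitivity of the different from \cite[Chapter III, \S4, Proposition 8]{Serre-local.fields} and then pass to $v_M$, using that $v_M$ restricted to $L$ is $[M:L]\cdot v_L$ (the paper states this directly as $v_M(a)=v_L(a)[M:L]$, whereas you unpack it via $\scrD(L/K)=\cmm_L^a$ and $e(M/L)=[M:L]$). Your remark that the identity $e(M/L)=[M:L]$ is where the algebraically closed residue field enters is a useful clarification that the paper leaves implicit.
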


\begin{proof}
 Let $\cD(M/K)$ be the different of the local field extension. By \cite[Chapter III, \$4, Proposition 8]{Serre-local.fields} $\cD(M/K)=\cD(M/L)\cD(L/K)$. Now the formula follows from the definition of 
$\degram(M/K)=v_M(\cD(M/K))$ and the fact that $v_M(a)=v_L(a)[M:L]$ for $a\in L$.
\end{proof}

\begin{thm}(Riemann-Hurwitz Formula for formal orbifolds)\label{parabolic-RH}
Let $f:(Y,Q)\to (X,P)$ be a morphism of formal orbifolds of degree $d$. Then $2g(Y,Q)-2=d(2g(X,P)-1)+deg(D)$ where $D$ is the  ramification divisor of $f$. 
\end{thm}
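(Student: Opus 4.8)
The plan is to reduce the statement to the classical Riemann--Hurwitz formula for the underlying cover $f:Y\to X$ (in its positive-characteristic form, stated with the different) together with the additivity of $\degram$ in towers of local fields established in the preceding lemma. Write $R_f=\sum_{y\in Y}\degram(\cK_{Y,y}/\cK_{X,f(y)})\,y$ for the usual ramification divisor of $f$; its degree $\deg R_f=\sum_{y}\degram(\cK_{Y,y}/\cK_{X,f(y)})$ is the length of the different sheaf, so classical Riemann--Hurwitz reads $2g(Y)-2=d(2g(X)-2)+\deg R_f$. All the field extensions that occur below are separable (branch data are Galois by definition, and coverings are taken to be separable), so the different, Hilbert's formula and the tower lemma all apply.

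First I would work locally. Fix a closed point $y\in Y$ and set $x=f(y)$. Inside a fixed algebraic closure of $\cK_{X,x}$ there are two chains of finite separable extensions with the same bottom $\cK_{X,x}$ and the same top $Q(y)$, namely
$$\cK_{X,x}\subseteq \cK_{Y,y}\subseteq Q(y)\qquad\text{and}\qquad \cK_{X,x}\subseteq P(x)\subseteq Q(y),$$
the first because $f$ is a cover and the second because $f:(Y,Q)\to(X,P)$ is a morphism of formal orbifolds (so $P(x)\subseteq Q(y)$) together with $\cK_{Y,y}\subseteq Q(y)$. Applying the tower formula $\degram(M/K)=\degram(M/L)+\degram(L/K)[M:L]$ to each chain and equating the two expressions obtained for $\degram(Q(y)/\cK_{X,x})$ gives
$$\degram(Q(y)/\cK_{Y,y})+\degram(\cK_{Y,y}/\cK_{X,x})[Q(y):\cK_{Y,y}]=\degram(Q(y)/P(x))+\degram(P(x)/\cK_{X,x})[Q(y):P(x)].$$
Dividing by $[Q(y):\cK_{Y,y}]$ and using $[Q(y):P(x)]/[Q(y):\cK_{Y,y}]=[\cK_{Y,y}:\cK_{X,x}]/[P(x):\cK_{X,x}]$ turns this into
$$\frac{\degram(Q(y)/\cK_{Y,y})}{[Q(y):\cK_{Y,y}]}+\degram(\cK_{Y,y}/\cK_{X,x})=\frac{\degram(Q(y)/P(x))}{[Q(y):\cK_{Y,y}]}+\frac{\degram(P(x)/\cK_{X,x})}{[P(x):\cK_{X,x}]}\,[\cK_{Y,y}:\cK_{X,x}].$$

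Next I would sum this identity over all closed points $y\in Y$ (only finitely many terms are nonzero). On the left, the first sum is $2\bigl(g(Y,Q)-g(Y)\bigr)$ by the definition of the orbifold genus, and the second is $\deg R_f$. On the right, the first sum is $\deg D$ by the definition of the ramification divisor $D$. For the last sum I group the points $y$ by their image $x=f(y)$; since all residue fields equal $k$, the fundamental identity for the separable cover $f$ gives $\sum_{f(y)=x}[\cK_{Y,y}:\cK_{X,x}]=d$, so this sum equals $d\sum_{x\in\supp(P)}\degram(P(x)/\cK_{X,x})/[P(x):\cK_{X,x}]=2d\bigl(g(X,P)-g(X)\bigr)$. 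Hence
$$2\bigl(g(Y,Q)-g(Y)\bigr)+\deg R_f=\deg D+2d\bigl(g(X,P)-g(X)\bigr),$$
and substituting $\deg R_f=2g(Y)-2-d(2g(X)-2)$ from classical Riemann--Hurwitz and rearranging yields the asserted formula.

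The computation is essentially bookkeeping once the two towers are in place; the one point needing care is that $Q(y)$ is in general strictly larger than the compositum $\cK_{Y,y}P(x)$, so $\degram(Q(y)/P(x))$ cannot be rewritten in terms of data living purely on $Y$. The reason the argument still closes is precisely that, after dividing by $[Q(y):\cK_{Y,y}]$, the unknown quantity $\degram(Q(y)/P(x))/[Q(y):\cK_{Y,y}]$ is exactly the local contribution to $\deg D$, so it is never evaluated. The other thing to verify carefully is the separability of each extension in sight, which, as noted, follows from the definitions.
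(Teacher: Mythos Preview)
Your proof is correct and is essentially the same argument as the paper's: both reduce to the classical Riemann--Hurwitz formula and use the tower lemma on the two chains $\cK_{X,x}\subset \cK_{Y,y}\subset Q(y)$ and $\cK_{X,x}\subset P(x)\subset Q(y)$, together with $\sum_{f(y)=x}[\cK_{Y,y}:\cK_{X,x}]=d$. The only difference is cosmetic---you first establish the local identity at each $y$ and then sum, whereas the paper sums each side separately and shows both equal $\sum_{y}\degram(Q(y)/\cK_{X,f(y)})/[Q(y):\cK_{Y,y}]$; the computations are identical, and your version (like the paper's) yields $2g(Y,Q)-2=d(2g(X,P)-2)+\deg D$, the ``$-1$'' in the statement being a typo.
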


\begin{proof}
This essentially follows from the classical Riemann-Hurwitz formula and the above lemma.
By Riemann-Hurwitz formula, $2g(Y)-2-d(2g(X)-2)=\sum_{y\in Y}\degram(\cK_{Y,y}/\cK_{X,f(y)})$.
So it is enough to show that
$$\sum_{y\in Y}\left(\degram(\cK_{Y,y}/\cK_{X,f(y)})+\frac{\degram(Q(y)/\cK_{Y,y})}{[Q(y):\cK_{Y,y}]}\right)=d\sum_{x\in X}\frac{\degram(P(x)/\cK_{X,x})}{[P(x):\cK_{X,x}]}+\deg(D)$$
But by the above lemma,
\begin{align*}
  LHS =& \sum_{y\in Y}\frac{\degram(\cK_{Y,y}/\cK_{X,f(y)})[Q(y):\cK_{Y,y}]+\degram(Q(y)/\cK_{Y,y})}{[Q(y):\cK_{Y,y}]}\\
      =& \sum_{y\in Y}\frac{\degram(Q(y)/\cK_{X,f(y)})}{[Q(y):\cK_{Y,y}]}
\end{align*}
Also since there is no residue field extension, for any $x\in X$ we have the formula $d=\sum_{y\in Y; f(y)=x}e_y$, where $e_y$ is the ramification index of $f$ at $y$. 
Also note that $e_y=[\cK_{Y,y}:\cK_{X,f(y)}]$.
\begin{align*}
   RHS =& \sum_{x\in X}d\frac{\degram(P(x)/\cK_{X,x})}{[P(x):\cK_{X,x}]}+\sum_{y\in Y}\frac{\degram(Q(y)/P(f(y)))}{[Q(y):\cK_{Y,y}]}\\
      =& \sum_{x\in X}\left(\sum_{y\in Y; f(y)=x} e_y\right)\frac{\degram(P(x)/\cK_{X,x})}{[P(x):\cK_{X,x}]}+\sum_{y\in Y}\frac{\degram(Q(y)/P(f(y)))}{[Q(y):\cK_{Y,y}]}\\
      =& \sum_{y\in Y} [\cK_{Y,y}:\cK_{X,f(y)}]\frac{\degram(P(f(y))/\cK_{X,f(y)})}{[P(f(y)):\cK_{X,f(y)}]}+\sum_{y\in Y}\frac{\degram(Q(y)/P(f(y)))}{[Q(y):\cK_{Y,y}]}\\
      =& \sum_{y\in Y}\frac{[Q(y):P(f(y))]\degram(P(f(y))/\cK_{X,f(y)})}{[Q(y):\cK_{Y,y}]}+\sum_{y\in Y}\frac{\degram(Q(y)/P(f(y)))}{[Q(y):\cK_{Y,y}]}\\
      =& \sum_{y\in Y}\frac{[Q(y):P(f(y))]\degram(P(f(y))/\cK_{X,f(y)})+\degram(Q(y)/P(f(y)))}{[Q(y):\cK_{Y,y}]}\\
      =& \sum_{y\in Y}\frac{\degram(Q(y)/\cK_{X,f(y)})}{[Q(y):\cK_{Y,y}]}]
\end{align*}
%Hence the two sides are the same.
\end{proof}

\begin{rmk}
Here we consider morphisms of formal orbifolds which may be ramified (which were not allowed in \cite{parabolic}). Hence the Riemann-Hurwitz formula has the ramification 
term as well. The formula in \cite{parabolic} is a special case of the above theorem. 
\end{rmk}

Recall that if $f:Y\to X$ is $G$-Galois cover of curves then the degree of the ramification divisor has a nice formula in terms of the ramification filtration on the 
Inertia group (Hilbert's different formula). And this leads to a more explicit Riemann-Hurwitz's formula.

\begin{thm}(Riemann-Hurwitz, Hilbert)
 Let $f:Y\to X$ be a $G$-Galois cover branched at $x_1,\ldots,x_n\in X$. For $1\le j\le n$, let $I^j$ be the inertia groups at a point $y_j\in Y$ lying above $x_j$ and let 
$I^j _i$ $i\ge 0$ be the lower ramification filtration on $I^j$. Then
 $$2g(Y)-2=|G|(2g(X)-2)+\sum_{j=1}^n\frac{|G|}{|I^j|}\left(\sum_{i\ge 0} (|I^j_i|-1)\right)$$
\end{thm}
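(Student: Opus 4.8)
The plan is to deduce this from the Riemann-Hurwitz formula for formal orbifolds (Theorem~\ref{parabolic-RH}) by applying it to a cleverly chosen morphism of formal orbifolds built out of the Galois cover $f:Y\to X$. Specifically, I would consider the morphism $f:(Y,O)\to (X,B_f)$, which is \'etale by Corollary~\ref{galois-covers-R_f-parabolic-etale}. Then $g(Y,O)=g(Y)$ and $g(X,B_f)=g(X)+\frac12\sum_{j=1}^n \frac{\degram(\cK_{Y,y_j}/\cK_{X,x_j})}{[\cK_{Y,y_j}:\cK_{X,x_j}]}$, since $B_f(x_j)=\cK_{Y,y_j}$ for a Galois cover (as noted in the proof of that corollary). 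Because the morphism is \'etale, the ramification divisor $D$ is zero, so Theorem~\ref{parabolic-RH} gives $2g(Y)-2 = |G|\bigl(2g(X,B_f)-1\bigr)$.

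The next step is to unwind $2g(X,B_f)-1$ into $2g(X)-2 + \bigl(1 + \sum_{j=1}^n \frac{\degram(\cK_{Y,y_j}/\cK_{X,x_j})}{[\cK_{Y,y_j}:\cK_{X,x_j}]}\bigr)$ and multiply through by $|G|$. Here I would use the standard identities for a Galois cover: $[\cK_{Y,y_j}:\cK_{X,x_j}] = |I^j|$ (the inertia group order, since the residue field extension is trivial over an algebraically closed field), and Hilbert's different formula $\degram(\cK_{Y,y_j}/\cK_{X,x_j}) = \sum_{i\ge 0}(|I^j_i|-1)$, which is recalled in the subsection on the Hurwitz formula. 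Substituting these gives $\frac{|G|}{|I^j|}\sum_{i\ge 0}(|I^j_i|-1)$ for the $j$-th term, which is exactly the claimed summand.

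There is a minor bookkeeping point: the formula I get from Theorem~\ref{parabolic-RH} has $|G|(2g(X,B_f)-1)$, i.e. with a $-1$ rather than $-2$, which is the deliberate normalization in the definition of orbifold genus (each branch point contributes a ``missing point''). When I expand $2g(X,B_f)-1 = (2g(X)-2) + 1 + \sum_j(\cdots)$ the spurious $+1$ is absorbed, and multiplying by $|G|$ reproduces the classical $|G|(2g(X)-2)$ main term plus the ramification sum. I should double-check that the indexing over ``branch points $x_1,\ldots,x_n$'' matches $\supp(B_f)$, which it does since $\supp(B_f)$ is precisely the branch locus of $f$.

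I do not expect a genuine obstacle here; the result is essentially a transcription of the orbifold Riemann-Hurwitz formula through the dictionary $(Y,O)\to(X,B_f)$, combined with Hilbert's different formula. The only thing requiring a little care is making sure the normalizations (the $-1$ versus $-2$, and the $\QQ$-divisor denominators $[P(x):\cK_{X,x}]=|I^j|$) line up correctly, which the computation above handles.
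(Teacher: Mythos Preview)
The paper does not actually prove this theorem: it is stated as a classical fact (ordinary Riemann--Hurwitz combined with Hilbert's different formula) and is being \emph{recalled} so that the orbifold version can be seen as its generalization. Your route is therefore backwards in spirit---Theorem~\ref{parabolic-RH} is itself proved using the classical Riemann--Hurwitz formula---though not strictly circular, since only the basic formula $2g(Y)-2=d(2g(X)-2)+\sum_y\degram(\cK_{Y,y}/\cK_{X,f(y)})$ is used as input there, not the Hilbert refinement.

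More importantly, your treatment of the ``$-1$ versus $-2$'' is wrong. The $-1$ in the statement of Theorem~\ref{parabolic-RH} is a typographical slip, not a deliberate normalization: the proof given there plainly establishes $2g(Y,Q)-2=d(2g(X,P)-2)+\deg(D)$. Your claim that the spurious $+1$ ``is absorbed'' is false. Taking the printed formula literally and expanding gives
\[
|G|\bigl(2g(X,B_f)-1\bigr)=|G|(2g(X)-2)+|G|+\sum_{j=1}^n\frac{|G|}{|I^j|}\,\degram(\cK_{Y,y_j}/\cK_{X,x_j}),
\]
and the extra $|G|$ has nowhere to go. With the correct $-2$ your computation does work, so the fix is simply to recognize the misprint rather than to rationalize it. The direct argument implicit in the paper is in any case shorter: start from the basic Riemann--Hurwitz formula, observe that in a Galois cover each branch point $x_j$ has exactly $|G|/|I^j|$ preimages with identical local different, and substitute Hilbert's formula $\degram(\cK_{Y,y_j}/\cK_{X,x_j})=\sum_{i\ge 0}(|I^j_i|-1)$.
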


There is a variant of the above statement as well for which we need to define Galois covers of formal orbifolds.

\begin{df}
 Let $f:(Y,Q)\to (X,P)$ be a morphism of formal orbifolds. It is called a $G$-Galois cover for a finite group $G$ if $f:Y\to X$ is a $G$-Galois cover, $Q(y)/P(f(y))$ 
is a Galois extension for all $y\in Y$ and for all $g\in G$, $y\in Y$, the extension $Q(y)/\cK_{X,f(y)}$ is isomorphic to $Q(gy)/\cK_{X,f(y)}$. 
\end{df}

\begin{thm}
 Let $f:(Y,Q)\to (X,P)$ be a $G$-Galois cover branched at $x_1,\ldots,x_n\in X$. For $1\le j\le n$, let $I^j$ be the inertia groups at a point $y_j\in Y$ lying above 
$x_j$ i.e. $I^j=Gal(Q(y_j)/P(x_j))$ and let $I^j _i$ $i\ge 0$ be the lower ramification filtration on $I^j$. Then
 $$2g(Y,Q)-2=|G|(2g(X,P)-2)+\sum_{j=1}^n\frac{|G|}{|I^j|[Q(y_j):\cK_{Y,y_j}]}\left(\sum_{i\ge 0} (|I^j_i|-1)\right)$$
\end{thm}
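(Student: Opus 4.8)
The plan is to combine the two Riemann--Hurwitz formulas already available: the orbifold version (Theorem \ref{parabolic-RH}) applied to $f\colon(Y,Q)\to(X,P)$, and the classical Hilbert formula expressing the degree of the ramification divisor of the underlying cover $f\colon Y\to X$ in terms of the lower ramification filtrations of the inertia groups. First I would apply Theorem \ref{parabolic-RH} with $d=|G|$ to get $2g(Y,Q)-2=|G|(2g(X,P)-1)+\deg(D)$, where $D=\sum_{y\in Y}\frac{\degram(Q(y)/P(f(y)))}{[Q(y):\cK_{Y,y}]}y$ is the orbifold ramification divisor, so the whole problem reduces to computing $\deg(D)$ and reconciling the $|G|(2g(X,P)-1)$ term with the desired $|G|(2g(X,P)-2)$ (the discrepancy $|G|$ must get absorbed into the sum, exactly as the bare "$-1$" versus "$-2$" is handled in the proof of Theorem \ref{parabolic-RH}).

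The key point is that the Galois hypothesis makes everything uniform over each fiber. For a fixed branch point $x_j$, all points $y\in f^{-1}(x_j)$ have, by the definition of a $G$-Galois cover of formal orbifolds, isomorphic extensions $Q(y)/\cK_{X,x_j}$; in particular $[Q(y):\cK_{Y,y}]$, $e_y=[\cK_{Y,y}:\cK_{X,x_j}]$, $\degram(Q(y)/P(f(y)))$ and the inertia group $I^j=\Gal(Q(y)/P(x_j))$ are independent of the choice of $y$ above $x_j$. The number of points in $f^{-1}(x_j)$ is $|G|/(e_y|Q(y_j):\cK_{Y,y_j}]\cdot(\text{something}))$ — more precisely, since $f\colon Y\to X$ is $G$-Galois the number of $y$ over $x_j$ is $|G|/|D_j|$ where $D_j$ is the decomposition group of $Y\to X$ at $y_j$, and $|D_j|=e_{y_j}$ since there is no residue extension. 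So I would write $\degram(Q(y_j)/P(x_j))=\sum_{i\ge 0}(|I^j_i|-1)$ by Hilbert's different formula applied to the Galois extension $Q(y_j)/P(x_j)$, and then
$$\deg(D)=\sum_{j=1}^n \#f^{-1}(x_j)\cdot \frac{\degram(Q(y_j)/P(x_j))}{[Q(y_j):\cK_{Y,y_j}]}=\sum_{j=1}^n \frac{|G|}{e_{y_j}[Q(y_j):\cK_{Y,y_j}]}\sum_{i\ge0}(|I^j_i|-1).$$
Then I must check $|I^j|=e_{y_j}\cdot[Q(y_j):P(x_j)]$ is compatible with $[Q(y_j):\cK_{Y,y_j}]=[Q(y_j):P(x_j)]\cdot[P(x_j):\cK_{X,x_j}]/\,e_{y_j}$-type bookkeeping, so that $e_{y_j}[Q(y_j):\cK_{Y,y_j}]$ in the displayed formula matches $|I^j|[Q(y_j):\cK_{Y,y_j}]$ in the statement; here is where the factor $|I^j|$ in the theorem's denominator comes from, namely $|I^j|=[Q(y_j):P(x_j)]\cdot e_{y_j}$ once one notes $e_{y_j}=[P(x_j):\cK_{X,x_j}]\cdot(\text{contribution})$ — I would spell this out via the tower $\cK_{X,x_j}\subset P(x_j)\subset Q(y_j)$ and $\cK_{X,x_j}\subset\cK_{Y,y_j}\subset Q(y_j)$ together with $\cK_{Y,y_j}=P(x_j)\cap$ (the relevant subfield), using that $Q(y_j)=P(x_j)\cK_{Y,y_j}$ when $f$ is étale over $x_j$ — but in general one just has the multiplicativity of degrees.

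Finally I would reassemble: $2g(Y,Q)-2=|G|(2g(X,P)-1)+\deg(D)$, and then argue that the extra $+|G|$ needed to turn $2g(X,P)-1$ into $2g(X,P)-2$ is exactly $\sum_{j=1}^n \frac{|G|}{e_{y_j}}$ minus a correction... actually the cleanest route is to bypass Theorem \ref{parabolic-RH} and instead run the argument directly from classical Riemann--Hurwitz for $f\colon Y\to X$ together with the definition of $g(X,P)$ and $g(Y,Q)$, substituting Hilbert's formula for each $\degram(\cK_{Y,y}/\cK_{X,f(y)})$, for each $\degram(Q(y)/\cK_{Y,y})$, and for each $\degram(P(x)/\cK_{X,x})$, and grouping the sum over $Y$ into sums over fibers using the uniformity. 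I expect the main obstacle to be purely bookkeeping: correctly tracking the three nested ramification filtrations (of $\cK_{Y,y}/\cK_{X,x}$, of $Q(y)/P(x)$, and the one implicit in $P(x)/\cK_{X,x}$ and $Q(y)/\cK_{Y,y}$) and verifying that the denominator $|I^j|[Q(y_j):\cK_{Y,y_j}]$ in the statement is the correct normalization — i.e., that $|G|/(|I^j|[Q(y_j):\cK_{Y,y_j}])$ equals $\#f^{-1}(x_j)$ divided by $[Q(y_j):\cK_{Y,y_j}]$ with the inertia-index factors cancelling against $e_{y_j}$. There is no serious conceptual difficulty beyond Hilbert's different formula and the multiplicativity lemma already proved above; the care needed is entirely in the indices.
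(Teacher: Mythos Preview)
Your approach is exactly the paper's: apply Theorem \ref{parabolic-RH} and then replace $\degram(Q(y_j)/P(x_j))$ by $\sum_{i\ge 0}(|I^j_i|-1)$ via Hilbert's different formula. The paper's entire proof is the two sentences ``By Hilbert's different formula we have $\degram(Q(y_j)/P(x_j))=\sum_{i\ge 0} (|I^j_i|-1)$. The result now follows from Theorem \ref{parabolic-RH}.''

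Two remarks on the places where you hesitate. First, the discrepancy you noticed between $|G|(2g(X,P)-1)$ and $|G|(2g(X,P)-2)$ is simply a typo in the \emph{statement} of Theorem \ref{parabolic-RH}; its proof clearly establishes the formula with $-2$, so there is nothing to ``absorb''. Second, the index bookkeeping is shorter than you make it: the number of points over $x_j$ is $|G|/e_{y_j}$, and multiplicativity of degrees in the tower $\cK_{X,x_j}\subset\cK_{Y,y_j}\subset Q(y_j)$ gives $e_{y_j}\,[Q(y_j):\cK_{Y,y_j}]=[Q(y_j):\cK_{X,x_j}]$, which via the other tower $\cK_{X,x_j}\subset P(x_j)\subset Q(y_j)$ equals $|I^j|\,[P(x_j):\cK_{X,x_j}]$. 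Your instinct that something does not line up cleanly with the printed denominator $|I^j|\,[Q(y_j):\cK_{Y,y_j}]$ is not a flaw in your argument; the quantity that falls out naturally is $[Q(y_j):\cK_{X,x_j}]$, and you should trust that computation rather than the displayed formula. Your detour through ``$|I^j|=[Q(y_j):P(x_j)]\cdot e_{y_j}$'' is wrong (by definition $|I^j|=[Q(y_j):P(x_j)]$), but you do not need it.
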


\begin{proof}
 By Hilbert's different formula we have $\degram(Q(y_j)/P(x_j))=\sum_{i\ge 0} (|I^j_i|-1)$. The result now follows from Theorem \ref{parabolic-RH}.
\end{proof}

\subsection{\'Etale fundamental group}
\begin{df}
Let $\scrX=(X,P)$ be a connected formal orbifold. Let $x$ be a geometric point of $X$ such that its image in $X$ is not in $\supp(P)$. Let $Cov(\scrX)$ be the category of finite \'etale covers of $\scrX$. Let $F_x$ be the functor from $Cov(\scrX)$ to the category of sets given by $F_x(Y,Q)=\Hom_X(x,Y)$ for $[(Y,Q)\to\scrX]\in Cov(\scrX)$. Note that by Corollary \ref{inverse-system} and Lemma \ref{parabolic-etale}, \'etale covers of $\scrX$ form an inverse system. As in the case of \'etale covers of scheme, this functor $F_x$ is also pro-representable by the inverse limit $\tilde \scrX$ of the inverse system $\{(Y_i,Q_i)\in Cov(\scrX)\}_{i\in I}$ of connected \'etale covers of $\scrX$. One fixes a point $\tilde x \in \tilde \scrX$ lying above $x\in \scrX$ (this is equivalent to choosing a point in $Y_i$ over $x$ for all $i\in I$). Then for any $(Y,Q)$ in $cov(\scrX)$, the elements of $F_x(Y,Q)$ is an element of $\Hom_X(\tilde \scrX,Y)$ composed with $\tilde x$. Define the \'etale fundamental group of $\scrX$ to be $Aut(\tilde \scrX/X)$ which is same as $\displaystyle{ \varprojlim_{i\in I} Aut(Y_i/X)}$ over all connected \'etale covers $(Y,Q)\to \scrX$. This group will be denoted by $\pi_1(\scrX)$.
\end{df}

By Lemma \ref{parabolic-etale}, the \'etale fundamental group of $\scrX$ is also $\displaystyle{\varprojlim_{i\in I} Aut(Y_i/\scrX)}$ where $Y_i\to \scrX$ are connected essentially \'etale covers of $\scrX$.
Note that every essentially \'etale cover is dominated by a Galois essentially \'etale cover, hence the inverse limit may be taken over only Galois essentially \'etale covers. 

\begin{pro}\label{pi-is-functorial}
Let $f:\scrY\to \scrX$ be a morphism of formal orbifolds. Then $f$ induces a homomorphism of fundamental group $\pi_1(f):\pi_1(\scrY)\to \pi_1(\scrX)$. Also $\pi_1$ is functor. 
Moreover if $f:\scrY\to\scrX$ is \'etale then $\pi_1(f)$ is injective. 
\end{pro}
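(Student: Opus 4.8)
The plan is to mimic the classical construction of the induced map on étale fundamental groups via pullback of covers, using the fiber product machinery already set up in Proposition \ref{fiber-product} and Proposition \ref{etale-pullback}. Given $f:\scrY\to\scrX$, for any étale cover $(Y',Q')\to\scrX$ form the fiber product $\scrY\times_{\scrX}(Y',Q')$; by Proposition \ref{etale-pullback} the projection to $\scrY$ is étale, so this defines a functor $f^*:Cov(\scrX)\to Cov(\scrY)$ compatible with the fiber functors (a geometric point $y$ of $\scrY$ avoiding $\supp(Q)$ maps to a geometric point $f(y)$ of $\scrX$; one should first observe $f(y)\notin\supp(P)$, which holds since étale-ness is not needed here—$f$ being a morphism of orbifolds and $y\notin\supp(Q)$ forces $\cK_{X,f(y)}\subset P(f(y))\subset Q(y)=\cK_{Y,y}$, but $Q(y)$ trivial over $\cK_{Y,y}$ and the residue-field argument give $f(y)\notin\supp(P)$; alternatively just pick the base point of $\scrX$ to be $f$ of the one on $\scrY$). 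The natural isomorphism $F_y\circ f^*\cong F_{f(y)}$ then yields, by the standard Galois/Grothendieck formalism for pro-representable functors, a continuous homomorphism $\pi_1(f):\pi_1(\scrY,y)\to\pi_1(\scrX,f(y))$.

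Concretely, one can phrase this at the level of the pro-universal covers: let $\tilde\scrX=\varprojlim (Y_i,Q_i)$ and $\tilde\scrY=\varprojlim(Z_j,R_j)$ be the pro-objects representing $F_{f(y)}$ and $F_y$. Each $f^*(Y_i,Q_i)$ is an étale cover of $\scrY$, hence dominated by some $(Z_j,R_j)$; these domination maps are compatible and in the limit produce a morphism $\tilde f:\tilde\scrY\to \tilde\scrX$ over $f$ sending $\tilde y\mapsto\tilde x$. Then $\pi_1(f)(\sigma)$ is defined, for $\sigma\in\Aut(\tilde\scrY/Y)$, by the requirement that $\tilde f\circ\sigma$ and $\pi_1(f)(\sigma)\circ\tilde f$ agree; well-definedness and continuity follow because $\Aut(\tilde\scrX/X)=\varprojlim\Aut(Y_i/X)$ and $\sigma$ permutes each fiber $F_y(f^*(Y_i,Q_i))\cong F_{f(y)}(Y_i,Q_i)$ compatibly. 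Functoriality ($\pi_1(g\circ f)=\pi_1(g)\circ\pi_1(f)$ and $\pi_1(\mathrm{id})=\mathrm{id}$) is then immediate from $(g\circ f)^*\cong f^*\circ g^*$ on the cover categories, i.e.\ from associativity of the normalized fiber product, which is part of its universal property.

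For the last clause, suppose $f:\scrY\to\scrX$ is étale; I want $\pi_1(f)$ injective, equivalently (by the dictionary between open subgroups and connected covers) that every connected étale cover of $\scrY$ is a connected component of the pullback of some connected étale cover of $\scrX$. Here is where the composite of étale morphisms being étale is used: if $(Z,R)\to\scrY$ is a connected étale cover, then the composite $(Z,R)\to\scrY\to\scrX$ is an étale morphism of formal orbifolds (étale-ness of a composite is clear from the definition $Q(y)=P(f(y))$, or via Lemma \ref{parabolic-etale}), hence a finite étale cover of $\scrX$ in the sense of $Cov(\scrX)$—after taking Galois closure if one wants Galois. Pulling this cover back to $\scrY$ along $f$ and using the universal property of the fiber product, $Z$ is (a component of) $\scrY\times_\scrX Z$, so $Z$ lies in the image of $f^*$. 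By the standard argument this forces the image of $\pi_1(f)$ to meet every open subgroup of $\pi_1(\scrX)$ in the expected way and hence $\pi_1(f)$ to be injective; more precisely the image $H=\overline{\pi_1(f)(\pi_1(\scrY))}$ corresponds to a pro-cover of $\scrX$ which receives $\tilde\scrY$, and the factorization $\tilde\scrY\to \scrX_H\to\scrX$ together with the fact that $\tilde\scrY$ is the pro-universal cover of $\scrY$ shows $\pi_1(f)$ has trivial kernel.

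The main obstacle I anticipate is not the formal Galois-theoretic bookkeeping but checking that $f^*$ genuinely lands in $Cov(\scrY)$ and behaves well on base points: one must verify that pulling back an étale cover along an arbitrary orbifold morphism $f$ (not assumed étale) stays étale, which is exactly Proposition \ref{etale-pullback}, and that the fiber functor identification $F_y\circ f^* \cong F_{f(y)}$ holds on the nose—this rests on the key local fact $\cK_{Y,y}\cK_{Z,z}=\cK_{W,w}$ from \cite[Theorem 3.4]{killwildram} that underlies Proposition \ref{fiber-product}, since without it the normalized fiber product could fail to have the expected points above $y$. For the injectivity statement the one subtlety is ensuring the composite cover $Z\to\scrX$ is \emph{finite} étale in the orbifold sense, i.e.\ that its branch data is still a genuine branch data with finite support; but $\supp$ of the relevant pullbacks is controlled by $\supp(P)$ and the (finite) branch locus of $Z\to\scrY\to X$, so this causes no trouble.
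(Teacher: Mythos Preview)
Your argument is correct and follows essentially the same route as the paper. Both proofs construct $\pi_1(f)$ by pulling back \'etale covers of $\scrX$ along $f$ using Proposition~\ref{etale-pullback}, and both establish injectivity in the \'etale case by showing that every connected \'etale cover of $\scrY$ arises (as a component) from a pullback of an \'etale cover of $\scrX$. The paper packages this slightly more cleanly: it factors $\pi_1(f)=i\circ q$, where $i:\Pi=\varprojlim\Aut(\scrY_1/\scrY)\hookrightarrow\varprojlim\Aut(\scrX_1/\scrX)$ is always injective (the $\scrY_1$ being connected components of pullbacks of Galois covers $\scrX_1$ of $\scrX$) and $q:\pi_1(\scrY)\to\Pi$ is the projection coming from the pulled-back covers forming a sub-inverse system; when $f$ is \'etale the two inverse systems coincide, so $q$ is an isomorphism. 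Your phrasing via the fiber-functor/pro-representability formalism amounts to the same thing but is more verbose; the paper's $i\circ q$ decomposition isolates exactly where \'etaleness of $f$ is used.

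One small quibble: your aside about $f(y)\notin\supp(P)$ is not quite right as stated (if $f$ is ramified at $y$ one can have $P(f(y))\subsetneq\cK_{Y,y}$ with $f(y)\in\supp(P)$), but your fallback---choose $y$ outside the finite set $\supp(Q)\cup f^{-1}(\supp(P))$---is fine and is all that is needed.
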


\begin{proof}
If $\scrX_1\to \scrX$ is a connected $G$-Galois \'etale cover then its pullback along $\scrY\to \scrX$ is also \'etale by Proposition \ref{etale-pullback}. By Galois theory, the morphism from a connected component $\scrY_1$ of $\scrY\times_{\scrX}\scrX_1$ to $\scrY$ is also Galois \'etale cover with Galois group a subgroup of $G$. Hence there is a natural injective map  
$$i:\Pi=\varprojlim Aut(\scrY_1/\scrY) \to \varprojlim Aut(\scrX_1/ \scrX)$$ 
where the inverse limit is over all \'etale Galois covers $\scrX_1\to \scrX$. As every $\scrY_1\to \scrY$ corresponding to $\scrX_1\to \scrX$ is a Galois \'etale cover of $\scrY$, the  inverse system on the left is a sub-inverse system of all connected Galois \'etale covers of $\scrY$. Hence there is a natural group homomorphism from $q:\pi_1(\scrY)\to \Pi$. Define the map $\pi_1(f)$ to be $i\circ q$. The functoriality of $\pi_1$ follows from the functoriality of pull-backs.

Finally if $f:\scrY\to \scrX$ is \'etale then the two inverse system of \'etale covers of $\scrY$ are the same so $q$ is an isomorphism. Hence $\pi_1(f)$ is injective. 
\end{proof}

\begin{rmk}\label{pi-pi_1}
For a smooth connected projective curve $X$, $\pi_1^{et}(X)=\pi_1(X,O)$ where $O$ is the trivial branch data.
\end{rmk}

%\subsection{Formal orbifolds}

\begin{df}
We say that a formal orbifold $(X,P)$ is \emph{geometric} if there exist a connected \'etale cover $(Y,O)\to (X,P)$ where $O$ is the trivial branch data on $Y$. And in this scenario $P$ is called a geometric branch data on $X$. 
\end{df}

Note that if $f:Y\to X$ is a morphism of smooth projective curves then $(X,B_f)$ is a geometric formal orbifold. This is because if $\tilde f:\tilde Y\to X$ is the Galois closure of $f$ then $B_f=B_{\tilde f}$. But Corollary \ref{galois-covers-R_f-parabolic-etale} states that $(X,B_{\tilde f})$ is a geometric formal orbifold.

\begin{pro}\label{product-geometric}
 If $(X,Q_1)$ and $(X,Q_2)$ are geometric formal orbifolds then so is $(X,Q_1Q_2)$.
\end{pro}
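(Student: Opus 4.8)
The plan is to use the defining property of geometric formal orbifolds together with the fiber product construction from Proposition \ref{fiber-product}. By hypothesis there are connected \'etale covers $f_1\colon (Y_1,O)\to (X,Q_1)$ and $f_2\colon (Y_2,O)\to (X,Q_2)$. The first observation is that each $f_i$, viewed merely as a morphism of formal orbifolds $(Y_i,O)\to (X,O)$, need not be \'etale; rather $f_i\colon Y_i\to X$ is a finite cover whose associated branch data $B_{f_i}$ satisfies $B_{f_i}\le Q_i$. Indeed, since $f_i\colon (Y_i,O)\to (X,Q_i)$ is \'etale we have $f_i^*Q_i = O$ on $Y_i$, so by Lemma \ref{parabolic-etale} the cover $f_i\colon Y_i\to X$ is essentially \'etale for $(X,Q_i)$, meaning $\cK_{Y_i,y}\subset Q_i(f_i(y))$ for every $y\in Y_i$; taking compositums of Galois closures over the fiber gives exactly $B_{f_i}\le Q_i$. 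Conversely Corollary \ref{galois-covers-R_f-parabolic-etale} (applied to the Galois closure, as in the paragraph preceding the proposition) shows $(X,B_{f_i})$ is geometric.

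Next I would form the fiber product. Apply Corollary \ref{galois-covers-R_f-parabolic-etale} to get, after replacing $Y_i$ by its Galois closure over $X$ (which only enlarges the cover and keeps it \'etale over the respective orbifold by the Remark following the definition of essentially \'etale cover), \'etale morphisms $(Y_i,O)\to (X,B_{f_i})$. Compose with the identity morphisms $(X,B_{f_1}B_{f_2})\to (X,B_{f_i})$, which are valid morphisms of formal orbifolds by Remark \ref{id-morphism} since $B_{f_1}B_{f_2}\ge B_{f_i}$. Now take $W$ to be a connected component of the normalized fiber product of $(Y_1,O)$ and $(Y_2,O)$ over $(X,B_{f_1}B_{f_2})$. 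By Proposition \ref{fiber-product} the branch data on this fiber product at a point $w=(y_1,y_2)$ is $O(y_1)O(y_2)/\cK_{W,w} = \cK_{W,w}$, i.e. the trivial branch data $O$; and by Proposition \ref{etale-pullback} the projection $(W,O)\to (Y_i,O)$ is \'etale, hence so is the composite $(W,O)\to (X,B_{f_1}B_{f_2})$. Since $B_{f_1}B_{f_2}\le Q_1Q_2$ by the compatibility of compositum with the ordering (the product lemma early in the section), the identity gives a morphism $(X,Q_1Q_2)\to (X,B_{f_1}B_{f_2})$, and composing, $(W,O)\to (X,B_{f_1}B_{f_2})$ followed by \dots{} no: I must instead build a cover \emph{of} $(X,Q_1Q_2)$. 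So I pull back $(W,O)\to (X,B_{f_1}B_{f_2})$ — which is \'etale — along the morphism $(X,Q_1Q_2)\to (X,B_{f_1}B_{f_2})$; by Proposition \ref{etale-pullback} the resulting map from a connected component $(W',Q')\to (X,Q_1Q_2)$ is \'etale, and chasing the branch-data formula of Proposition \ref{fiber-product} shows $Q'=O$ on $W'$ (the pullback of trivial branch data stays trivial, since $(f^*O)(w)=O(f(w))\cK_{W',w}=\cK_{W',w}$). Connectedness of some component is guaranteed because the total space is a nonempty normalized fiber product of curves.

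The main obstacle is keeping the bookkeeping of branch data straight through the two fiber-product steps and verifying that the output branch data is genuinely trivial rather than merely small — this hinges entirely on the identity $(f^*O)(w)=\cK_{W',w}$, i.e. that pulling back the trivial branch data yields the trivial branch data, which is immediate from the definition of pullback. A secondary point to be careful about is connectedness: the normalized fiber product need not be connected, but the definition of geometric only requires the \emph{existence} of one connected \'etale cover with trivial branch data, so picking any connected component suffices, and one should note that at least one component dominates each $Y_i$ and hence $X$. With these checks in place the proof is complete; alternatively, one can shortcut by noting $(X,Q_1Q_2)=(X,Q_1)\times_{(X,B)}(X,Q_2)$ for suitable $B\le Q_1\cap Q_2$ as in the Example, and invoke that \'etale covers pull back to \'etale covers, but the direct construction above is cleaner.
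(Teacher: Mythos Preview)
Your overall strategy --- form a normalized fiber product of the two given \'etale covers and check that the resulting branch data is trivial --- is exactly the paper's approach (its one-line proof cites Proposition~\ref{fiber-product}, Proposition~\ref{etale-pullback}, and the fact that composites of \'etale morphisms are \'etale). However, your execution contains a genuine error in the direction of the arrows.

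You attempt to form ``the normalized fiber product of $(Y_1,O)$ and $(Y_2,O)$ over $(X,B_{f_1}B_{f_2})$'', but there are \emph{no} morphisms $(Y_i,O)\to (X,B_{f_1}B_{f_2})$ in the category of formal orbifolds. Such a morphism would require $\cK_{Y_i,y_i}\supset B_{f_1}(x)B_{f_2}(x)$ for every $y_i$ over $x$, whereas after your Galois-closure replacement one has $\cK_{Y_i,y_i}=B_{f_i}(x)=Q_i(x)$, which in general does not contain $Q_j(x)$ for $j\ne i$. (Remark~\ref{id-morphism} gives $(X,B_{f_1}B_{f_2})\to (X,B_{f_i})$, not the reverse, and this cannot be composed with $(Y_i,O)\to (X,B_{f_i})$ as you suggest.) Consequently the fiber product you write down is undefined, and the subsequent claim that the projection $(W,O)\to (Y_i,O)$ is \'etale by Proposition~\ref{etale-pullback} has no meaning.

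The fix is immediate and brings you to the paper's argument. Since $id_X$ gives morphisms $(X,Q_1Q_2)\to (X,Q_i)$, pull back the \'etale cover $(Y_1,O)\to (X,Q_1)$ along $(X,Q_1Q_2)\to (X,Q_1)$; by Proposition~\ref{etale-pullback} this yields an \'etale morphism $(Y_1,R_1)\to (X,Q_1Q_2)$. Now pull back $(Y_2,O)\to (X,Q_2)$ along the composite $(Y_1,R_1)\to (X,Q_1Q_2)\to (X,Q_2)$; the result is \'etale over $(Y_1,R_1)$, and the branch-data formula of Proposition~\ref{fiber-product} gives the trivial branch data on the total space (since $\cK_{W,w}=\cK_{Y_1,y_1}\cK_{Y_2,y_2}=Q_1(x)Q_2(x)$). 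Composing the two \'etale maps gives a connected \'etale $(W,O)\to (X,Q_1Q_2)$. No detour through $B_{f_i}$ or Galois closures is needed: the hypothesis that $(Y_i,O)\to (X,Q_i)$ is \'etale already says $\cK_{Y_i,y_i}=Q_i(x)$, which is all that is used.
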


\begin{proof}
 It follows from Proposition \ref{fiber-product}, Proposition \ref{etale-pullback} and the observation that composition of \'etale morphisms of formal orbifolds are \'etale.
\end{proof}

\begin{pro}\label{approximate-orbifold}
Let $(X,P)$ be a formal orbifold. There exist $Q \le P$ a (unique) branch data on $X$ such that $(X,Q)$ is a geometric formal orbifold curve and $Q$ is maximal with these properties. The natural homomorphism $\pi_1(X,P)\to\pi_1(X,Q)$ is an isomorphism. 
%This $(X,Q)$ will be called the \emph{geometric formal orbifold homeomorphic} to $(X,P)$. 
\end{pro}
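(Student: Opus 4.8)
The plan is to construct $Q$ as a kind of "geometric interior" of $P$ and then to show that passing from $P$ to $Q$ does not change the category of finite \'etale covers. First I would consider the collection $\mathcal{S}$ of all branch data $Q' \le P$ on $X$ such that $(X,Q')$ is geometric. This collection is nonempty since the trivial branch data $O$ lies in it (take the identity cover $(X,O)\to (X,O)$). By Proposition \ref{product-geometric}, if $Q_1,Q_2 \in \mathcal{S}$ then $Q_1Q_2$ is geometric; and since $Q_1,Q_2 \le P$ we have $Q_1Q_2 \le P$ as well (at each point, the compositum of two subextensions of $P(x)/\cK_{X,x}$ lies in $P(x)$), so $Q_1Q_2 \in \mathcal{S}$. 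Thus $\mathcal{S}$ is directed under $\le$. To produce a maximum I need a bound on how large elements of $\mathcal{S}$ can be: if $(X,Q')$ is geometric, witnessed by an \'etale cover $(Z,O)\to (X,Q')$, then by Lemma \ref{parabolic-etale} this means $Z \to X$ is an essentially \'etale cover of $(X,Q')$ and hence at each point $Q'(x)$ contains $\cK_{Z,z}$; in particular $Q'(x)$ is dominated by the branch data $B_g$ of the Galois closure $g$ of the covering. The key finiteness input is that $(X,P)$ has a fundamental group of finite rank (Corollary \ref{pi_1-finite-rank}), so there are only finitely many connected \'etale covers of $(X,P)$ of bounded degree; combined with the fact that any $Q'\in\mathcal S$ satisfies $\deg$ of its defining cover $\le$ something controlled by $[P(x):\cK_{X,x}]$, the partially ordered set $\mathcal{S}$ has finite image in the lattice of branch data $\le P$, so a directed finite poset has a maximum. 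Call it $Q$; uniqueness of a maximum is automatic.

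Next I would verify that $(X,Q)$ is geometric — but this is exactly the statement that $Q\in\mathcal S$, which holds since $\mathcal S$ is closed under the operation $Q_1Q_2$ and $Q$ is the compositum (equivalently the $\le$-supremum) of all its members, and a finite compositum of elements of $\mathcal S$ stays in $\mathcal S$. So the real content is the last sentence: the natural map $\pi_1(X,P)\to \pi_1(X,Q)$, which exists and is injective by Remark \ref{id-morphism} and Proposition \ref{pi-is-functorial} (the identity on $X$ is a morphism $(X,P)\to(X,Q)$ since $Q\le P$, and it is... not \'etale in general, but functoriality still gives the homomorphism), is an isomorphism. Injectivity is the subtle direction here since the identity morphism need not be \'etale, so I would instead argue directly at the level of covers: I claim every connected \'etale cover $(Y',O')\to (X,P)$ "is" a connected \'etale cover of $(X,Q)$, and conversely.

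For the comparison of covers: given a connected Galois essentially \'etale cover $h\colon Y\to X$ of $(X,P)$, i.e. $\cK_{Y,y}\subset P(x)$ for all $y\mapsto x$, let $Q_h$ be the branch data $x\mapsto$ (the Galois extension $\cK_{Y,y}/\cK_{X,x}$, which is independent of $y$). Then $Q_h \le P$ and $(X,Q_h)$ is geometric by Corollary \ref{galois-covers-R_f-parabolic-etale} (it is $B_h$), so $Q_h \in \mathcal{S}$, hence $Q_h \le Q$; thus $h$ is also an essentially \'etale cover of $(X,Q)$. This shows every connected essentially \'etale (Galois) cover of $(X,P)$ is one of $(X,Q)$. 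The converse is immediate from $Q\le P$: any essentially \'etale cover of $(X,Q)$ has $\cK_{Y,y}\subset Q(x)\subset P(x)$, so it is essentially \'etale over $(X,P)$ too. Since the étale fundamental group is the inverse limit of $\Aut(Y/X)$ over connected Galois essentially \'etale covers, and the two systems of such covers coincide (with the same $X$-automorphisms), the two fundamental groups agree and the natural homomorphism is the identification, hence an isomorphism.

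The main obstacle I anticipate is the finiteness/existence-of-maximum step: one must be careful that "$Q'\le P$ and geometric" forces the degree of the witnessing cover to be bounded purely in terms of $P$ — this uses that an essentially \'etale cover of $(X,Q')$ has local extensions sitting inside the fixed finite extensions $P(x)/\cK_{X,x}$, so the cover is dominated by a fixed Galois cover (the one cut out by $B_{(\text{Galois closure of }P(x))}$ at the finitely many $x\in\supp(P)$), of which there are only finitely many connected intermediate covers. Once that bound is in hand, directedness of $\mathcal S$ gives the maximum and the rest is the bookkeeping above; I would also remark that $Q$ can be described intrinsically as the branch data $x\mapsto \bigcap$ of all subextensions $L\subset P(x)$ that occur locally in a geometric $(X,Q')$, or equivalently via $\pi_1$ as the largest quotient through which the étale covers factor compatibly.
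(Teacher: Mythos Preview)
Your overall strategy matches the paper's: build $Q$ as the supremum of geometric branch data below $P$, then identify the essentially \'etale covers of $(X,P)$ with those of $(X,Q)$. The cover-comparison argument in your second half is correct and is exactly what the paper does.

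However, your finiteness step contains a genuine gap. You invoke Corollary \ref{pi_1-finite-rank}, but in the paper that corollary is \emph{deduced from} the very proposition you are trying to prove, so the reference is circular. The detour through bounding degrees of witnessing covers is also misguided: there is no uniform bound on the degree of a cover witnessing that a given $Q'\le P$ is geometric (for instance any \'etale cover of $X$, of arbitrarily large degree when $g(X)\ge 1$, witnesses that the trivial branch data $O$ is geometric). The correct and completely elementary observation---which you do half-state---is that the lattice of \emph{all} branch data $\le P$ is already finite: $\supp(P)$ is a finite set, and for each $x\in\supp(P)$ the extension $P(x)/\cK_{X,x}$ is finite Galois and hence has only finitely many intermediate Galois subextensions. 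So $\mathcal S$ is a finite directed set and therefore has a maximum; no appeal to $\pi_1$ is needed.

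The paper bypasses even this abstract argument by defining $Q(x)$ explicitly as the compositum of $B_f(x)$ over all essentially \'etale covers $f:Y\to X$ of $(X,P)$. Finiteness of $P(x)/\cK_{X,x}$ and of $\supp(P)$ forces this compositum to equal a finite product $B_{f_1}\cdots B_{f_N}$, which is geometric by Proposition \ref{product-geometric} and maximal by construction. After that, the paper's verification that essentially \'etale covers of $(X,P)$ and $(X,Q)$ coincide is identical to yours.
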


\begin{proof}
For a closed point $x\in X$, let $Q(x)$ be the compositum of $B_f(x)$ for all $f:Y\to X$ which are essentially \'etale covers of $(X,P)$. Note that $B_f\le P$ for all such $f$. Note that $\supp(P)$ is finite and $P(x)/\cK_{X,x}$ is a finite extension for all $x\in X$. So there exist finitely many essentially \'etale covers $f_i:Y_i\to X$, say $1\le i\le N$ such that $Q=B_{f_1}B_{f_2}\ldots B_{f_N}$.

Note that by Proposition \ref{product-geometric} if $(X,Q_1)$ and $(X,Q_2)$ are geometric formal orbifolds then so is $(X,Q_1Q_2)$. Also if $Q_1\le P$ and $Q_2\le P$ then so is $Q_1Q_2$. So $Q\le P$ and $(X,Q)$ is a geometric formal orbifold. Also by definition of $Q$, if $(X,Q')$ is a geometric formal orbifold and $Q'\le P$ then $Q'\le Q$. Hence $Q$ is maximal.

Let $f:Y\to (X,P)$ be an essentially \'etale cover. Then $B_f\le P$ and $(X,B_f)$ is a geometric formal orbifold. Hence $B_f\le Q$ which means $f:Y\to (X,Q)$ is also an essentially \'etale cover. Also every essentially \'etale cover of $(X,Q)$ is trivially essentially \'etale cover of $(X,P)$ as $P\ge Q$. Hence $id_X$ induces an isomorphism of $\pi_1(X,P)$ and $\pi_1(X,Q)$.
\end{proof}
 
Given a finite $G$-Galois extension $L/k((t))$ with tame ramification of degree, a Harbater-Katz-Gabber cover (henceforth called HKG-cover) of $\PP^1$ associated to it is a $G$-Galois cover $f:X\to \PP^1$ ramified at only two points $x_0$ and $x_{\infty}$ in $X$ where $f(x_0)=0$ and $f(x_{\infty})=\infty$, $f$ is tamely ramified at 0 and the local extension $\cK_{X,x_{\infty}}/k((t))$ induced by $f$ is same as $L/k((t))$.

The existence of HKG-cover for every finite Galois extension of $k((t))$ ( \cite[Cor 2.4]{Harbater-moduli.of.pcovers}, \cite[Theorem 1.4.1]{Katz-Gabber}) is equivalent to the following statement.
\begin{thm}\label{HKG}
 Let $Q$ be a branch data on $\PP^1$ such that $\supp{Q}=\{0,\infty\}$ and $Q(0)$ is tame and the tame degree of $Q(\infty)/\cK_{\PP^1,\infty}$ is same as 
$[Q(0):\cK_{\PP^1,0}]$. Then $(\PP^1, Q)$ is a geometric formal orbifold.
\end{thm}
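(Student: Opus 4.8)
The plan is to realize $Q$ as the branch data $B_f$ of an honest Galois cover of $\PP^1$. Once we have a connected Galois cover $f\colon Y\to\PP^1$ with $B_f=Q$, Corollary \ref{galois-covers-R_f-parabolic-etale} shows that $f\colon(Y,O)\to(\PP^1,B_f)=(\PP^1,Q)$ is an \'etale morphism of formal orbifolds with $Y$ connected, which is exactly the assertion that $(\PP^1,Q)$ is geometric.

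To build $f$, set $n=[Q(0):\cK_{\PP^1,0}]$. Since $Q(0)$ is tame we have $p:=\Char(k)\nmid n$, and as $k$ is algebraically closed $Q(0)$ is totally ramified; a totally ramified tame extension of degree $n$ of a local field with algebraically closed residue field is $\cK_{\PP^1,0}(\pi^{1/n})$ for a uniformizer $\pi$ (in particular cyclic, hence Galois), and it is the unique such extension inside a fixed algebraic closure of $\cK_{\PP^1,0}$ (see \cite{Serre-local.fields}). Now view $L:=Q(\infty)$ as a finite Galois extension of $\cK_{\PP^1,\infty}$, put $G=\Gal(L/\cK_{\PP^1,\infty})$, and let $G_1\trianglelefteq G$ be the wild inertia subgroup; then $|G/G_1|$ is the tame degree of $L/\cK_{\PP^1,\infty}$, which equals $n$ by hypothesis. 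Applying the HKG construction (\cite[Corollary 2.4]{Harbater-moduli.of.pcovers}, \cite[Theorem 1.4.1]{Katz-Gabber}) to $L/\cK_{\PP^1,\infty}$ yields a connected $G$-Galois cover $f\colon Y\to\PP^1$, \'etale over $\PP^1\setminus\{0,\infty\}$, totally ramified over $\infty$ with local extension at the unique point $\infty_Y$ above $\infty$ isomorphic to $L/\cK_{\PP^1,\infty}$ (so the inertia there is all of $G$), and tamely ramified over $0$ with inertia group a cyclic complement of $G_1$ in $G$; in particular the inertia over $0$ is cyclic of order $|G/G_1|=n$.

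It remains to check $B_f=Q$. The branch locus of $f$ is $\{0,\infty\}=\supp(Q)$, so $\supp(B_f)=\supp(Q)$. Since $f$ is Galois, for $x\in\{0,\infty\}$ and $y$ lying over $x$ the extension $\cK_{Y,y}/\cK_{\PP^1,x}$ is already Galois and independent of the choice of $y$, so $B_f(x)$ equals this extension; as a Galois extension has a well-defined image in a fixed algebraic closure, $B_f(x)$ is thereby determined as a subfield there. At $\infty$ this subfield is $L=Q(\infty)$ by construction. At $0$ it is a totally ramified tame (cyclic, Galois) extension of $\cK_{\PP^1,0}$ of degree $n$, hence by the uniqueness recalled above it equals $Q(0)$. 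Thus $B_f=Q$, and the first paragraph finishes the proof.

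The only non-formal ingredient is the precise shape of the HKG cover — in particular that its ramification over $0$ is cyclic of degree exactly the tame degree of $L$ — together with the uniqueness of tamely ramified extensions of a local field with algebraically closed residue field; everything else is bookkeeping with the definitions of $B_f$ and of \'etale morphisms of formal orbifolds. Matching these up is exactly the content of the equivalence, asserted above, between Theorem \ref{HKG} and the existence of HKG covers, and one can run the same dictionary in reverse to deduce the existence of HKG covers from Theorem \ref{HKG}.
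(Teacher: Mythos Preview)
Your proposal is correct and follows exactly the approach the paper indicates: the paper does not give a separate proof but simply asserts that the statement is equivalent to the existence of HKG covers (citing \cite[Cor.~2.4]{Harbater-moduli.of.pcovers} and \cite[Theorem~1.4.1]{Katz-Gabber}), and you have carefully unpacked one direction of that equivalence---constructing $f$ from the HKG cover of $Q(\infty)$, verifying $B_f=Q$ via uniqueness of tame extensions at $0$, and invoking Corollary~\ref{galois-covers-R_f-parabolic-etale}. Your remark that the dictionary runs in reverse is also in line with the paper's claim of equivalence.
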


\begin{cor}
 Let $Q$ be a branch data on $\PP^1$ such that $\supp(Q)=\{x_0,\ldots,x_m\}$. Let the tame part of $\Gal(Q(x_i)/\cK_{\PP^1,x_i})$ be degree $n_i$. 
Suppose that $n_{i_0}=\max\{n_i: 0\le i\le m\}=\lcm\{n_i: 0\le i\le m, i\ne i_0\}$. Then $(\PP^1,Q)$ is geometric if the $p$-sylow subgroup of $Q(x_{i_0})$ is trivial or there exist $i_1\ne i_0$ 
such that $n_{i_1}=n_{i_0}$.
\end{cor}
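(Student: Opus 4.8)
The plan is to reduce the corollary to Theorem \ref{HKG} by constructing an auxiliary essentially \'etale cover of $(\PP^1,Q)$ whose associated branch data recovers $Q$. First I would observe that it suffices to produce, for the formal orbifold $(\PP^1,Q)$, a geometric branch data $Q'\le Q$ with $Q'=Q$; by Proposition \ref{product-geometric} it is enough to realize each local piece $Q(x_i)$ separately, i.e.\ to show that for each $i$ there is an essentially \'etale cover $f_i:Y_i\to \PP^1$ of $(\PP^1,Q)$ with $B_{f_i}(x_i)=Q(x_i)$ and $B_{f_i}$ trivial away from the finitely many points where the construction is ramified. The compositum of the $B_{f_i}$ is then a geometric branch data below $Q$, and if it equals $Q$ we are done.

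The key step is the construction at the distinguished index $i_0$. In the first case, when the $p$-Sylow subgroup of $Q(x_{i_0})$ is trivial, $Q(x_{i_0})/\cK_{\PP^1,x_{i_0}}$ is tame of degree $n_{i_0}$, and the hypothesis $n_{i_0}=\lcm\{n_i : i\ne i_0\}$ lets me build a tame cover of $\PP^1$ (after choosing coordinates so $x_{i_0}=\infty$) ramified over $\{x_0,\dots,x_m\}$ whose local extension at each $x_i$, $i\ne i_0$, is the tame cyclic extension of degree $n_i$ dividing $n_{i_0}$, and at $\infty$ is $Q(x_{i_0})$; such a tame cover exists because the tame fundamental group of $\PP^1$ minus $m+1$ points is the profinite completion of a group with generators $\gamma_0,\dots,\gamma_m$ subject only to $\prod\gamma_j=1$, and one can prescribe the orders $n_i$ provided they are compatible, which the $\lcm$ condition guarantees. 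In the second case, when there is $i_1\ne i_0$ with $n_{i_1}=n_{i_0}$, I would instead use Theorem \ref{HKG} directly for the (possibly wild) extension $Q(x_{i_0})$: restrict attention to the two points $x_{i_0}$ and $x_{i_1}$, set up an HKG-type cover with the wild part concentrated at $x_{i_0}$ and the matching tame degree $n_{i_1}=n_{i_0}$ at $x_{i_1}$, so that $(\PP^1,Q_{i_0})$ with $\supp(Q_{i_0})=\{x_{i_0},x_{i_1}\}$ is geometric by Theorem \ref{HKG}. For the remaining indices $i\ne i_0$ (and in the first case similarly), since $n_i\mid n_{i_0}$ and those extensions can be assumed tame cyclic of degree $n_i$, they are each geometric by the elementary case of Theorem \ref{HKG} (or by Kummer/Artin--Schreier theory on $\PP^1$), so their branch data are geometric too.

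Having produced geometric formal orbifolds $(\PP^1,Q_j)$ with $Q_j\le Q$ and $Q_j(x_j)=Q(x_j)$ for each $j$, I would invoke Proposition \ref{product-geometric} repeatedly to conclude that $(\PP^1, \prod_j Q_j)$ is geometric, and then check $\prod_j Q_j = Q$: at each point $x_j$ the compositum contains $Q_j(x_j)=Q(x_j)$ and is contained in $Q(x_j)$ since every $Q_i\le Q$, while away from $\supp(Q)$ everything is trivial. Hence $(\PP^1,Q)$ is geometric.

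The main obstacle I anticipate is the wild case at $x_{i_0}$: Theorem \ref{HKG} only directly handles a branch data supported at two points with one tame point whose tame degree matches $[Q(0):\cK_{\PP^1,0}]$, so I must be careful that after isolating $x_{i_0}$ and $x_{i_1}$ the numerical hypothesis $n_{i_1}=n_{i_0}$ really supplies the ``equal tame degree'' input that Theorem \ref{HKG} requires, and that the wild ramification in $Q(x_{i_0})$ does not force additional branching that would spoil $B_{f}\le Q$ away from $\{x_{i_0},x_{i_1}\}$. In the tame case the analogous subtlety is purely group-theoretic: arranging generators of the tame fundamental group with the prescribed orders $n_0,\dots,n_m$ and product the identity is exactly possible when $\max n_i$ divides the $\lcm$ of the rest — which is the stated hypothesis — so that step should go through cleanly once set up correctly.
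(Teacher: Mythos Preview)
Your overall strategy --- decompose $Q$ as a product of two-point branch data each handled by Theorem~\ref{HKG}, then invoke Proposition~\ref{product-geometric} --- is exactly the paper's approach, and your treatment of the point $x_{i_0}$ in the wild case is correct. The gap is at the remaining indices: you write that for $i\ne i_0$ ``those extensions can be assumed tame cyclic of degree $n_i$,'' and in Case~1 you propose a single tame cover realizing the prescribed tame degrees at all points. But the corollary does \emph{not} assume $Q(x_i)$ is tame for $i\ne i_0$; the hypothesis only fixes the tame degrees $n_i$ and speaks about wildness at $x_{i_0}$ alone. Every $Q(x_i)$ may have a nontrivial wild part. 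Your tame-fundamental-group cover in Case~1 therefore realizes only the tame subextension of $Q(x_i)$, not $Q(x_i)$ itself, and your handling of the ``remaining indices'' collapses for the same reason.

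The repair, which is what the paper does, is to apply Theorem~\ref{HKG} at \emph{every} index $i\ne i_0$: take $P_i$ supported on $\{x_{i_0},x_i\}$ with $P_i(x_i)=Q(x_i)$ (the full, possibly wild, extension) and $P_i(x_{i_0})$ the tame cyclic extension of degree $n_i$. Since $n_i$ is the tame degree of $Q(x_i)$, Theorem~\ref{HKG} applies, and since $n_i\mid n_{i_0}$ the tame cyclic extension of degree $n_i$ sits inside $Q(x_{i_0})$, so $P_i\le Q$. In Case~1 no separate construction at $x_{i_0}$ is needed at all: the compositum $\prod_{i\ne i_0}P_i$ at $x_{i_0}$ is tame cyclic of degree $\lcm_{i\ne i_0}n_i=n_{i_0}$, which equals $Q(x_{i_0})$ when the latter is tame. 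In Case~2 one adjoins the two-point datum $P_0$ supported on $\{x_{i_0},x_{i_1}\}$ exactly as you described. This makes the tame-fundamental-group argument unnecessary.
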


\begin{proof}
 We may assume $i_0=0$. For $i\ge 1$ let $P_i$ be the branch data supported at $x_0$ and $x_i$ such that $P_i(x_i)=Q(x_i)$ and $P_i(x_0)$ the tame cyclic extension of degree $n_i$. Note that $n_0=\lcm\{n_i: 1\le i \le m \}$. If $Q(x_0)$ is tame of degree $n_0$ then, it follows that $Q=P_1P_2\ldots P_m$. If there exist $i_1$ such that $n_{i_1}=n_0$ then let $P_0$ be the branch data supported on $x_0$ and $x_{i_0}$ with $P_0(x_0)=Q(x_0)$ and $P_0(x_{i_0})$ be the tame cyclic extension of degree $n_0$. In this case $Q=P_0P_1\ldots P_m$.

Now the result follows from Proposition \ref{product-geometric} and Theorem \ref{HKG}.
\end{proof}

\begin{cor}
 Every purely wild branch data on a smooth projective curve is geometric.
\end{cor}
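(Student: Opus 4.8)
The plan is to reduce to a single branch point and, there, to invoke a realization theorem for $p$-group covers with prescribed wild ramification. First I would reduce the support to one point: writing $\supp(P)=\{x_1,\dots,x_r\}$, let $P_i$ be the branch data on $X$ with $\supp(P_i)=\{x_i\}$ and $P_i(x_i)=P(x_i)$, so that $P=P_1P_2\cdots P_r$. Since a product of geometric branch data is geometric (Proposition \ref{product-geometric}), it suffices to treat each $(X,P_i)$, and so I may assume $\supp(P)=\{x_0\}$ and that $L:=P(x_0)$ is a purely wild Galois extension of $\cK:=\cK_{X,x_0}$ with Galois group $G$ a finite $p$-group, $p=\Char(k)$. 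Because $k$ is algebraically closed there is no residue extension, so the inertia and decomposition groups both coincide with $G$ and $L/\cK$ is totally ramified.

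The core step is to produce a connected $G$-Galois cover $f:Y\to X$ that is \'etale over $X\setminus\{x_0\}$ and whose completion at the (then necessarily unique) point $y_0$ lying over $x_0$ recovers $L/\cK$. Granting such an $f$: its branch locus is $\{x_0\}$, and $B_f(x_0)$ is the Galois closure of $\cK_{Y,y_0}/\cK$, which is $L$ itself since that extension is already Galois with group $G$; hence $B_f=P$. By Corollary \ref{galois-covers-R_f-parabolic-etale}, $f:(Y,O)\to(X,B_f)=(X,P)$ is an \'etale morphism of formal orbifolds, and $Y$ is connected, so $(X,P)$ is geometric. Unwinding the reduction to one point then gives the statement for an arbitrary purely wild $P$.

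The hard part, which I would isolate as a separate lemma, is the existence of $f$. For $X=\PP^1$ this is essentially Theorem \ref{HKG}: the HKG cover attached to the $G$-extension $L$ is tame over one of the two marked points, but the tame inertia of a $p$-group is trivial, so that cover is in fact unramified there and is branched only at the wild point. For a general curve $X$ there is no such explicit model, and I would construct $f$ by formal patching, gluing the local $G$-cover $\spec(\cO_L)\to\spec(\hat\cO_{X,x_0})$ to the trivial cover over $X\setminus\{x_0\}$; this is exactly the type of statement established in \cite{Harbater-moduli.of.pcovers}. Alternatively one can phrase it group-theoretically: over an algebraically closed field of characteristic $p$ the affine curve $X\setminus\{x_0\}$ has $p$-cohomological dimension $1$, so its maximal pro-$p$ quotient of $\pi_1$ is free pro-$p$ and the decomposition subgroup at the boundary point is a free factor, whence the given surjection $\Gal(\cK^{\mathrm{sep}}/\cK)\onto G$ extends to a surjection $\pi_1(X\setminus\{x_0\})\onto G$ inducing the prescribed ramification. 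I expect this realization statement to be the only genuine obstacle: it is special to wild (that is, $p$-group) data — for a tame group it is false, as $\A^1$ carries no nontrivial tame cover — and once it is available the rest is bookkeeping with the definitions already in place.
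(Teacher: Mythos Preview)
Your reduction to a single branch point and the endgame (identifying $B_f$ with $P$ and invoking Corollary \ref{galois-covers-R_f-parabolic-etale}) match the paper exactly. The substantive difference is in how you produce the cover $f:Y\to X$ with prescribed $p$-group inertia at $x_0$ and \'etale elsewhere. You appeal directly to a realization result on $X$ itself, via formal patching or the free pro-$p$ structure of $\pi_1(X\setminus\{x_0\})$. The paper instead reduces to the $\PP^1$ case: it constructs a finite morphism $X\to\PP^1$ of degree $d$ prime to $p$, totally ramified over $\infty$ at $x$ (using a very ample linear system $|dx|$ and projecting from a codimension-two linear subspace avoiding $x$), then pulls back the HKG cover along this map; linear disjointness of the degree-$d$ tame extension $\cK_{X,x}/k((t^{-1}))$ and the $p$-group extension $P(x)/k((t^{-1}))$ guarantees the pulled-back local extension is exactly $P(x)$.

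Your route is conceptually cleaner and immediately isolates the key input, but it leans on a realization statement for general $X$ that, strictly speaking, is not in \cite{Harbater-moduli.of.pcovers} (that paper treats $\PP^1$); you would need to cite a more general patching result or justify the free-factor claim carefully. The paper's projection trick is more self-contained: it uses only Theorem \ref{HKG} on $\PP^1$ plus elementary projective geometry, at the cost of a slightly longer setup.
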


\begin{proof}

 Let $X$ be a smooth projective curve and $x\in X$. Then there is a constant $d_0$ such that for all $d> d_0$, the linear  system $ {\mathbb P}^N\cong | dx | $ is very ample. Let $H_x$ be the hyperplane defined by $dx=0$ in the linear system. Then $H_x\cap X = \{ x\} $. Let $H\subset H_x$ be a hyperpalne (hence codimension 2 in the linear system) such that $x\notin H$. This can be done
 since the base field is infinite amd most hyperplanes in $H_x$ will not pass through $x$. Let $f$ be the projection ${\mathbb P}^N -H \to {\mathbb P}^1$.
 Then $f$ will restrict to a proper and hence finite morphism from $X\setminus \{x\} \to \Aff^1$ of degree $d$.
 
 Let $P$ be a purely wild branch data on $X$ supported only at $x$. Note that $P(x)/k((t^{-1}))$ is a Galois extension with Galois group $G$ a $p$-group where we identify $\cK_{X,x}$ with $k((t^{-1}))$.
 %Note that $X\setminus \{x\}$ is an affine curve over an algebraically closed field, so by noether normalization there exist a finite separable morphism from $X\setminus \{x\} \to \Aff^1$. This morphism clearly extends to a finite separable morphism from $f:X\to \PP^1$ totally ramified at $x$ such that $f^{-1}(\infty)=\{x\}$. 
 Also we may choose $d$ to be coprime to $p$ so that the extension $\cK_{X,x}/k((t^{-1}))$ coming from $f$ is linearly disjoint w.r.t. the $p$-group Galois extension $P(x)/k((t^{-1}))$.
 %Hence every proper subgroup of $G$ is contained in an index $p$ subgroup of $G$. But every index $p$ subgroup of a $p$-group is normal. Hence by Galois theory and Artin-Schreier theory every intermediate field extension of $P(x)/k((t^{-1}))$ contains an Artin-Schreier extension of $k((t^{-1}))$. But there are only finitely many Artin-Schreier extension of $k((t^{-1}))$ contained in $P(x)$ and one can choose $f$ to ensure that the extension $\cK_{X,x}/k((t^{-1}))$ does not contain any of those finitely many Artin-Schreier extension. For instance this can be achieved by replacing $f$ by $g\circ f$ where $g:\PP^1\to \PP^1$ is given by the cyclic cover totally ramified at $0$ and $\infty$ of large degree. 
 
 Let $Y\to \PP^1$ be the HKG-cover associated to the local extension $P(x)/k((t^{-1}))$. Then $Y\to \PP^1$ is totally ramified at $\infty$ and \'etale elsewhere. The pull-back of $Y\to \PP^1$ along $f$, gives a cover $Y_X\to X$ which is totally ramified at $y$ lying above $x$ and \'etale elsewhere. Moreover the extension $\cK_{Y_X,y}/\cK_{X,x}$ is same as $P(x)/k((t^{-1}))$ because $P(x)/k((t^{-1}))$ and $\cK_{X,x}/k((t^{-1}))$ are linearly disjoint. Hence $P$ is a geometric branch data on $X$. Now the result follows from Proposition \ref{product-geometric}.
\end{proof}

\begin{question}\label{pc-is-geometric}
Which formal orbifolds are geometric formal orbifolds?
\end{question}

\begin{example}
 Let $Q$ be a nontrivial tame branch data on $\PP^1$ with support on a single point. Then $(\PP^1,Q)$ is indeed not geometric since the fundamental group of this formal orbifold is trivial. 
\end{example}

\begin{thm}
  The formal orbifold $(X,P)$ with $P$ tame is geometric iff $X$ is different from $\PP^1$ or $X=\PP^1$ and $|\supp(P)|> 2$ or $X=\PP^1$ and $|\supp(P)|=2$ with same branch data on the two points.  
\end{thm}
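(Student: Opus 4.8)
The plan is to analyze the cases of a tame branch data $P$ on a smooth projective curve $X$ according to the genus of $X$ and, when $X=\PP^1$, the structure of the support. For the positive direction, I would construct in each case a connected \'etale cover $(Y,O)\to (X,P)$; equivalently, by Lemma \ref{parabolic-etale} and Corollary \ref{galois-covers-R_f-parabolic-etale}, I want a finite cover $f:Y\to X$ of smooth projective curves whose associated branch data $B_f$ equals $P$. For the negative direction, I would use the Example just above: a nontrivial tame branch data on $\PP^1$ supported at a single point is not geometric because $\pi_1(\PP^1,P)$ is trivial, and I would extend that obstruction argument to cover $\PP^1$ with two points carrying different branch data.

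First I would dispose of the genus $g(X)\ge 1$ case. Since $P$ is tame and supported at finitely many points $x_1,\dots,x_r$ with $P(x_i)/\cK_{X,x_i}$ cyclic of some order $n_i$ (tameness gives cyclicity, as in the characteristic-zero discussion), I want a cover of $X$ ramified exactly over the $x_i$ with local monodromy of order $n_i$. Standard Riemann-existence-type arguments, or more carefully the Katz-Gabber/HKG machinery together with fiber products (Proposition \ref{product-parabolic}, Proposition \ref{product-geometric}), reduce this to realizing one ramification point at a time; and for $g(X)\ge 1$ there is always enough room in $\pi_1$ of the punctured curve to realize a cyclic quotient with prescribed ramification, because the prime-to-$p$ quotient of the \'etale fundamental group of the affine curve $X\setminus\{x_1,\dots,x_r\}$ is the expected free-ish profinite group on $2g+r-1$ generators (plus relations), which surjects onto the relevant cyclic group with the required local behavior. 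So each single-point tame branch data is geometric, and then Proposition \ref{product-geometric} assembles them.

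Next, the $X=\PP^1$ cases. If $|\supp(P)|\ge 3$, I reduce to pairs: write $P=P_1\cdots P_m$ where $P_i$ is supported on $\{x_0,x_i\}$ with appropriate tame cyclic extensions, exactly as in the Corollary preceding Question \ref{pc-is-geometric}, and each $(\PP^1,P_i)$ is geometric by Theorem \ref{HKG} (Harbater-Katz-Gabber), so Proposition \ref{product-geometric} finishes. If $|\supp(P)|=2$ with equal branch data $n=n_0=n_1$, the cover $z\mapsto z^n$ of $\PP^1$ ramified at $0$ and $\infty$ has $B_f=P$ directly, so $(\PP^1,P)$ is geometric. For the remaining cases on $\PP^1$ — support one point, or support two points with $P(x_0)\ne P(x_1)$ — I would show non-geometricity by computing $\pi_1(\PP^1,P)$: any essentially \'etale cover $f:Y\to\PP^1$ of $(\PP^1,P)$ is tamely ramified over at most those one or two points, hence by the structure of the tame fundamental group of $\PP^1$ minus one or two points (trivial in the first case, procyclic with the ramification indices at the two points forced equal in the second), the only such cover with $B_f\le P$ is dominated by the trivial one or by $z\mapsto z^{\gcd}$, whose $B_f$ is strictly smaller than $P$ when the two local degrees differ; in particular there is no \'etale cover $(Y,O)\to(\PP^1,P)$, i.e.\ $P$ is not geometric. (Alternatively, invoke Proposition \ref{approximate-orbifold}: the maximal geometric $Q\le P$ is strictly smaller than $P$ in these cases.)

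The main obstacle I expect is the positive direction for $g(X)\ge 1$ and for $\PP^1$ with three or more points when wild parts are present at the points — wait, here $P$ is tame throughout, so the genuine difficulty is simply the careful bookkeeping that realizing prescribed tame local monodromies one point at a time is possible, i.e.\ verifying that the relevant cyclic-cover-with-one-branch-point exists on every curve of genus $\ge 1$ and on $\PP^1$ once we allow an auxiliary second branch point (this is where Theorem \ref{HKG} and the fiber-product closure do the real work). The rest is case analysis and the $\pi_1$-triviality/procyclicity obstruction, which is routine given the results already established.
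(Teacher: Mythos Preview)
Your negative direction is fine, but the positive direction has two genuine gaps.

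First, for $g(X)\ge 1$ with a single tame branch point you claim one can realize the prescribed local monodromy by a \emph{cyclic} quotient of $\pi_1$ of the punctured curve. That is false: the loop $c$ around the puncture satisfies $c=\prod_j [a_j,b_j]$, so in any abelian quotient $c$ maps to the identity and the cover is unramified at the point. (Single-point tame branch data on a genus $\ge 1$ curve \emph{is} geometric, but one needs a non-abelian cover, e.g.\ a Heisenberg-type quotient where $[a_1,b_1]$ has the required order; your argument does not supply this.) Second, and more seriously, for $X=\PP^1$ with $|\supp(P)|\ge 3$ you try to write $P$ as a product of two-point tame branch data and invoke HKG pair by pair. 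This works only under the arithmetic hypothesis of the corollary you cite, namely that some $n_{i_0}$ equals the lcm of the others. For a signature like $(2,3,5)$ no such decomposition exists: any geometric two-point tame piece has equal degrees at its two points, so the compositum at any fixed point has degree divisible by at least one of the other $n_j$, and you can never recover the prime orders $2,3,5$ separately. The honest cover in that case is the icosahedral $A_5$-cover, which is not a fiber product of cyclic covers.

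The paper avoids both issues entirely: in characteristic zero the statement is exactly Fenchel's conjecture on torsion-free finite-index subgroups of Fuchsian groups, proved by Bundgaard--Nielsen, Fox, and Chau; then, because $P$ is tame, Grothendieck's specialization theorem for the prime-to-$p$ fundamental group (SGA1, XIII) transports the result to characteristic $p$. Your constructive approach would require essentially reproving Fenchel's conjecture from scratch, which the fiber-product machinery of this paper is not strong enough to do on its own.
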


\begin{proof}
In characteristic zero the result is a consequence of work of S. Bundagaard, J. Nielsen (\cite{BN}), R.H. Fox (\cite{Fox}) and T.C. Chau (\cite{Chau}). Now use Grothendieck lifting (\cite[Corollary 2.12, XIII, page 392]{SGA1}) to obtain the result in characteristic $p$.
\end{proof}

\begin{pro}\label{pc-is-almost-geometric}
Every formal orbifold is dominated by a geometric formal orbifold, i.e. given a formal orbifold $\scrX$ there exist a morphism of formal orbifolds $\scrY\to \scrX$ such that $\scrY$ is a geometric formal orbifold. 
\end{pro}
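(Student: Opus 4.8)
The plan is to exhibit the dominating geometric orbifold in the form $(Y,f^*P)$ for a suitable tame cover $f\colon Y\to X$, using two facts established above: first, that every purely wild branch data on a smooth projective curve is geometric, and second, Lemma \ref{parabolic-etale}, which ensures that $f\colon (Y,f^*P)\to (X,P)$ is automatically a morphism of formal orbifolds (since trivially $f^*P\ge f^*P$). Thus everything reduces to producing a finite cover $f\colon Y\to X$ for which $f^*P$ is purely wild.

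To this end, write $\supp(P)=\{x_1,\dots,x_r\}$, let $n_i$ be the degree of the maximal tame subextension of $P(x_i)/\cK_{X,x_i}$ (so $p\nmid n_i$ and $P(x_i)$ is a $p$-group extension of that subextension), and put $n=\lcm(n_1,\dots,n_r)$. First I would construct a connected degree-$n$ Kummer cover $f\colon Y\to X$, tamely ramified everywhere, with ramification index exactly $n$ over every $x_i$. Concretely one chooses $g\in k(X)^\times$ with a simple zero at each $x_i$ --- such $g$ exists because $\operatorname{Pic}^0(X)$ is a divisible group, so, fixing a point $\star\notin\{x_i\}$, the class $[\sum x_i-r\star]$ is $n[C]$ for a divisor $C$ supported away from the $x_i$, making $\sum x_i-r\star-nC$ principal (alternatively one invokes Riemann--Roch) --- and takes $Y$ to be the normalization of $X$ in $k(X)(g^{1/n})$, i.e.\ the curve $y^n=g$. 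Connectedness follows from $v_{x_1}(g)=1$ (so $g$ is not an $e$-th power for $1<e\mid n$), tameness from $p\nmid n$, and the ramification index over $x_i$ equals $n/\gcd(v_{x_i}(g),n)=n$.

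Next I would verify that $f^*P$ is purely wild. For $y\in Y$ lying over $x_i$, the extension $\cK_{Y,y}/\cK_{X,x_i}$ is totally tamely ramified of degree $n$, hence equals the unique tame extension of that degree and therefore contains the unique tame extension of degree $n_i$, which is precisely the maximal tame subextension of $P(x_i)/\cK_{X,x_i}$. Consequently $f^*P(y)=P(x_i)\cK_{Y,y}$ is the compositum, over that common tame subfield, of $\cK_{Y,y}$ with the $p$-group extension $P(x_i)$; so it is Galois over $\cK_{Y,y}$ with Galois group a subgroup of a $p$-group, hence a purely wild (possibly trivial) extension of $\cK_{Y,y}$, since the residue field of $\cK_{Y,y}$ is the algebraically closed field $k$. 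Because $v_{x_i}(g)=1$ forces $\{x_1,\dots,x_r\}$ to be disjoint from the other zeros and poles of $g$, the set $\supp(f^*P)$ lies over $\{x_1,\dots,x_r\}$ and is finite, so $f^*P$ is indeed a purely wild branch data on $Y$.

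Putting these together, $(Y,f^*P)$ is a geometric formal orbifold by the corollary on purely wild branch data, and $f\colon (Y,f^*P)\to (X,P)$ is a morphism of formal orbifolds by Lemma \ref{parabolic-etale}; hence $\scrX=(X,P)$ is dominated by a geometric formal orbifold. I expect the construction of the tame cover $f$ to be the only step requiring more than formal manipulation: the point is to realize, by a global cover of $X$, tame ramification over $\supp(P)$ divisible by the prescribed local tame degrees $n_i$, and this rests on the divisibility of $\operatorname{Pic}^0(X)$ (equivalently, on the classical existence of tame covers with prescribed branch behaviour). Everything else is routine local-field bookkeeping together with results proved earlier in the paper.
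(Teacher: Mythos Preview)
Your argument is correct, but it takes a genuinely different route from the paper's proof.

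The paper produces the dominating geometric orbifold on the \emph{same} underlying curve $X$: after reducing (via Proposition~\ref{product-geometric}) to the case where $P$ is supported at a single point $x$, one approximates the local Galois extension $P(x)/\cK_{X,x}$ by a global one, i.e.\ chooses a finite extension $L/k(X)$ inside $P(x)$ with $L\cK_{X,x}=P(x)$, and lets $f\colon Y\to X$ be the normalization of $X$ in $L$. Then $B_f(x)\supset P(x)$, so $(X,B_f)$ is a geometric formal orbifold and the identity morphism $(X,B_f)\to(X,P)$ realizes the domination. In particular, the paper never leaves $X$: the dominating orbifold is $(X,Q)$ for some $Q\ge P$.

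Your approach instead passes to a different curve: you build a tame Kummer cover $f\colon Y\to X$ that absorbs all the tame ramification of $P$, so that $f^*P$ is purely wild on $Y$, and then you invoke the earlier corollary that purely wild branch data are geometric. This is a clean ``reduce to the wild case'' strategy, and it works; the construction of the cover via divisibility of $\operatorname{Pic}^0(X)$ and the verification that $f^*P$ is a $p$-group extension at each point are both sound. The trade-off is that you rely on the purely-wild corollary (which in turn uses HKG covers and a projection argument), whereas the paper's proof is self-contained modulo the elementary density fact that a finite extension of $\cK_{X,x}$ is induced by a finite extension of $k(X)$. The paper's version also yields the slightly sharper conclusion that the dominating geometric orbifold can be taken with the same underlying curve.
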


\begin{proof}
Let $(X,P)$ be a formal orbifold. Since $P$ has finite support, in view of Proposition \ref{product-geometric}, we may assume $P$ is supported at only one point $x \in X$. Then $k(X)\subset \cK_{X,x}$ and $P(x)/\cK_{X,x}$ is a finite extension. There exist a finite extension $L/k(X)$ with $L\subset P(x)$ such that $L\cK_{X,x}=P(x)$. Let $Y$ be the normalization of $X$ in $L$. Then $f:Y\to X$ is finite cover of projective curves and $B_f(x)$ clearly contains $P(x)$. Hence the orbifold $(X,B_f)$ dominates $(X,P)$. 
\end{proof}

\begin{thm}\label{domination-pi-surjective}
Let $X$ be a smooth projective connected curve. Let $P\ge P'$ be branch data on $X$. Then there is a natural epimorphism $\pi_1(X,P)\to \pi_1(X,P')$ induced from the 
$id_X$ (which by Remark \ref{id-morphism} is a morphism of formal orbifolds).
\end{thm}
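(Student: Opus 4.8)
The plan is the following. By Remark \ref{id-morphism}, the hypothesis $P\ge P'$ makes $id_X\colon (X,P)\to (X,P')$ a morphism of formal orbifolds, so Proposition \ref{pi-is-functorial} already furnishes a homomorphism $\phi\colon \pi_1(X,P)\to\pi_1(X,P')$; the whole content of the theorem is that $\phi$ is onto. The single observation that drives everything is that, because $P\ge P'$, any essentially \'etale cover $g\colon Y\to X$ of $(X,P')$ (so $\cK_{Y,y}\subset P'(g(y))$ for all closed $y$) is automatically an essentially \'etale cover of $(X,P)$ as well, since $\cK_{Y,y}\subset P'(g(y))\subset P(g(y))$. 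By Lemma \ref{parabolic-etale} both $(Y,g^*P')\to (X,P')$ and $(Y,g^*P)\to (X,P)$ are \'etale morphisms of formal orbifolds, and a direct check using Proposition \ref{fiber-product} — the normalization of $Y\times_X X$ is just $Y$, and the composite branch data at $w=(y,y)$ is $g^*P'(y)\,P(g(y))/\cK_{Y,y}=P(g(y))/\cK_{Y,y}=g^*P(y)$ — shows that $(Y,g^*P)\to (X,P)$ is precisely the pullback of $(Y,g^*P')\to (X,P')$ along $id_X$. In particular pulling back along $id_X$ leaves the underlying curve of any cover unchanged, hence preserves connectedness.

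Next I would upgrade this to Galois covers. If moreover $g\colon Y\to X$ is $G$-Galois, then $(Y,g^*P)\to (X,P)$ is a $G$-Galois cover of formal orbifolds in the sense of the definition of Galois cover given above: one has $g^*P(y)=P(g(y))$ for every $y$ (using $\cK_{Y,y}\subset P(g(y))$), so $g^*P(y)/P(g(y))$ is trivial, and $g^*P(\sigma y)=P(g(\sigma y))=P(g(y))=g^*P(y)$ for every $\sigma\in G$. Thus every connected Galois essentially \'etale cover of $(X,P')$, equipped with its pulled-back branch data, is a connected Galois essentially \'etale cover of $(X,P)$ with the same Galois group $G$, and by the functoriality of $\pi_1(-)$ recorded in Proposition \ref{pi-is-functorial} the two canonical surjections $\pi_1(X,P)\onto G$ and $\pi_1(X,P')\onto G$ fit into a commuting triangle with $\phi$.

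Surjectivity of $\phi$ then follows by the usual density argument. Its image $H=\phi(\pi_1(X,P))$ is a closed subgroup of the profinite group $\pi_1(X,P')$; if $H$ were proper it would fail to surject onto some term of the defining inverse system, i.e. there would be a connected Galois essentially \'etale cover $g\colon Y\to X$ of $(X,P')$ with $q(H)\subsetneq G:=\mathrm{Aut}(Y/X)$, where $q\colon\pi_1(X,P')\onto G$ is the canonical surjection. But $(Y,g^*P)\to (X,P)$ is a connected $G$-Galois essentially \'etale cover of $(X,P)$, so the canonical map $\pi_1(X,P)\to G$ is onto and, by the commuting triangle above, equals $q\circ\phi$; hence $q(H)=q(\phi(\pi_1(X,P)))=G$, a contradiction, so $H=\pi_1(X,P')$. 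I do not expect a genuine obstacle: the essential point is simply that along $id_X$ the underlying curve of a cover does not change. The only slightly delicate step is the bookkeeping at the end — checking that the surjection onto $G$ attached to the pulled-back cover is really $q\circ\phi$ — and that is exactly the compatibility of $\pi_1$ with pullbacks, applied to the Cartesian square with corners $(Y,g^*P)$, $(Y,g^*P')$, $(X,P)$, $(X,P')$.
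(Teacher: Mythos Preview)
Your argument is correct and rests on exactly the same observation as the paper's proof: since $P\ge P'$, every essentially \'etale cover of $(X,P')$ is automatically an essentially \'etale cover of $(X,P)$, so the connected (Galois) essentially \'etale covers of $(X,P')$ form a sub-inverse system of those of $(X,P)$, forcing the map on inverse limits to be surjective. The paper states this in two sentences; you have simply unpacked the bookkeeping (the identification of the pullback along $id_X$, the preservation of the Galois structure, and the profinite density argument) in full detail.
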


\begin{proof}
Every essentially \'etale cover of $(X,P')$ is also an essentially \'etale cover of $(X,P)$. Hence the inverse system of connected essentially \'etale covers of $(X,P')$ is a sub-inverse system of connected essentially \'etale covers of $(X,P)$. Hence the natural map $\pi_1(X,P)\to \pi_1(X,P')$ is an epimorphism.
\end{proof}

\begin{cor}\label{pi_1-finite-rank}
 The \'etale fundamental group of a connected formal orbifold is of finite rank.
\end{cor}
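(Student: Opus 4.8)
The plan is to reduce the statement to the case of a geometric formal orbifold, and there to bound the number of topological generators by passing to an \'etale cover which is an honest curve. (Here \emph{finite rank} is understood as topologically finitely generated, in accordance with the abstract.) By Proposition~\ref{approximate-orbifold}, every connected formal orbifold $(X,P)$ admits a geometric branch data $Q\le P$ with $\pi_1(X,P)\cong\pi_1(X,Q)$, and $(X,Q)$ is again a connected formal orbifold; so it suffices to treat the case where $(X,P)$ is geometric. (Alternatively, Proposition~\ref{pc-is-almost-geometric} together with Theorem~\ref{domination-pi-surjective} exhibits $\pi_1(X,P)$ as a continuous quotient of $\pi_1$ of a geometric formal orbifold, and a quotient of a topologically finitely generated profinite group is again one, so this reduction works as well.)

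So assume $(X,P)$ is geometric and fix a connected finite \'etale cover $h\colon(Y,O)\to(X,P)$, where $O$ is the trivial branch data on the connected smooth projective curve $Y$. By Remark~\ref{pi-pi_1} we have $\pi_1(Y,O)=\pi_1^{et}(Y)$, and the \'etale fundamental group of a smooth projective curve of genus $g$ over an algebraically closed field is topologically finitely generated --- indeed topologically generated by $2g$ elements (e.g.\ by lifting $Y$ to characteristic zero and invoking Grothendieck's specialization theorem, \cite{SGA1}). By Proposition~\ref{pi-is-functorial}, since $h$ is \'etale, $\pi_1(h)\colon\pi_1(Y,O)\to\pi_1(X,P)$ is injective; and, exactly as for \'etale fundamental groups of schemes, the fibre functor $F_x$ identifies the set of cosets of the image of $\pi_1(h)$ in $\pi_1(X,P)$ with $F_x(Y,O)=\Hom_X(x,Y)$, a set of cardinality $\deg h$ on which $\pi_1(X,P)$ acts transitively because $Y$ is connected and the residue fields are algebraically closed. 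Hence the image of $\pi_1(h)$ is an open subgroup of $\pi_1(X,P)$, of index $\deg h$. It then remains to invoke the elementary fact that a profinite group $\Gamma$ possessing an open topologically finitely generated subgroup $\Delta$ is itself topologically finitely generated --- if $\Delta=\overline{\langle\delta_1,\dots,\delta_n\rangle}$ and $\gamma_1,\dots,\gamma_d$ represent the cosets of $\Delta$ in $\Gamma$, then $\Gamma=\overline{\langle\delta_1,\dots,\delta_n,\gamma_1,\dots,\gamma_d\rangle}$ --- applied with $\Gamma=\pi_1(X,P)$ and $\Delta$ the image of $\pi_1^{et}(Y)$. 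This shows $\pi_1(X,P)$ is topologically generated by at most $2g(Y)+\deg h$ elements, hence of finite rank.

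The step I expect to require genuine care is the claim that $\pi_1(h)$ has open image, equivalently that the monodromy action of $\pi_1(X,P)$ on the finite fibre $F_x(Y,O)$ is transitive with point-stabiliser the image of $\pi_1(Y,O)$. This is the orbifold analogue of the standard Galois-theoretic properties of the fibre functor recorded just before the definition of $\pi_1(\scrX)$, and the verification should be a verbatim transcription of the scheme-theoretic argument of \cite{SGA1}. If one prefers to bypass it, one may first replace $Y$ by the Galois closure of $Y\to X$ (which remains an \'etale cover of $(X,P)$, since Galois closures of essentially \'etale covers are essentially \'etale), so that $h$ becomes $G$-Galois with $G=\Gal(Y/X)$; then it is enough to know that the kernel of the resulting surjection $\pi_1(X,P)\twoheadrightarrow G$ is $\pi_1^{et}(Y)$, placing $\pi_1(X,P)$ in a short exact sequence $1\to\pi_1^{et}(Y)\to\pi_1(X,P)\to G\to1$ with topologically finitely generated kernel and finite quotient, whence the conclusion again follows.
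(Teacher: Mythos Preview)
Your proof is correct and follows essentially the same route as the paper: reduce to the geometric case via Proposition~\ref{approximate-orbifold}, take a finite \'etale cover $(Y,O)\to(X,P)$, invoke that $\pi_1^{et}(Y)$ is of finite rank, and conclude using the injectivity part of Proposition~\ref{pi-is-functorial} together with Remark~\ref{pi-pi_1}. The paper's argument is terser and immediately chooses the cover to be Galois (your ``alternative'' at the end), thereby sidestepping the open-image discussion; your version is a more detailed unpacking of the same idea, including the explicit bound $2g(Y)+\deg h$, which the paper does not state.
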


\begin{proof}
Note that $\pi_1^{et}(Y)$ is of finite rank for a connected smooth projective curve $Y$ (\cite[Theorem 3.8, X, page 283]{SGA1}). By Proposition \ref{approximate-orbifold}, we may assume that the formal orbifold $(X,P)$ is geometric. Hence there exist a finite \'etale Galois cover $(Y,O)\to (X,P)$. Now the result follows from Proposition \ref{pi-is-functorial} and Remark \ref{pi-pi_1}.
\end{proof} 

\begin{thm}\label{pi_1-approx}
Let $C$ be a smooth affine connected curve and $X$ its smooth completion and $S=X\setminus C$. Then $\pi_1^{et}(C)=\varprojlim \pi_1(X,P)$ where $P$ runs over all branch data on $X$ with $\supp(P)\subset S$.
\end{thm}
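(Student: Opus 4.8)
The plan is to show that finite étale covers of the affine curve $C$ are in natural bijection with finite essentially étale covers of $(X,P)$ as $P$ ranges over branch data supported on $S$, and that this bijection is compatible with the inverse system structure, hence induces the claimed isomorphism of fundamental groups. The key point is that a finite étale cover of $C$ extends uniquely to a finite (possibly ramified) cover $\tilde f : Y \to X$ of the smooth completions, and the ramification of this cover over the points of $S$ is recorded precisely by the branch data $B_{\tilde f}$, which is supported on $S$.

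First I would fix notation: write $\varprojlim_P \pi_1(X,P)$ with $P$ running over branch data on $X$ with $\supp(P)\subset S$; this set of branch data is directed by Lemma~1 (if $P,P'$ are supported on $S$ then so is $PP'$, and $P,P'\le PP'$), and the transition maps $\pi_1(X,PP')\to\pi_1(X,P)$ are the epimorphisms of Theorem~\ref{domination-pi-surjective}. So the inverse limit makes sense. Next I would go through the standard dictionary: a continuous finite $\pi_1^{et}(C)$-set is the same as a finite étale cover $C'\to C$; taking smooth completions gives a finite morphism $\bar{f}:Y\to X$ which is étale over $C$, and conversely any finite morphism $Y\to X$ étale over $C$ restricts to a finite étale cover of $C$. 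So it suffices to identify, for each such $Y\to X$, the branch data on $X$ that "sees" it, namely $B_{\bar f}$, and to check $\supp(B_{\bar f})\subset S$ since $\bar f$ is étale over $C$.

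Then I would argue the two inverse systems match up. On one hand, by the discussion after the definition of $\pi_1$, $\pi_1(X,P) = \varprojlim \Aut(Y_i/X)$ over connected Galois essentially étale covers $Y_i\to X$ of $(X,P)$; by the definition of essentially étale cover, these are exactly the Galois covers $Y_i\to X$ such that $\cK_{Y_i,y}\subset P(f(y))$ for all $y$, equivalently $B_{f}\le P$, equivalently (since $\supp(P)\subset S$) the Galois covers étale over $C$ whose branch data over $S$ is bounded by $P$. Taking the limit over all $P$ supported on $S$, the bound disappears and we recover all finite Galois covers of $X$ that are étale over $C$, i.e. $\pi_1^{et}(C)$. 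More precisely: there is a natural map $\pi_1^{et}(C)\to \varprojlim_P \pi_1(X,P)$ because each $(X,P)$ with $\supp(P)\subset S$ receives $C$ as a morphism of "orbifolds" in the appropriate sense (the open immersion $C\hookrightarrow X$ pulled back); and conversely any compatible system of quotients of the $\pi_1(X,P)$ gives a single finite quotient (since each individual cover $Y\to X$ étale over $C$ is essentially étale for $(X,B_{\bar f})$, a single $P$ in our system), hence a quotient of $\pi_1^{et}(C)$, and one checks these are mutually inverse.

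**The main obstacle** will be making the functoriality and the "limit" statement precise rather than hand-wavy: one must check that the bijection {finite étale covers of $C$} $\leftrightarrow$ $\bigcup_P$ {essentially étale covers of $(X,P)$} is an equivalence of \emph{categories} (respecting morphisms, not just objects), and that it is cofinal/compatible so as to yield an isomorphism of the pro-objects representing the fibre functors — here one uses that over a base point in $C$ the fibre functors agree, and that every finite étale cover of $C$ is dominated by a Galois one whose completion is essentially étale for some $P$ supported on $S$. A secondary technical point is the uniqueness of the extension $Y\to X$ of a cover of $C$ (normalization of $X$ in the function field of $C'$), and that this extension is compatible with fibre products, so that the identification of inverse systems is strict. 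Once these are in place the theorem follows formally by passing to automorphism groups and taking inverse limits.
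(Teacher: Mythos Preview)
Your proposal is correct and follows the same approach as the paper: the key observation is that every finite \'etale cover $D\to C$ extends, via smooth completion, to an essentially \'etale cover $Y\to (X,P)$ for some branch data $P$ supported on $S$ (namely $P=B_{\bar f}$), and conversely. The paper's own proof is a one-line sketch stating exactly this point, so your write-up is simply a more detailed version of the same argument, including the verification that the directed system of $P$'s is well-formed and that the correspondence is compatible with passage to Galois closures and inverse limits.
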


\begin{proof}
Every \'etale cover of $D\to C$ extends to a essentially \'etale cover $Y\to (X,P)$ for some branch data $P$ on $X$ with $\supp(P)\subset S$ where $Y$ is the smooth 
completion of $D$.
\end{proof}

\begin{rmk}
 The inverse limit in the above may be taken over $P$ such that $\supp(P)=S$, since this will be cofinal to all $P$ with $\supp(P)\subset S$.
\end{rmk}

%{\tt\bf maximal \'etale}
Let $f:(Y,Q)\to (X,P)$ be a morphism of formal orbifolds. An intermediate cover $(Z,R)\to (X,P)$ is said to be \emph{maximal \'etale subcover} of $(Y,Q)\to (X,P)$ if $(Z,R)\to (X,P)$ is \'etale and for any other $(Z',R')\to (X,P)$ intermediate \'etale subcover of $(Y,O)\to (X,P)$, the cover $(Z',R')\to (X,P)$ is an intermediate subcover of $(Z,R)\to (X,P)$. In other words the morphism $Z\to X$ factors through $Z'\to X$.

\begin{pro}
Let $f:(Y,Q)\to (X,P)$ be a morphism of formal orbifolds. Let $H=\im[\pi_1(f):\pi_1(Y,Q)\to \pi_1(X,P)]$. Then $H$ is a finite index open subgroup of $\pi_1(X,P)$. Let $L$ be the extension of $k(X)$ corresponding to $H$ (i.e. $L=[k(X)^{P,un}]^H$ where $k(X)^{P,un}$ is the compositum of $k(W)$'s where $(W,B)$ vary over all finite \'etale covers of $(X,P)$). Let $g:Z\to X$ be the normalization of $X$ in $L$ and $R=g^*P$ then $(Z,R)$ is the maximal \'etale cover of $(X,P).$
\end{pro}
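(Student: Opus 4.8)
The plan is to translate everything into Galois theory of function fields. Assuming the underlying $f:Y\to X$ is a covering (which is forced once $Y$ is connected, and makes $k(Y)/k(X)$ finite of degree $d=\deg f$), fix an algebraic closure $\Omega$ of $k(Y)$, so that $k(X)\subseteq k(Y)\subseteq\Omega$ and both $k(X)^{P,un}$ and $k(Y)^{Q,un}$ sit inside $\Omega$; these are Galois over $k(X)$ and $k(Y)$ respectively, and $\pi_1(X,P)=\Gal(k(X)^{P,un}/k(X))$, $\pi_1(Y,Q)=\Gal(k(Y)^{Q,un}/k(Y))$. First I would show $k(X)^{P,un}\subseteq k(Y)^{Q,un}$: for a connected finite \'etale cover $(W,B)\to(X,P)$, Proposition \ref{etale-pullback} makes the normalized fibre product $(W,B)\times_{(X,P)}(Y,Q)\to(Y,Q)$ \'etale, and each of its connected components is again a connected finite \'etale cover of $(Y,Q)$ (the \'etale condition being pointwise); one such component has function field the compositum $k(W)\,k(Y)\supseteq k(W)$ inside $\Omega$, so $k(W)\subseteq k(Y)^{Q,un}$, and taking the compositum over all such $W$ gives the claim. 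Under these identifications $\pi_1(f)$ of Proposition \ref{pi-is-functorial} is the restriction map $\sigma\mapsto\sigma|_{k(X)^{P,un}}$.

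The second step is to identify $H$. Since $k(X)^{P,un}/k(X)$ is Galois, the usual restriction/extension lemma of Galois theory — combined with the normality of $k(Y)^{Q,un}/k(Y)$, which lets one lift an automorphism of $k(X)^{P,un}\,k(Y)$ fixing $k(Y)$ all the way up to $k(Y)^{Q,un}$ — yields
\[
H=\Gal\!\left(k(X)^{P,un}\,\big/\,k(X)^{P,un}\cap k(Y)\right).
\]
Hence $H$ is closed in the Krull topology, $[\pi_1(X,P):H]=[\,k(X)^{P,un}\cap k(Y):k(X)\,]$ divides $d$, so $H$ is open of finite index, and $L=[k(X)^{P,un}]^H=k(X)^{P,un}\cap k(Y)$ is an intermediate field $k(X)\subseteq L\subseteq k(Y)$. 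Consequently $g:Z\to X$ is a covering through which $f$ factors, say $f=g\circ h$ for a (necessarily surjective) covering $h:Y\to Z$.

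Next I would check that $(Z,R)\to(X,P)$, with $R=g^*P$, is an \'etale intermediate subcover of $(Y,Q)\to(X,P)$. Because $L/k(X)$ is finite and $L\subseteq k(X)^{P,un}$, the field $L$ sits inside a compositum of finitely many $k(W_i)$ with $(W_i,B_i)\to(X,P)$ essentially \'etale; by Proposition \ref{product-parabolic}, together with the fact that the essentially \'etale condition passes to connected components of the normalized fibre product, there is a single essentially \'etale cover $W\to X$ of $(X,P)$ with $L\subseteq k(W)$, and $W\to X$ factors through $g$. For $z\in Z$ and any $w\in W$ above $z$ one then has $\cK_{Z,z}\subseteq\cK_{W,w}\subseteq P(g(z))$, so $g:Z\to X$ is an essentially \'etale cover of $(X,P)$, and Lemma \ref{parabolic-etale} gives that $(Z,R)\to(X,P)$ is \'etale. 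Finally $(Y,Q)\to(Z,R)$ is a morphism of formal orbifolds: for $y\in Y$ we have $Q(y)\supseteq P(f(y))$ and $Q(y)\supseteq\cK_{Y,y}\supseteq\cK_{Z,h(y)}$, whence $Q(y)\supseteq P(f(y))\,\cK_{Z,h(y)}=R(h(y))$.

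It remains to prove maximality. Let $(Z',R')\to(X,P)$ be any \'etale intermediate subcover of $(Y,Q)\to(X,P)$; then $k(Z')\subseteq k(Y)$ is a field, so $Z'$ is connected, and by Lemma \ref{parabolic-etale} the map $g':Z'\to X$ is an essentially \'etale cover of $(X,P)$ with $R'=(g')^*P$, so $(Z',R')\to(X,P)$ is a connected finite \'etale cover of the formal orbifold, giving $k(Z')\subseteq k(X)^{P,un}$; and $k(Z')\subseteq k(Y)$ since $f$ factors through $g'$. Therefore $k(Z')\subseteq k(X)^{P,un}\cap k(Y)=L=k(Z)$, i.e.\ $Z\to X$ factors through $Z'\to X$, which is exactly the asserted universal property. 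I do not expect a genuine obstacle here: the only delicate points are matching the categorical $\pi_1(f)$ of Proposition \ref{pi-is-functorial} with the Galois restriction map and keeping all ambient algebraic closures and embeddings compatible; once that bookkeeping is done, the argument is formal.
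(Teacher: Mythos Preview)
Your argument is correct and rests on the same Galois correspondence as the paper's, but the emphasis differs. You compute $L=k(X)^{P,un}\cap k(Y)$ explicitly at the start and run the entire proof on the field side: this gives you the finite index of $H$ (bounded by $\deg f$), the factorization of $f$ through $g$, the essential \'etaleness of $Z\to X$ via a dominating essentially \'etale $W$, and maximality via $k(Z')\subseteq k(X)^{P,un}\cap k(Y)=L$. The paper keeps the argument on the group side: it asserts that $Z\to X$ is essentially \'etale ``by construction'' and, for maximality, invokes the functoriality of $\pi_1$ (Proposition~\ref{pi-is-functorial}) to obtain $\image(\pi_1(h))\supset \image(\pi_1(f))=H$ for any intermediate \'etale $h:(Z',R')\to(X,P)$, then appeals to the Galois correspondence for the domination $Z\to Z'$. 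Several points you spell out carefully---that $H$ is open of finite index, that $(Y,Q)\to(Z,R)$ is a morphism of formal orbifolds, and the precise reason $Z$ is essentially \'etale---are left implicit in the paper. Your route is more self-contained; the paper's is shorter but leans on the $\pi_1$ formalism already established.
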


\begin{proof}
By construction note that $g:Z\to X$ is an essentially \'etale cover of $(X,P)$. Hence by Lemma \ref{parabolic-etale}, $(Z,R)\to (X,P)$ is \'etale. Moreover it is an intermediate cover of $(Y,Q)\to (X,P)$. So enough to show that it is maximal and again by Lemma \ref{parabolic-etale} it is enough to show that if $h:Z'\to X$ is an essentially \'etale cover of $(X,P)$ dominated by $Y$ then $Z$ dominates $Z'$.

Let $R'=h^*P$ be the branch data on $Z'$. Then we have morphisms $(Y,Q)\to^p (Z',R')\to^h (X,P)$ with $h\circ p=f$. By functoriality of $\pi_1$, $\pi_1(f)=\pi_1(h)\circ\pi_1(p)$. Hence $\image(\pi_1(h))\supset \image(\pi_1(f))=H$. Hence by Galois theory $Z$ dominates $Z'$.
\end{proof}

\subsection{Anabelian type conjecture}

In \cite[Theorem 8.6]{Tamagawa} it was shown that if $C$ is a smooth curve different from an ordinary elliptic curve over $k=\bar\FF_p$. Then there are only finitely many smooth curves $X$ such that $\pi_1(X)\cong\pi_1(C)$. This result generalizes to geometric formal orbifold setup as well.

\begin{thm}
Let $\scrC$ be a geometric formal orbifold over $k=\bar\FF_p$ such that the underlying curve has genus at least 2. Then there are only finitely many smooth projective curve $X$ such that $\pi_1(X)\cong\pi_1(\scrC)$. 
\end{thm}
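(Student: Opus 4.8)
The plan is to reduce the orbifold statement to Tamagawa's theorem (\cite[Theorem 8.6]{Tamagawa}) by exhibiting the orbifold fundamental group as sitting inside the fundamental group of an honest smooth projective curve, and then controlling how many curves can produce isomorphic groups. First I would use the hypothesis that $\scrC=(C,P)$ is geometric: by definition there is a connected \'etale cover $(D,O)\to (C,P)$ with $O$ the trivial branch data, i.e. $D$ is an honest smooth projective curve, and this cover can be taken to be Galois with some finite group $\Gamma$ (replace $D$ by the Galois closure, which is still \'etale over $\scrC$). Then $\pi_1(D)=\pi_1(D,O)$ is an open normal subgroup of $\pi_1(\scrC)$ of index $|\Gamma|$, and the genus $g(D)$ is determined by $g(C)$, the branch data $P$, and $\Gamma$ via the orbifold Riemann--Hurwitz formula (Theorem \ref{parabolic-RH}); in particular $g(D)\ge g(C)\ge 2$, so Tamagawa's theorem applies to $D$.

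Next, suppose $X$ is a smooth projective curve with $\pi_1(X)\cong\pi_1(\scrC)$. Fix such an isomorphism $\phi$. The subgroup $\phi^{-1}(\pi_1(D))\subset\pi_1(X)$ is open normal of index $|\Gamma|$, hence corresponds to a connected \'etale Galois cover $D_X\to X$ with group $\Gamma$ and with $\pi_1(D_X)\cong\pi_1(D)$. By Tamagawa's theorem there are only finitely many possibilities for $D_X$ up to isomorphism (as $g(D_X)=g(D)\ge 2$). So it suffices to bound, for each fixed curve $D'$ in this finite list, the number of pairs $(X,\phi)$ up to the relevant equivalence with $D_X\cong D'$ — equivalently, the number of curves $X$ admitting a $\Gamma$-Galois \'etale cover by $D'$ together with an isomorphism $\pi_1(X)\cong\pi_1(\scrC)$ carrying $\pi_1(D')$ to $\pi_1(D)$. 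Since $D'$ is fixed, such an $X$ is a quotient $D'/\Gamma'$ for some finite group $\Gamma'$ acting freely on $D'$ with $\Gamma'\cong\Gamma$; as $\Aut(D')$ is finite (genus $\ge 2$), there are only finitely many such free actions, hence only finitely many such $X$. Combining the two finitenesses gives the result.

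The main obstacle I anticipate is the bookkeeping in the last reduction: namely, making precise that an abstract isomorphism $\pi_1(X)\cong\pi_1(\scrC)$, restricted to the open subgroup $\pi_1(D)$, really does force $D_X$ to be (isomorphic as a bare curve to) one of the finitely many Tamagawa curves, and that the outer action of $\pi_1(X)/\pi_1(D_X)\cong\Gamma$ on $\pi_1(D_X)$ is realized geometrically as a genuine free $\Gamma$-action on $D_X$. This is where one needs that for a curve of genus $\ge 2$ the natural map $\Aut(D')\to\Out(\pi_1(D'))$ is injective (indeed an isomorphism in the relevant sense, by anabelian results / or just injectivity suffices here), so that the group-theoretic data pins down the geometric quotient up to finitely many choices. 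I would also need to be slightly careful that the finitely many curves $X$ obtained this way are counted up to isomorphism of curves and not up to something finer, but since isomorphism classes of quotients by finite groups of automorphisms of a fixed curve form a finite set, this causes no trouble. The rest — Galois closure of the \'etale cover, the index computation, applying Riemann--Hurwitz — is routine given the machinery already developed in this section.
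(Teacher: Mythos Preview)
Your argument is correct and follows essentially the same strategy as the paper: pass to the \'etale Galois cover $(D,O)\to\scrC$, transport the open normal subgroup $\pi_1(D)$ across the isomorphism $\phi$ to get an \'etale Galois cover $D_X\to X$, invoke Tamagawa to bound the isomorphism classes of $D_X$, and then bound the possible quotients $X$.

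The only difference is in the final step. The paper appeals to the theorem of de Franchis (in Kani's form, \cite{Kani}) to conclude that a fixed curve $Y$ of genus $\ge 2$ admits only finitely many coverings $Y\to Z$ of a given degree. You instead observe that, because $D_X\to X$ is \emph{Galois} \'etale, $X$ is literally a quotient of $D_X$ by a subgroup of $\Aut(D_X)$, and since $g(D_X)=g(D)\ge 2$ the automorphism group is finite, so there are only finitely many such quotients. Your route is more elementary and entirely sufficient here; de~Franchis is only genuinely stronger when the covering is not assumed Galois, which is not the situation at hand.

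One remark: the concern you raise about needing injectivity of $\Aut(D')\to\Out(\pi_1(D'))$ is unnecessary. You never have to match the outer action on $\pi_1$ with a geometric action. All you use is the purely geometric fact that $D_X\to X$ is a Galois \'etale cover of curves, which already forces $X\cong D_X/\Gamma'$ for some subgroup $\Gamma'\le\Aut(D_X)$; the finiteness of $\Aut(D_X)$ then finishes the count without any anabelian input beyond Tamagawa's theorem itself. So your ``main obstacle'' is not an obstacle at all, and the bookkeeping you worry about can simply be dropped.
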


\begin{proof}
 Let $X$ be a smooth projective curve such that $\phi:\pi_1(\scrC)\to \pi_1(X)$ is an isomorphism. There exist Galois \'etale morphism  $(D,O)\to \scrC$. Then $\pi_1(D)$ is a finite index normal subgroups of $\pi_1(\scrC)$. Let $N=\phi(\pi_1(D))$ and $Y$ be the \'etale cover of $X$ such that $\pi_1(Y)=N$. Note that $N$ and $\pi_1(D)$ are isomorphic and by Riemann-Hurwitz formula genus of $D$ is at least 2. So there are only finitely many possibilities for $Y$ by Tamagawa's result (\cite[Theorem 8.6]{Tamagawa}). Let $G=\pi_1(X)/N$ then $Y$ is a $G$-cover of $X$. Now using Theorem of de Franchis (\cite{Kani}), for each $Y$ there are only finitely many smooth curves $Z$ and coverings $Y\to Z$ of degree $|G|$. Hence there are only finitely many possibilities for $X$. 
\end{proof}

\section{Orbifold bundles} \label{sec:orbifolds}

Given a curve $X$ over a characteristic zero field and a vector bundle $V$ on $X$ one associates 
a flag structure on $V_x$ for finitely many points $x\in X$. Further one also assigns real 
weights $0< \alpha_1< \ldots < \alpha_r\le 1$ at this flag of length $r$ on $V_x$. The vector 
bundle $V$ together this data is called a 
parabolic bundle on $X$. In the terminology of \cite{parabolic}, when the weights $\alpha_i=p_i/q_i$ are rational numbers, then 
$V$ can be thought of as a \emph{vector bundle on the parabolic curve} $(X,P)$ where $P(x)$ 
is a cyclic extension of degree $\lcm\{q_i\}$. Note that in characteristic zero the field 
extension is purely determined by the extension degree. Hence $P(x)$
may be thought of as a number  rather than a field extension.

Given a geometric parabolic curve $(X,P)$ there exist a Galois etale cover of parabolic curves 
$f: (Y,O)\to(X,P)$ with Galois group $\Gamma$. Then it is known that the category of parabolic
vector bundles on $X$ with rational weights  is equivalent to the category 
of orbifold bundles on $X$, i.e. vector bundles on $Y$ together with a lift of $\Gamma$-action 
\cite{Biswas}, \cite[Lemma 4.4]{parabolic} for any fixed $\Gamma$ and $Y$. Such bundles are called 
\emph{$\Gamma$-bundles} on $Y$.  Morphisms between $\Gamma$-bundles is a $\Gamma$-equivariant morphism of vector bundles.

When the base field has characteristic $p>0$, the analogous category of orbifold bundle on a smooth projective curve $X$ can be defined. But it not yet clear that if $V$ is vector bundle on $X$ what extra data is required to make $V$ into an analogue of a parabolic vector 
bundle with rational weights on the curve $X$. 

We will define vector bundles on the geometric formal orbifold curve $(X,P)$ to be a vector bundle  on $Y$ where  $(Y,O)\to (X,P)$ 
is a Galois etale cover of the formal orbifold $(X,P)$ with Galois group $\Gamma$. We will show that 
this category is independent of the chosen Galois cover $(Y,O)$. These will be the building blocks for the category of 
orbifold bundles on $X$.

We begin by studying $\Gamma$-bundles on $Y$ where $f:Y\to X$ is a $\Gamma$ cover of smooth projective curves.

\begin{df}\label{G-bundle-defn}
 Let $V$ be a vector bundle on $Y$. Let $\mu:\Gamma\times Y \to Y$ be the morphism induced from the action of $\Gamma$ on $Y$ and $p:\Gamma\times Y \to Y$ be the second projection. Giving the structure of a $\Gamma$-bundle on $V$ is equivalent to giving an isomorphism  $\lambda:\mu^*V\to p^*V$ ``compatible'' with the $\Gamma$-action, i.e. for $g\in \Gamma$, if $\lambda_g$ denote $\lambda|_{g\times Y}:g^*V\to V$ then for all $g,h\in \Gamma$ 
 \begin{equation}\label{eq:co-cycle}
  \lambda_{gh}=\lambda_h\circ h^*\lambda_{g}. 
 \end{equation}

 Moreover let $W$ be another $\Gamma$-bundle and $\phi:V\to W$ be a morphism of of vector bundles. Then $\phi$ is a morphism of $\Gamma$ bundles if the following diagram commute:
 \[
 \xymatrix{
    \mu^*V\ar[d]^{\mu^*\phi}\ar[r]  & p^*V\ar[d]^{p^*\phi}\\
    \mu^*W\ar[r]                             & p^*W
    }
  \]
\end{df}

\begin{lemma}\label{action-on-pull-back}
 Let $V$ be an vector bundle on $X$ and $f:Y\to X$ be a Galois cover of smooth projective curves 
with Galois group $\Gamma$. Then $f^*V$ is naturally a $\Gamma$-bundle on $Y$. In fact the action 
on $f^*V$ can be identified as the natural action of $\Gamma$ on ${\mathcal O}_Y$  together with 
the isomorphism $f^*V\cong f^{-1}(V)\otimes {\mathcal O}_Y$. i.e., $\gamma\cdot(v,f)
= (v,\gamma\cdot f)$ for $v\in f^{-1}(V)$ and $f\in {\mathcal O}_Y$. 
\end{lemma}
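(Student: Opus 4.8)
The plan is to unpack the equivariant-descent definition of a $\Gamma$-bundle (Definition \ref{G-bundle-defn}) and simply verify that the cocycle isomorphism arising from the $\Gamma$-action on $\cO_Y$ satisfies the compatibility condition \eqref{eq:co-cycle}. The key observation is that $f^*V = f^{-1}(V)\otimes_{f^{-1}\cO_X}\cO_Y$, so the sheaf $f^*V$ inherits a $\Gamma$-action from the $\Gamma$-action on the second factor $\cO_Y$ (the $\Gamma$-action on $f^{-1}\cO_X$ being trivial since $f$ is $\Gamma$-invariant). Concretely, for $\gamma\in\Gamma$ one has $\gamma\colon \gamma^*\cO_Y\xrightarrow{\sim}\cO_Y$ coming from the isomorphism $\gamma\colon Y\to Y$, and tensoring with $f^{-1}(V)$ gives $\lambda_\gamma\colon \gamma^*f^*V\xrightarrow{\sim} f^*V$; one should also spell out globally the morphism $\lambda\colon\mu^*f^*V\to p^*f^*V$ over $\Gamma\times Y$, which is just $\mathrm{id}_{f^{-1}V}$ tensored with the canonical isomorphism $\mu^*\cO_Y\xrightarrow{\sim}p^*\cO_Y$ that exists because $f\circ\mu = f\circ p$.

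First I would make precise the identification $f^*V\cong f^{-1}(V)\otimes_k\cO_Y$ locally: over an affine open $U=\spec A\subset X$ with $f^{-1}(U)=\spec B$, we have $(f^*V)|_{f^{-1}(U)} = (V|_U)\otimes_A B$, and $\Gamma$ acts on $B$ over $A$; then $\lambda_\gamma$ is $v\otimes b\mapsto v\otimes\gamma(b)$. Second, I would note that this assignment is visibly $\cO_Y$-semilinear over the $\Gamma$-action and hence defines, via the standard dictionary, an isomorphism $\lambda\colon\mu^*(f^*V)\to p^*(f^*V)$ of $\cO_{\Gamma\times Y}$-modules. Third, I would check the cocycle condition \eqref{eq:co-cycle}, $\lambda_{gh}=\lambda_h\circ h^*\lambda_g$: on the local description this reads $v\otimes b\mapsto v\otimes(gh)(b)$ versus first applying $h^*\lambda_g$ (which sends $v\otimes b$ to $v\otimes g(b)$ after the base change along $h$) and then $\lambda_h$ (which applies $h$), giving $v\otimes h(g(b)) = v\otimes (hg)(b)$; so one must be careful with the order of composition in the action, but with the conventions of Definition \ref{G-bundle-defn} this matches. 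This is really the associativity of the $\Gamma$-action on $B$, rephrased.

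The main obstacle — really the only subtle point — is bookkeeping with the direction of the action and the two pullbacks $\mu^*$ and $p^*$: one must confirm that the cocycle identity \eqref{eq:co-cycle} as written (with $\lambda_{gh}=\lambda_h\circ h^*\lambda_g$) corresponds to a \emph{left} action rather than a right action, and that the natural action $\gamma\cdot(v,b)=(v,\gamma\cdot b)$ is compatible with that convention. Once the conventions are fixed this is a formal verification; I expect the proof to be two or three lines, citing the local description and the fact that $f^{-1}(V)$ is constant along the $\Gamma$-orbits so that all the action is concentrated in the $\cO_Y$-factor. No deeper input (no ramification theory, no results on formal orbifolds) is needed here — this lemma is purely the observation that a pullback along a Galois cover carries its canonical linearization.
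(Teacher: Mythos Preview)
Your proposal is correct and follows essentially the same approach as the paper: the paper's proof is the one-line observation that $f\circ\mu=f\circ p$ gives, by functoriality of pullbacks, an isomorphism $\mu^*f^*V\cong p^*f^*V$ compatible with the $\Gamma$-action. You have simply unpacked this same argument with the local description and an explicit check of the cocycle condition, which the paper leaves implicit.
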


\begin{proof}
 Let $\mu$ and $p$ be the action and projection map from $\Gamma\times Y\to Y$. Then $f\circ \mu=f\circ p$. Hence by functoriality of pull-backs, we have $\mu^*f^*V\cong p^*f^*V$ and the isomorphism is compatible with the $\Gamma$-action. 
\end{proof}

In fact this is true even if we have towers of Galois covers.

\begin{lemma}\label{action-on-pull-back-G-bundle}
 Let $f:Z\to Y$ and $b:Y\to X$ be $H$-Galois and $G$-Galois covers respectively such that $b\circ f$ is a $\Gamma$-cover. Then for any $G$-bundle $V$ on $Y$, $f^*V$ has a natural $\Gamma$-bundle structure. Moreover, if $\phi:V\to W$ is a morphism of $G$-bundles then $f^*\phi$ is a morphism of $\Gamma$-bundles. 
\end{lemma}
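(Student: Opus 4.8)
The plan is to reduce the statement to Lemma~\ref{action-on-pull-back} applied not to a curve but to the intermediate $G$-cover $b\colon Y\to X$, by exploiting that the group $\Gamma$ surjects onto $G$ with kernel $H$ (this is the precise meaning of ``$b\circ f$ is a $\Gamma$-cover'' together with $f$ being $H$-Galois and $b$ being $G$-Galois: we have a short exact sequence $1\to H\to\Gamma\to G\to 1$, and $\Gamma$ acts on $Z$ so that the $H$-action is the deck action of $Z/Y$ and the induced $G$-action is the deck action of $Y/X$). First I would set up the two action/projection maps $\mu_Z,p_Z\colon\Gamma\times Z\to Z$ and $\mu_Y,p_Y\colon G\times Y\to Y$, and observe that the covering map $f\colon Z\to Y$ together with the quotient $\Gamma\to G$ fit into a commutative square relating $(\mu_Z,p_Z)$ to $(\mu_Y,p_Y)$: concretely, for $\gamma\in\Gamma$ with image $\bar\gamma\in G$ one has $f\circ(\gamma\cdot-)=(\bar\gamma\cdot-)\circ f$ as maps $Z\to Y$.

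Next, given the $G$-bundle structure $\lambda\colon\mu_Y^*V\xrightarrow{\sim}p_Y^*V$ on $V$ satisfying the cocycle identity \eqref{eq:co-cycle} over $G$, I would define the candidate $\Gamma$-structure on $f^*V$ by pulling $\lambda$ back along $f$ and the quotient map: for $\gamma\in\Gamma$ set $(f^*\lambda)_\gamma := f^*(\lambda_{\bar\gamma})\colon \gamma^*f^*V=f^*(\bar\gamma^*V)\to f^*V$, using functoriality of pullback and the identity $f\circ(\gamma\cdot-)=(\bar\gamma\cdot-)\circ f$ to identify $\gamma^*f^*V$ with $f^*(\bar\gamma^*V)$. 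Then the cocycle identity $(f^*\lambda)_{\gamma\delta}=(f^*\lambda)_\delta\circ\delta^*(f^*\lambda)_\gamma$ follows by applying $f^*$ to the cocycle identity for $\lambda$ over $G$ and using $\overline{\gamma\delta}=\bar\gamma\bar\delta$, i.e.\ it is just the $G$-cocycle identity for the pair $(\bar\gamma,\bar\delta)$ transported along $f^*$. This gives the natural $\Gamma$-bundle structure on $f^*V$. For the morphism statement, if $\phi\colon V\to W$ is $G$-equivariant then the commuting square in Definition~\ref{G-bundle-defn} for $\phi$ over $G$ pulls back under $f^*$ (again using the compatibility of $f$ with the two group actions) to the corresponding square for $f^*\phi$ over $\Gamma$, so $f^*\phi$ is a morphism of $\Gamma$-bundles.

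The only real point requiring care — and the step I expect to be the main obstacle to write cleanly — is the bookkeeping of the canonical isomorphisms identifying $\gamma^*f^*V$ with $f^*(\bar\gamma^*V)$, and checking that these identifications are themselves compatible with composition in $\Gamma$ (so that the transported cocycle identity really is an identity of morphisms $\gamma^*\delta^*f^*V\to f^*V$ and not merely up to some unrecorded isomorphism). This is precisely the kind of coherence that is automatic from the $2$-functoriality of pullback along the commutative square, but spelling it out is the fiddly part; everything else is a direct transport of Definition~\ref{G-bundle-defn} along $f^*$. One can alternatively phrase the whole argument descent-theoretically: a $G$-bundle on $Y$ is the same as a bundle on the stack quotient $[Y/G]$, a $\Gamma$-bundle on $Z$ is a bundle on $[Z/\Gamma]$, and since $[Z/\Gamma]\cong[Y/G]$ (as $b\circ f$ is $\Gamma$-Galois with the stated kernel structure) the functor is literally pullback along this isomorphism of stacks — but staying with the explicit cocycle description as above keeps the proof in line with the rest of this section.
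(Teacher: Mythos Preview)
Your proposal is correct and follows essentially the same approach as the paper: both arguments rest on the compatibility $f\circ(\gamma\cdot-)=(\bar\gamma\cdot-)\circ f$ (equivalently $f\circ\mu_\Gamma=\mu_G\circ q\circ\beta$ and $f\circ p_\Gamma=p_G\circ q\circ\beta$), and obtain the $\Gamma$-structure on $f^*V$ by pulling back the $G$-structure $\lambda$ along this compatibility. The only difference is presentational---the paper packages the construction via two commutative/cartesian squares on $\Gamma\times Z$ and pulls back $\lambda$ along $q\circ\beta$ in one step, whereas you work element-by-element with each $\lambda_{\bar\gamma}$ and verify the cocycle condition explicitly; your concern about coherence of the identifications $\gamma^*f^*V\cong f^*(\bar\gamma^*V)$ is exactly what the paper's global diagrammatic formulation sidesteps.
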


\begin{proof}
 First we note that $H$ is a normal subgroup of $\Gamma$, $G=\Gamma/H$ and let $q:\Gamma\to G$ be the group epimorphism. Let $\mu$ and $p$ (with appropriate subscript) denote the action and projection maps respectively. Since $V$ is a $G$-bundle, there is an isomorphism $\lambda:\mu_G^*V\cong p_G^*V$ compatible with the $G$-action.
 
 We have the following two cartesian diagrams:

 \begin{equation}\label{eq:projection}
  \xymatrix{
    \Gamma\times Z \ar[r]^{\tilde q}\ar[d]^{\beta}  &  G\times Z\ar[r]^{\tilde p_G}\ar[d]   &  Z\ar[d]^f\\
    \Gamma\times Y \ar[r]_q                         & G\times Y \ar[r]_{p_G}              & Y}
 \end{equation}
 
 \begin{equation}\label{eq:action}
  \xymatrix{
    \Gamma\times Z \ar[r]^{\tilde q}\ar[d]^{\beta}  &  G\times Z\ar[r]^{\tilde \mu_G}\ar[d]  &  Z\ar[d]^f\\
    \Gamma\times Y \ar[r]_q                         & G\times Y \ar[r]_{\mu_G}             & Y}
 \end{equation}

 From \eqref{eq:projection} we obtain that $f\circ\tilde p_G\circ\tilde q=p_G\circ q \circ \beta$. Also note that $\tilde p_G\circ\tilde q=p_{\Gamma}$. Hence $f\circ p_{\Gamma}=p_G\circ q \circ \beta$.
 
 Similarly from \eqref{eq:action}, we get $f\circ\tilde \mu_G\circ\tilde q=\mu_G\circ q \circ \beta$. Also since the action of $\Gamma$ on $Y$ factors through $G$, we have the following commutative diagram:
 
 \begin{equation}
  \xymatrix{
    \Gamma\times Z \ar[rr]^{\mu_{\Gamma}}\ar[d]^{\beta}  &                           &  Z\ar[d]^f\\
    \Gamma\times Y \ar[r]_q                              & G\times Y \ar[r]_{\mu_G}  & Y}
 \end{equation}
 
 And this gives us $f\circ \mu_{\Gamma}=\mu_G\circ q\circ\beta$. The pull-back of $\lambda$ along $q\circ \beta$ induces the required isomorphism $$\mu_{\Gamma}^*f^*V\cong\beta^*q^*\mu_G^*V\xrightarrow{\sim}\beta^*q^*p_G^*V\cong p_{\Gamma}^*f^*V$$.
 
 The statement about pullback of morphisms also follow from the identities $f\circ p_{\Gamma}=p_G\circ q \circ \beta$ and $f\circ \mu_{\Gamma}=\mu_G\circ q\circ\beta$ and functoriality of pull-backs.
\end{proof}

\begin{lemma}\label{invariant-pushfoward-etale}
 Let $f:Y\to Y'$ be a $G$-Galois cover of smooth complete curves. Let $V$ be a vector bundle on $Y'$ and $W$ be a $G$-bundle on $Y$. Then $(f_*f^*V)^G=V$. Further if $f$ is \'etale,  then $f^*[(f_*W)^G]=W$.
\end{lemma}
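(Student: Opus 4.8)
The plan is to treat the two formulas separately, proving the first for an arbitrary Galois cover and the second by exploiting that $f$ is étale.

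For the identity $(f_*f^*V)^G=V$, I would begin with the projection formula, which identifies $f_*f^*V\cong f_*\cO_Y\otimes_{\cO_{Y'}}V$. Under this identification, and using the description of the $G$-action on $f^*V$ given in Lemma \ref{action-on-pull-back}, the $G$-action on the right-hand side is the natural action on $f_*\cO_Y$ tensored with the trivial action on $V$. Since $V$ is locally free the functor $V\otimes_{\cO_{Y'}}(-)$ is exact, hence commutes with the formation of $G$-invariants, so $(f_*f^*V)^G\cong (f_*\cO_Y)^G\otimes_{\cO_{Y'}}V$. It then remains to recall that $(f_*\cO_Y)^G=\cO_{Y'}$, which holds because $Y'=Y/G$ for a Galois cover (locally, if $Y'=\spec A$ and $Y=\spec B$ then $B^G=A$, as $A$ is integrally closed in $\mathrm{Frac}(A)=\mathrm{Frac}(B)^G$). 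This yields $(f_*f^*V)^G\cong V$ naturally.

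For the second formula the key point is that, $f$ being étale and Galois, the $G$-action on $Y$ is free: a nontrivial $g\in G$ fixing a point would lie in a nontrivial inertia group, contradicting unramifiedness. Hence $f$ is an étale $G$-torsor, so there is an étale surjection $U\to Y'$ over which $f$ becomes the split torsor $G\times U\to U$; one may even take $U=Y$, using $Y\times_{Y'}Y\cong\coprod_{g\in G}Y$. I would define the natural comparison map $\eta\colon f^*((f_*W)^G)\to W$ by adjunction, as the map corresponding to the inclusion $(f_*W)^G\hookrightarrow f_*W$, and check that $\eta$ is an isomorphism after the faithfully flat base change to $U$. Over such a $U$ the cocycle condition \eqref{eq:co-cycle} forces a $G$-bundle on $\coprod_{g\in G}U$ to be a disjoint union of copies of a single bundle $W_0$ on $U$ permuted by the regular representation of $G$; then $(f_*W)^G$ restricts to the diagonal copy $W_0$, and its pullback to $\coprod_{g\in G}U$ is again $\coprod_{g\in G}W_0=W$, with $\eta$ becoming the identity on each sheet. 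Since forming $G$-invariants is a finite limit and hence commutes with the flat base change $U\to Y'$, while $f_*$ commutes with flat base change, this proves $\eta$ is an isomorphism; a parallel local check shows $\eta$ is $G$-equivariant, giving the equality as $G$-bundles.

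The step I expect to be the main obstacle is the bookkeeping in the second part: one must be careful that forming $(f_*(-))^G$ is compatible with the flat base changes used, and that $\eta$ is an honest natural transformation — so that verifying it is an isomorphism étale-locally is legitimate — rather than patching ad hoc local isomorphisms. A cleaner but less elementary packaging sidesteps this: since $f$ is a $G$-torsor, $G$-equivariant bundles on $Y$ are precisely bundles on $Y'$ equipped with descent data along $f$, so by fppf descent $W\mapsto(f_*W)^G$ is an equivalence with quasi-inverse $f^*$, and the two displayed identities are its unit and counit — the first being valid for any Galois $f$ since it uses only $(f_*\cO_Y)^G=\cO_{Y'}$. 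I would present the hands-on argument but remark on this descent interpretation.
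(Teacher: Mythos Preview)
Your proposal is correct and, for the first identity, essentially identical to the paper's argument: projection formula, the $G$-action being through $f_*\cO_Y$ alone (via Lemma~\ref{action-on-pull-back}), and $(f_*\cO_Y)^G=\cO_{Y'}$.

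For the second identity your main argument differs in presentation from the paper's, though the underlying idea is the same. You construct the comparison map $\eta$ explicitly and verify it is an isomorphism after the faithfully flat base change $Y\to Y'$, using $Y\times_{Y'}Y\cong\coprod_{g\in G}Y$ and the compatibility of $f_*$ and of finite limits with flat base change. The paper instead goes straight to the descent interpretation you mention at the end: it observes that the cocycle condition \eqref{eq:co-cycle} for a $G$-bundle is precisely an \'etale descent datum along $f$, so $W$ descends to some $V$ on $Y'$ with $f^*V\cong W$, and then the already-proved first identity gives $(f_*W)^G=(f_*f^*V)^G=V$, whence $f^*[(f_*W)^G]\cong W$. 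The paper's route is shorter and avoids the bookkeeping you flag as the main obstacle; your explicit argument has the virtue of making the comparison map and its local triviality visible, but since you already note the descent packaging, you could simply lead with that and drop the local computation.
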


\begin{proof}
First one notices that $\cO_Y$ is a $G$-vector bundle on $Y$. This implies that $G$ acts on the vector bundle $f_*\cO_Y$ as automorphisms of vector bundles on $Y'$. Moreover the completion stalks of 
$f_*\cO_Y$ at $x\in Y'$ can be identified as $\hat \cO_{Y',x}\otimes k[G]$ where the $G$ action on this
tensor product is given by the action on the second factor since $G$ acts trivially on $\cO_{Y'}$.
Hence $\cO_{Y'}$ is a $G$-invariant subspace of $f_*\cO_Y$ and $G$ acts faith fully on $Y$, it follows
that $(f_*\cO_Y)^G=\cO_{Y'}$. Now if $V$ is any vector bundle on  $Y'$, then by projection formula, 
we have $(f_*f^*V)=V\otimes f_*f^*\cO_{Y'}$. Moreover the action of $G$ on 
$V\otimes f_*f^*\cO_{Y'}$ is by the action on the second factor and acting trivially on $V$ as 
action on $f^{-1}(V)$ is trivial by Lemma~\ref{action-on-pull-back}. Hence the invariance is obtained by  
$$(f_*f^*V)^G=V\otimes (f_*f^*\cO_{Y'})^G \cong V\otimes \cO_{Y'}\cong V$$

For the second part, 
%we first analyse the direct image of $\Gamma$ bundles. Let $x\in X$ be a point not belonging to the discriminant. Then $f$ is \'etale at $y\in Y$ with $f(y)=x$. 
note that $f:Y\to Y'$ is a $\Gamma$-Galois \'etale morphism. The descent datum for locally free sheaves on $Y$ is exactly the co-cycle condition \eqref{eq:co-cycle} which makes a locally free sheaf into a $\Gamma$-bundle. Hence by \'etale descent any $\Gamma$-bundle $W$ on $Y$ descends to a vector bundle $V$ on $Y'$. 
Now one can use the first part to see that this $V$ is isomorphic to the invariant 
direct image $(f_*W)^G$. Hence it follows that $f^*[(f_*W)^G]\cong W$.
\end{proof}

\begin{df}
 Let $(X,P)$ be a geometric formal orbifold curve. Fix an \'etale Galois cover $(Y,O)\to (X,P)$ with Galois group $\Gamma$. A \emph{vector bundle on the geometric formal orbifold} $(X,P)$ is a $\Gamma$-bundle 
on $Y$. Morphisms of vector bundles on $(X,P)$ is a $\Gamma$-equivariant sheaf homomorphism between the corresponding $\Gamma$-bundles on $Y$. This category will be denoted by $Vect(X,P)$.
\end{df}

\begin{pro}\label{parabolic-bundles-well-defined}
 The category of vector bundles on $(X,P)$ does not depend on the choice of the \'etale Galois cover.
\end{pro}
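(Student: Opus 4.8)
The plan is to show that two choices of étale Galois cover give equivalent categories by passing through a common refinement. Suppose $(Y_1,O)\to (X,P)$ and $(Y_2,O)\to (X,P)$ are étale Galois covers with groups $\Gamma_1$ and $\Gamma_2$. By Corollary~\ref{inverse-system} the essentially étale covers of $(X,P)$ form an inverse system, so there is a connected essentially étale cover $(Z,O)\to (X,P)$ dominating both $(Y_1,O)$ and $(Y_2,O)$; after replacing $Z$ by a Galois closure (which is again essentially étale and dominates both) we may assume $Z\to X$ is $\Gamma$-Galois and the maps $Z\to Y_i$ are $H_i$-Galois with $H_i\trianglelefteq\Gamma$ and $\Gamma/H_i\cong\Gamma_i$. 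It therefore suffices to prove the following: if $f\colon Z\to Y$ is an $H$-Galois étale cover and $b\colon Y\to X$ is $G$-Galois with $b\circ f$ a $\Gamma$-cover (so $H\trianglelefteq\Gamma$, $G=\Gamma/H$), then the pullback functor $f^*$ induces an equivalence between the category of $G$-bundles on $Y$ and a suitable subcategory of $\Gamma$-bundles on $Z$ — and moreover every $\Gamma$-bundle on $Z$ arises this way, since the $H$-action descends by étale descent along the étale cover $f$.

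The key steps, in order: (1) By Lemma~\ref{action-on-pull-back-G-bundle}, $f^*$ sends $G$-bundles on $Y$ to $\Gamma$-bundles on $Z$ and $G$-equivariant morphisms to $\Gamma$-equivariant ones, so $f^*$ is a well-defined functor. (2) By Lemma~\ref{invariant-pushfoward-etale}, for a $G$-bundle $V$ on $Y$ we have $(f_*f^*V)^{H}\cong V$ naturally (applying the first part of that lemma to the $H$-Galois cover $f$), and for a $\Gamma$-bundle $W$ on $Z$ — which is in particular an $H$-bundle — the second part gives $f^*[(f_*W)^{H}]\cong W$ since $f$ is étale; one checks the residual $G=\Gamma/H$-action on $(f_*W)^H$ makes it a $G$-bundle on $Y$ and that these isomorphisms are natural and equivariant. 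Hence $f^*$ and $W\mapsto (f_*W)^H$ are mutually inverse equivalences between $G$-bundles on $Y$ and $\Gamma$-bundles on $Z$. (3) Apply this with $(Y,G)=(Y_1,\Gamma_1)$ and then with $(Y,G)=(Y_2,\Gamma_2)$, both over the common $Z$: this yields equivalences $Vect(X,P)_{Y_1}\simeq \{\Gamma\text{-bundles on }Z\}\simeq Vect(X,P)_{Y_2}$, and composing gives the desired equivalence independent of the chosen cover.

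The main obstacle I expect is bookkeeping with the group actions in step (2): one must verify that taking $H$-invariants of the pushforward of a $\Gamma$-bundle carries a natural residual $G$-action (this uses that $H$ is normal in $\Gamma$, so $\Gamma$ acts on $f_*W$ and the $H$-invariants are a $\Gamma/H$-subsheaf), that this action satisfies the cocycle condition \eqref{eq:co-cycle} on $Y$, and that the unit and counit of the adjunction $(f^*, (f_*-)^H)$ are compatible with all the equivariance structures — i.e. that the étale-descent equivalence of Lemma~\ref{invariant-pushfoward-etale} is an equivalence of categories with $\Gamma$-actions, not merely of the underlying bundle categories. None of this is deep, but it requires care to state the naturality precisely; the geometric input (existence of the common dominating Galois cover, and the invariant-pushforward identities) is already in hand from the results above.
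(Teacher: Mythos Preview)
Your proposal is correct and follows essentially the same route as the paper: pass to a common dominating \'etale Galois cover $(Z,O)\to(X,P)$ (the paper takes a connected component of the normalized fiber product), then use Lemma~\ref{action-on-pull-back-G-bundle} for $f^*$ and Lemma~\ref{invariant-pushfoward-etale} together with \'etale descent for $W\mapsto (f_*W)^{H}$ to exhibit the equivalence between $\Gamma_i$-bundles on $Y_i$ and $\Gamma$-bundles on $Z$. The bookkeeping you flag in step~(2)---that the residual $\Gamma/H$-action on $(f_*W)^{H}$ is well defined because $H\trianglelefteq\Gamma$---is exactly what the paper asserts in one sentence, so there is no substantive difference.
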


\begin{proof}
Let $(Y_i,O)\to (X,P)$ for $i=1,2$ be \'etale Galois covers with Galois group $\Gamma_i$. Let $(Y,O)\to (X,P)$ be an \'etale Galois cover with Galois group $\Gamma$ which dominates $Y_i$'s (for instance $Y$ could be a connected component of the normalized fiber product). Hence the cover $f_i:Y\to Y_i$ is \'etale Galois with Galois group $G_i$ where $G_i$ is the kernel of the natural surjection $\Gamma\to \Gamma_i$. 

We shall show that the category of $\Gamma$-bundles on $Y$ is equivalent to the category of $\Gamma_1$-bundles on $Y_1$ (similarly it follows that it is equivalent to category of $\Gamma_2$-bundles on $Y_2$).

If $V$ is a $\Gamma_1$-bundle on $Y_1$, then $f^*V$ is a $\Gamma$-bundle on $Y$ by Lemma \ref{action-on-pull-back-G-bundle}. Since $G_1$ is a subgroup of $\Gamma$, given a $\Gamma$-bundle $W$ on $Y$, it is also a $G_1$-bundle on $Y$ and $V:=[f_*W]^{G_1}$ is a vector bundle on $Y_1$ by \'etale Galois descent. The $\Gamma$-action on $W$ induces a $\Gamma_1=\Gamma/G_1$-action on $V$ which is compatible with $\Gamma_1$-action on $Y_1$. Hence $V$ is $\Gamma_1$-bundle on $Y_1$. Finally by Lemma \ref{invariant-pushfoward-etale} the two categories are equivalent.
\end{proof}

\begin{thm}
Let $P_1\ge P_2$ be geometric branch data on $X$. There exists a functor $Vect(X,P_2)\to Vect(X,P_1)$ which is an embedding.
\end{thm}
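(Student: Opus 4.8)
The plan is to realize both categories $Vect(X,P_2)$ and $Vect(X,P_1)$ on a common étale Galois cover and then exhibit the functor as pullback. Concretely, let $(Y_2,O)\to(X,P_2)$ be an étale Galois cover with group $\Gamma_2$ witnessing that $P_2$ is geometric, and let $(Y_1,O)\to(X,P_1)$ be one with group $\Gamma_1$. Since $P_1\ge P_2$, the identity $id_X:(X,P_1)\to(X,P_2)$ is a morphism of formal orbifolds by Remark \ref{id-morphism}. Form the fiber product $(W,F)$ of $(Y_2,O)\to(X,P_2)$ with $(X,P_1)\to(X,P_2)$ in the sense of Proposition \ref{fiber-product}; by Proposition \ref{etale-pullback} the projection $(W,F)\to(X,P_1)$ is étale, and since the base change of $(Y_2,O)$ has trivial branch data upstairs one checks $F=O$. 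Taking a connected component and then a further common dominating étale Galois cover $(Z,O)\to(X,P_1)$ that also dominates $(Y_1,O)$ (for instance a component of the normalized fiber product), we get étale Galois covers $Z\to Y_2$ and $Z\to Y_1$ compatible with the maps to $X$, with Galois groups $G_2=\ker(\Delta\to\Gamma_2)$ and $G_1=\ker(\Delta\to\Gamma_1)$ where $\Delta=\mathrm{Gal}(Z/X)$ computed appropriately (here one uses that $Z\to X$ factors through both, with $Z\to(X,P_1)$ and $Z\to(X,P_2)$ both étale because $P_1\ge P_2$, so any essentially étale cover of $(X,P_1)$ dominating structure is one of $(X,P_2)$).

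Next I would define the functor. By Proposition \ref{parabolic-bundles-well-defined}, $Vect(X,P_2)$ is equivalent to the category of $\Delta$-bundles on $Z$ and likewise $Vect(X,P_1)$ is equivalent to the category of $\Delta'$-bundles on $Z$ where $\Delta'=\mathrm{Gal}(Z/X \text{ rel }P_1)$ — more precisely, via the equivalences of that proposition, a vector bundle on $(X,P_2)$ becomes a $G_2$-bundle datum and one on $(X,P_1)$ a $G_1$-bundle datum, with $G_1\subset G_2$ inside $\Delta=\mathrm{Gal}(Z/X)$ since $P_1\ge P_2$ forces the étale cover $Z\to(X,P_1)$ to be dominated more tightly. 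The desired functor $Vect(X,P_2)\to Vect(X,P_1)$ is then the forgetful/restriction functor that views a $\Gamma_2$-bundle, after pulling back to $Z$ via Lemma \ref{action-on-pull-back-G-bundle} to obtain a $\Delta$-equivariant bundle, as a $\Gamma_1$-bundle on $Y_1$ by taking $G_1'$-invariants of its pushforward, using Lemma \ref{invariant-pushfoward-etale}. Functoriality on morphisms follows from the statements about pullbacks of morphisms in Lemma \ref{action-on-pull-back-G-bundle} and the fact that invariant pushforward is a functor.

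Finally, to see this functor is an embedding (fully faithful), I would argue that it is, up to the equivalences of Proposition \ref{parabolic-bundles-well-defined}, simply the inclusion of $\Gamma_1$-equivariant data into $\Delta$-equivariant data on the fixed curve $Z$: a $\Gamma_1$-bundle on $Y_1$ pulls back to a $\Delta$-bundle on $Z$ whose underlying bundle, by the second part of Lemma \ref{invariant-pushfoward-etale} (étale case), recovers the original after taking $G_1$-invariants of the pushforward, so no information is lost. Fully faithful then amounts to: a $\Delta$-equivariant morphism between bundles that are pulled back from $Y_1$ is the same as a $\Gamma_1$-equivariant morphism downstairs, which again follows from étale descent (Lemma \ref{invariant-pushfoward-etale}) applied to $\mathcal{H}om$ sheaves. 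The main obstacle I anticipate is bookkeeping the group-theoretic compatibilities — verifying that the branch data on the various fiber products are indeed trivial so that the covers in play are honestly étale covers of smooth curves (so that ordinary étale descent applies), and that the chain $G_1 \subset G_2$ of Galois groups is correctly identified — rather than any deep input; once the common cover $Z$ is set up cleanly, the functor and its full faithfulness are formal consequences of Lemmas \ref{action-on-pull-back-G-bundle} and \ref{invariant-pushfoward-etale}.
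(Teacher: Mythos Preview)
Your overall architecture is the paper's: pick \'etale Galois covers $(Y_i,O)\to(X,P_i)$, form a common connected cover $Z$ (the paper calls it $Y$) that is \'etale Galois over $(X,P_1)$, and define the functor as pullback $f_2^*$ from $Y_2$ to $Z$, then identify $\Gamma$-bundles on $Z$ with $Vect(X,P_1)$. The embedding claim is then referred to Lemma \ref{invariant-pushfoward-etale}. So the strategy matches.

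There is, however, a genuine gap in your execution, and it is precisely the bookkeeping you flagged as an obstacle. The key asymmetry you miss is that $f_1:Z\to Y_1$ is \'etale (both $(Z,O)$ and $(Y_1,O)$ are \'etale over $(X,P_1)$), but $f_2:Z\to Y_2$ is \emph{not} \'etale when $P_1>P_2$: at a point of $Z$ over $x\in X$ the local extension is $P_1(x)/P_2(x)$, which is nontrivial exactly where $P_1$ and $P_2$ differ. Consequently $(Z,O)\to(X,P_2)$ is not \'etale, so Proposition \ref{parabolic-bundles-well-defined} does \emph{not} let you realize $Vect(X,P_2)$ as $\Delta$-bundles on $Z$; your sentence ``$Vect(X,P_2)$ is equivalent to the category of $\Delta$-bundles on $Z$'' is false in general. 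Your final paragraph also swaps indices: the image of the functor consists of bundles pulled back from $Y_2$, not $Y_1$, so fully faithfulness concerns $\Gamma_2$-morphisms on $Y_2$, and you cannot invoke the \'etale case of Lemma \ref{invariant-pushfoward-etale} or \'etale descent along $f_2$.

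The paper navigates this by using the \'etale map $f_1$ to identify $Vect(X,P_1)$ with $\Gamma$-bundles on $Z$ (that is the role of Proposition \ref{parabolic-bundles-well-defined}), and then treating $f_2^*:\{\Gamma_2\text{-bundles on }Y_2\}\to\{\Gamma\text{-bundles on }Z\}$ directly. For the embedding it cites only the \emph{first} part of Lemma \ref{invariant-pushfoward-etale}, namely $(f_{2*}f_2^*V)^{G_2}\cong V$, which holds for an arbitrary Galois cover and provides a one-sided inverse; applied to $\mathcal{H}om$-sheaves (or equivalently to global sections of $f_2^*(V^\vee\otimes W)$) this yields full faithfulness without any \'etaleness assumption on $f_2$. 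Once you correct the indices and drop the appeal to \'etale descent along $f_2$, your argument becomes the paper's.
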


\begin{proof}
Let $(Y_i,O)\to (X,P_i)$ be an \'etale $\Gamma_i$-cover. Let $Y$ be a connected component of the normalization of $Y_1\times_X Y_2$. As $P_1\ge P_2$, $Y_2\to X$ is an essentially \'etale cover of $(X,P_1)$. Hence $(Y,O)\to (X,P_1)$ is an \'etale Galois cover with Galois group 
$\Gamma$  which dominates $Y_i$. Let $G_i=ker(\Gamma\onto \Gamma_i)$. Let $f_i:Y\to Y_i$ be the cover obtained from the projection maps. Then $f_i$  are $G_i$-Galois cover and $f_1$ is \'etale. Hence the category of $\Gamma$-bundles on $Y$ is equivalent to the category of $G_1$-bundles on $Y_1$ (by  Proposition \ref{parabolic-bundles-well-defined} and the two categories are equivalent to the category of vector bundles on the geometric formal orbifold $(X,P_1)$. Let $V$ be a $\Gamma_2$-bundle on $Y_2$ then $f_2^*V$ is naturally a $\Gamma$-bundle on $Y$ (Lemma \ref{action-on-pull-back-G-bundle}) and the morphisms of $\Gamma_2$-bundles also pullback to morphisms of $\Gamma$-bundles on $Y$. Hence $f_2^*$ is a functor from $Vect(X,P_2)\to Vect(X,P_1)$. This functor is an embedding follows from Lemma \ref{invariant-pushfoward-etale}.      
\end{proof}

Hence we may think of $Vect(X,P_2)$ as a (fully faithful) subcategory of $Vect(X,P_1)$.

\begin{df}
 An {\em orbifold bundle} on a smooth projective curve $X$ is a vector bundle on $(X,P)$ for some geometric branch data $P$ on $X$. By the above theorem if $V\in Vect(X,P)$ then $V\in Vect(X,P')$ for all geometric $P'\ge P$. Let $V$ and $W$ be two orbifold bundles on $X$ then $V\in Vect(X,P)$ and $W\in Vect(X,Q)$ for some geometric branch data $P$ and $Q$. Then $PQ$ is a geometric branch data on $X$ and $V,W\in Vect(X,PQ)$. Morphisms from an orbifold bundle $V$ to $W$ are morphisms in the category $Vect(X,PQ)$. So we have a category of orbifold bundles on $X$ which we will denote by $PVect(X)$.
\end{df}

Note that this category has direct sums, tensor products and duals. 

\begin{rmk}
 Let $(Y,O)\to (X,P)$ and $(Y',O)\to (X,P)$ be \'etale covers with Galois group $\Gamma$ and $\Gamma'$ such that $Y\to (X,P)$ factors through $Y'$ and $f:Y\to Y'$ be the factor morphism. Let $V'$ be a $\Gamma'$-bundle on $Y'$ and $V=f^*V'$ then $\deg(V)=|\Gamma/\Gamma'|\deg(V')$. 
\end{rmk}

We define the \emph{degree} of a vector bundle $(Y,\Gamma,V)$ on a geometric formal orbifold $(X,P)$ to be $\deg(V)/|\Gamma|$ and the orbifold slope $\parslope(Y,\Gamma,V)$ as the degree divided by the rank of the bundle. Note that $(Y,\Gamma,W)$ is a subbundle of $(Y,\Gamma,V)$ 
iff $W$ is a $\Gamma$-invariant subbundle of $V$. We say that a vector bundle on a geometric formal orbifold $(X,P)$  is {\em semistable} bundle if  it is represented by $(Y,\Gamma,V)$ such that for any subbundle $(Y,\Gamma,W)$, $\parslope(Y,\Gamma,W)\le\parslope(Y,\Gamma,V)$. If a bundle on $(X,P)$ is represented by $(Y,\Gamma,V)$, then
we can define the Frobenius pull back of $(Y,\Gamma,V)$ to be the bundle $(Y,\Gamma,F_Y^*V)$ where $F_Y$ denotes the Frobenius morphism of $Y$. Here we note that if $V$ is a $\Gamma$ bundle on $Y$, then so is 
${F_Y^n}^*V$ for all $n\geq 1$. We call $(Y,\Gamma,V)$ {\em strongly semistable}  bundle  on $(X,P)$ if
all its Frobenius pull backs are semistable.  

\begin{lemma}
A vector bundle $(Y,\Gamma,V)$ on $(X,P)$ is semistable (resp. strongly semistable) iff $V$ is semistable (resp. strongly semistable) vector bundle on $Y$.
\end{lemma}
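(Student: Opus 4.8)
The plan is to prove both directions of the equivalence by unwinding the definition of semistability on the formal orbifold in terms of $\Gamma$-invariant subbundles, and comparing it with ordinary (strong) semistability of $V$ on $Y$. Throughout, fix the \'etale Galois cover $(Y,O)\to(X,P)$ with group $\Gamma$ used to represent the bundle, and recall that $\parslope(Y,\Gamma,W)=\deg(W)/(|\Gamma|\rank W)$, so that comparing orbifold slopes of $(Y,\Gamma,W)$ and $(Y,\Gamma,V)$ is literally the same as comparing the ordinary slopes $\mu(W)$ and $\mu(V)$ on $Y$. Hence the only content is the relationship between arbitrary subbundles of $V$ and $\Gamma$-invariant ones.

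\medskip

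\noindent\textbf{The easy direction.} First I would show that if $V$ is semistable on $Y$, then $(Y,\Gamma,V)$ is semistable on $(X,P)$. Indeed, any orbifold subbundle of $(Y,\Gamma,V)$ is by definition a $\Gamma$-invariant subbundle $W\subset V$, which in particular is an ordinary subbundle of $V$; semistability of $V$ gives $\mu(W)\le\mu(V)$, i.e. $\parslope(Y,\Gamma,W)\le\parslope(Y,\Gamma,V)$. For the strong version, note (as already remarked in the text) that $F_Y^*$ commutes with the $\Gamma$-action, so the Frobenius pullback of $(Y,\Gamma,V)$ is $(Y,\Gamma,F_Y^*V)$, and the same argument applies verbatim once we know each $F_Y^{n*}V$ is semistable on $Y$.

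\medskip

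\noindent\textbf{The harder direction.} Conversely, suppose $(Y,\Gamma,V)$ is semistable on $(X,P)$, i.e. every $\Gamma$-invariant subbundle has slope $\le\mu(V)$; I want to conclude $V$ is semistable on $Y$. The standard device is the \emph{maximal destabilizing subbundle} (the first step of the Harder--Narasimhan filtration) $W\subset V$: it is canonical, hence preserved by every automorphism of $V$, and in particular by the $\Gamma$-action (which acts by semilinear automorphisms of $V$ covering the $\Gamma$-action on $Y$). Therefore $W$ is a $\Gamma$-invariant subbundle, so $\mu(W)\le\mu(V)$ by orbifold semistability; but $W$ is destabilizing precisely when $\mu(W)>\mu(V)$, so no such $W$ exists and $V$ is semistable. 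For the strong version, apply this to each $F_Y^{n*}V$ using that $(Y,\Gamma,F_Y^{n*}V)$ is semistable on $(X,P)$ whenever $(Y,\Gamma,V)$ is strongly semistable.

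\medskip

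\noindent The main obstacle is the one subtle point in the harder direction: one must check that the $\Gamma$-action on the total space of $V$ genuinely fixes the canonical maximal destabilizing subbundle as a subsheaf of $V$. This is automatic because the HN filtration is functorial for isomorphisms, and for each $g\in\Gamma$ the map $\lambda_g\colon g^*V\to V$ from Definition~\ref{G-bundle-defn} is an isomorphism of vector bundles (over the automorphism $g$ of $Y$), so it carries the maximal destabilizing subbundle of $g^*V$ — which is $g^*(\text{that of }V)$, since $g^*$ is an equivalence — isomorphically onto that of $V$; combined with the cocycle condition \eqref{eq:co-cycle} this says exactly that the maximal destabilizing subbundle is a $\Gamma$-subbundle. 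A brief remark should also record that uniqueness of the representation up to the equivalences of Proposition~\ref{parabolic-bundles-well-defined} makes the statement independent of the chosen $(Y,\Gamma)$.
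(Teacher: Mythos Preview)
Your proof is correct and follows essentially the same approach as the paper: the paper's one-line argument is precisely that the Harder--Narasimhan filtration is unique (canonical), hence each term is $\Gamma$-invariant, which is exactly the content of your ``harder direction.'' Your write-up simply unpacks this idea in more detail (including the easy direction and the strong semistability case), but the underlying method is the same.
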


\begin{proof}
 Harder-Narshimshan filtration is unique, hence subbundles in the filtration of a $\Gamma$-bundle is $\Gamma$-invariant.
\end{proof}

\subsection{Tannakian category and fundamental groups}

\begin{thm}\label{tannakian}
Let $X$ be a smooth projective curve and $x$ be any geometric point on $X$. For a geometric formal orbifold 
$(X,P)$ the category of strongly semistable bundles of degree $0$ on $(X,P)$, denoted
by $Vect^0(X,P)^{ss}$, together with the fiber functor associated to $x$ is a neutral Tannakian category. The same is true for the subcategory $Nori(X,P)$ of essentially finite bundles and the subcategory $Etale(X,P)$ of bundles associated to finite \'etale covers. 
\end{thm}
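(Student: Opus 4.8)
The plan is to establish the Tannakian property by transporting it from the known case of curves. Fix an \'etale Galois cover $(Y,O)\to (X,P)$ with group $\Gamma$; by definition $Vect^0(X,P)^{ss}$ is then identified with the category of $\Gamma$-bundles $V$ on $Y$ which are semistable of degree $0$ as ordinary bundles on $Y$ (using the lemma immediately above, together with the fact that $\deg(Y,\Gamma,V)=\deg(V)/|\Gamma|$, so orbifold degree $0$ is the same as $\deg(V)=0$), and all of whose Frobenius pullbacks are semistable. First I would recall that for the smooth projective curve $Y$ the category $Vect^0(Y)^{ss}$ of strongly semistable bundles of degree $0$ is a neutral Tannakian category over $k$ with fiber functor $V\mapsto V_{\tilde x}$ for a point $\tilde x$ above $x$ (this is the positive-characteristic theory of Nori, Mehta--Subramanian, Langer). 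The point is that $Vect^0(X,P)^{ss}$ is the category of $\Gamma$-equivariant objects in $Vect^0(Y)^{ss}$, and taking $\Gamma$-equivariant objects in a neutral Tannakian category again yields a neutral Tannakian category.

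Concretely, the steps are: (1) Check that $Vect^0(X,P)^{ss}$ is an abelian $k$-linear rigid tensor category. Additivity, tensor products, duals and internal Hom are inherited from $Vect(Y)$ since all these operations preserve $\Gamma$-equivariance (Lemma \ref{action-on-pull-back-G-bundle} and its analogue for tensor/dual); that degree $0$ and strong semistability are preserved under $\otimes$ and duals is the standard fact that (strong) semistability of slope $0$ is closed under tensor product in any characteristic for the strongly semistable subcategory, and a $\Gamma$-invariant subbundle destabilizing $V\otimes W$ would destabilize it in $Vect(Y)$. Kernels and cokernels of morphisms of $\Gamma$-bundles of slope $0$ are again $\Gamma$-bundles of slope $0$ and strongly semistable, because a morphism between semistable bundles of the same slope has kernel and image which are again semistable of that slope and saturated, and the $\Gamma$-action descends; this is where one uses that the Harder--Narasimhan and the slope-$0$ Jordan--H\"older pieces are canonical hence $\Gamma$-stable. (2) Exhibit the fiber functor: send $(Y,\Gamma,V)$ to the vector space $V_{\tilde x}$ where $\tilde x\in Y$ lies above the geometric point $x$ (which is not in $\supp(P)$, so the cover is \'etale over it and the fiber is well-defined up to the usual torsor ambiguity). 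This functor is exact, faithful (a nonzero $\Gamma$-bundle has nonzero fiber), $k$-linear and tensor-compatible, because all of this already holds for the fiber functor on $Vect^0(Y)^{ss}$ and we are merely restricting it along the forgetful functor. (3) Conclude via the standard recognition theorem (Deligne--Milne) that a $k$-linear abelian rigid tensor category with $\mathrm{End}(\mathbf 1)=k$ and a faithful exact $k$-linear tensor functor to $\mathrm{Vect}_k$ is neutral Tannakian; $\mathrm{End}(\mathcal O_Y \text{ with trivial }\Gamma\text{-action})=k$ since $Y$ is connected and the only $\Gamma$-invariants of $k$ are $k$.

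For the two subcategories $Nori(X,P)$ and $Etale(X,P)$ the argument is the same with one extra closure check. A bundle on $(X,P)$ is \emph{essentially finite} (resp.\ \emph{\'etale}) if the underlying $\Gamma$-bundle $V$ on $Y$ is essentially finite (resp.\ arises from a finite \'etale cover of $Y$) in Nori's sense; since Nori's category $Nori(Y)$ and the \'etale subcategory $Etale(Y)$ are full sub-Tannakian categories of $Vect^0(Y)^{ss}$ stable under $\otimes$, duals, subquotients and extensions-inside, and all these operations commute with the $\Gamma$-action, the $\Gamma$-equivariant objects form full sub-Tannakian categories of $Vect^0(X,P)^{ss}$. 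One should note for $Etale(X,P)$ that if $E$ on $Y$ trivializes after pulling back along a finite \'etale $Z\to Y$ and $E$ carries a $\Gamma$-action, one may replace $Z$ by a $\Gamma$-equivariant (e.g.\ Galois, $\Gamma$-stable) refinement, so the descent of the trivialization is compatible with the group action; this is the only mildly nontrivial point.

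The main obstacle I anticipate is step (1), specifically verifying that $Vect^0(X,P)^{ss}$ is \emph{abelian}: one must show morphisms of strongly semistable $\Gamma$-bundles of degree $0$ have kernels and cokernels \emph{within the category}, which requires that a morphism $\phi\colon V\to W$ between semistable slope-$0$ bundles on $Y$ is strict, i.e.\ $\ker\phi$, $\mathrm{im}\,\phi$, $\mathrm{coker}\,\phi$ are all semistable of slope $0$ and saturated, and moreover that strong semistability (stability of all Frobenius pullbacks) is inherited by these subquotients. The saturatedness and slope-$0$ semistability of subquotients is classical for semistable sheaves of a fixed slope on a curve; that it upgrades to \emph{strong} semistability follows because Frobenius pullback is exact and commutes with the formation of kernels and images, so $F_Y^{n*}$ of the subquotient sequence is again a sequence of slope-$0$ semistable bundles by the same classical fact applied to $F_Y^{n*}V\to F_Y^{n*}W$. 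Once this is in place, $\Gamma$-equivariance of the canonical kernel/image/cokernel is automatic, and the rest of the proof is the formal machinery of Tannakian categories applied verbatim as on $Y$.
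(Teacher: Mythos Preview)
Your proposal is correct and follows essentially the same approach as the paper: reduce to the known fact that $Vect^0(Y)^{ss}$ is neutral Tannakian for the curve $Y$, and then observe that the category of $\Gamma$-equivariant objects inherits the Tannakian structure because kernels and cokernels of $\Gamma$-morphisms are again $\Gamma$-bundles (and remain in the relevant subcategory). The paper's proof is a terse two-sentence version of exactly this, citing the curve case and the closure under kernels/cokernels; your write-up simply supplies the details the paper leaves implicit.
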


\begin{proof}
It is well known that strongly semistable vector bundles of degree $0$ on a smooth projective connected
curve form a neutral Tannakian category (cf. [BPS]). Now the theorem will follow by noting that if you 
have a  $\Gamma$-morphism of $\Gamma$-bundles, then both kernel and cokernel are again $\Gamma$-bundle. 
The last staement also follows similarly since the kernel and cokernels exist in these subcategories. 
\end{proof}

\begin{rmk}\label{Nori-pi_1}
We denote the affine group scheme duals to the Tannakian categories $Vect^0(X,P)^{ss}$, $Nori(X,P)$ and $Etale(X,P)$ by $\mathcal{M}(X,P)$, $\mathcal{N}(X,P)$ and $\pi_1^{alg}(X,P)$ respectively. Like in the projective curve case $\pi_1^{alg}(X,P)$ is isomorphic to the \'etale fundamental group  $\pi_1(X,P)$. Note that $\mathcal{N}(X,O)$ is the Nori fundamental group of $X$. Also note that $Vect^0(X,P)^{ss}\supset Nori(X,P)\supset Etale(X,P)$, so $\pi_1(X,P)$ is a homomorphic image of $\mathcal{N}(X,P)$ and $\mathcal{N}(X,P)$ is a quotient of $\mathcal{M}(X,P)$.
\end{rmk}

Let us denote by $PVect^0(X)^{ss}$, the category of strongly semistable orbifold bundles of degree $0$
on $X$. Then we have:

\begin{thm} $PVect^0(X)^{ss}$, together with the fiber functor associated to $x$ is a neutral Tannakian category. If we denote the affine group scheme dual to this Tannakian category by $\mathcal{M}(X)$, then 
$\mathcal{M}(X,P)$ is a quotient of $\mathcal{M}(X)$ for all geometric branch data $P$ on $X$.
\end{thm}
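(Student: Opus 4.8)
The plan is to verify the two Tannakian axioms that are not already covered by the component categories, and then to identify the surjection $\mathcal{M}(X)\onto\mathcal{M}(X,P)$. First I would check that $PVect^0(X)^{ss}$ is an abelian $k$-linear rigid tensor category with $\mathrm{End}(\mathbf{1})=k$. The tensor structure, duals and direct sums were already noted to exist on $PVect(X)$; the key point is that these operations preserve the conditions ``degree $0$'' and ``strongly semistable''. For a fixed geometric $P$ containing the branch data of all bundles involved, a bundle on $(X,P)$ represented by $(Y,\Gamma,V)$ has orbifold degree $\deg(V)/|\Gamma|$, which is additive in tensor products and direct sums and unchanged under duals; and by the Lemma above semistability of the orbifold bundle is equivalent to semistability of $V$ on $Y$, so by the well-known fact for curves (cf. [BPS]) the tensor product and dual of strongly semistable degree $0$ bundles on $Y$ are again strongly semistable of degree $0$. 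Kernels and cokernels of morphisms: a morphism of orbifold bundles is a $\Gamma$-equivariant map of $\Gamma$-bundles on some common $(Y,\Gamma)$, and as in the proof of Theorem \ref{tannakian} its kernel and cokernel inherit the $\Gamma$-structure and, being sub/quotients of strongly semistable degree $0$ bundles with equality of slopes, are again strongly semistable of degree $0$; abelianness follows.

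Next I would address the fiber functor. For the geometric point $x$ on $X$ (chosen, as in Theorem \ref{tannakian}, away from the supports of the relevant branch data), an orbifold bundle $(Y,\Gamma,V)$ has a well-defined fiber: pick $\tilde x\in Y$ above $x$ and set $\omega_x(Y,\Gamma,V)=V_{\tilde x}$, a $k$-vector space of dimension $\mathrm{rank}(V)$. One checks this is independent of the choice of $\tilde x$ (the $\Gamma$-action identifies the fibers over different preimages) and of the presenting cover (using $f^*$ compatibility from Lemma \ref{action-on-pull-back-G-bundle} and Lemma \ref{invariant-pushfoward-etale}), is exact and faithful, and is a tensor functor. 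This is essentially the same verification as in Theorem \ref{tannakian}, done uniformly over all geometric $P$; the only new wrinkle is consistency of the various $\omega_x$ as $P$ grows, which follows from the embeddings $Vect(X,P_2)\hookrightarrow Vect(X,P_1)$ being fiber-functor-compatible.

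Having $\mathcal{M}(X)=\mathrm{Aut}^\otimes(\omega_x)$, I would produce the quotient map by Tannakian duality: $Vect^0(X,P)^{ss}$ is a full tensor subcategory of $PVect^0(X)^{ss}$ closed under subquotients (indeed every object of $PVect^0(X)^{ss}$ lies in some $Vect^0(X,P')^{ss}$, and $Vect^0(X,P)^{ss}$ sits inside $Vect^0(X,P')^{ss}$ for $P'\ge P$ as a full subcategory closed under subquotients by the embedding theorem above), with the compatible fiber functor $\omega_x$. By the standard dictionary (a full tensor subcategory stable under subobjects corresponds to a faithfully flat quotient of affine group schemes), the inclusion $Vect^0(X,P)^{ss}\hookrightarrow PVect^0(X)^{ss}$ induces a faithfully flat homomorphism $\mathcal{M}(X)\onto\mathcal{M}(X,P)$, i.e. $\mathcal{M}(X,P)$ is a quotient of $\mathcal{M}(X)$.

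The main obstacle I anticipate is not any single deep step but the bookkeeping needed to make ``take a common geometric $P$'' rigorous and functorial: one must know that Proposition \ref{product-geometric} lets any finite collection of orbifold bundles be realized on one geometric $(X,P)$, that the embeddings $Vect(X,P_2)\hookrightarrow Vect(X,P_1)$ are tensor-, dual-, and fiber-functor-compatible and form a directed system, and that ``strongly semistable of degree $0$'' is detected consistently at all levels; only then does $PVect^0(X)^{ss}$ genuinely become the $2$-colimit of the $Vect^0(X,P)^{ss}$ and inherit the Tannakian structure. Once that colimit description is in place, both the neutral Tannakian property and the quotient statement are formal consequences of Theorem \ref{tannakian} and the Tannakian subcategory–quotient correspondence.
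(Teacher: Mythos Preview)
Your proposal is correct and supplies precisely the details the paper omits: in the paper this theorem is stated without any proof, immediately after Theorem~\ref{tannakian}, with the evident intention that it follows by the same reasoning applied uniformly over all geometric branch data $P$. Your argument---realizing $PVect^0(X)^{ss}$ as the directed $2$-colimit of the $Vect^0(X,P)^{ss}$ via Proposition~\ref{product-geometric} and the embedding theorem, checking compatibility of the fiber functors, and then invoking the Tannakian subcategory/faithfully-flat-quotient dictionary for the second assertion---is exactly the expected elaboration, and the bookkeeping obstacle you flag is indeed the only nontrivial point.
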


\subsection{Projection formula}

Let $f:(X_1,P_1)\to (X_2,P_2)$ be an \'etale cover. 
Let $(Y_2,O)\to (X_2,P_2)$ be a Galois \'etale cover, 
with Galois group $\Gamma$. Then define  the direct image of vector bundles on the formal orbifold 
$(X_1,P_1)$ as follows. Consider the normalization of the fiber product 
$Y_1:= \widetilde{X_1\times _{X_2} Y_2}$. Then $Y_1\to Y_2$ is an \'etale cover, though need not be connected.
Moreover $(Y_1,O) \to (X_1,P_1)$ is an \'etale Galois morphism of formal orbifolds with Galois group 
$\Gamma$. Hence every vector bundle on $(X_1,P_1)$ is represented by a $\Gamma$-bundle on $Y_1$. Then the 
usual direct image on $Y_2$ is clearly a $\Gamma$-bundle, hence defines a vector bundle on the geometric formal orbifold
$(X_2,P_2)$. We define this as the \emph{direct image} of vector bundles in $Vect(X_1,P_1)$ and denote it by $\hat{f}_*$. 

Consider  the Galois \'etale cover $(Y_2,O)\to \scrX_2$ with Galois group  $\Gamma$  and the 
normalized fiber product $Y_1$ as above. Then $(Y_1,O)\to (X_1,P_1)$ is a Galois cover with Galois group 
$\Gamma$. The pull back under $Y_1\to Y_2$ of a $\Gamma$ 
bundle on $Y_2$  defines a $\Gamma$ bundle on $Y_1$, hence it defines a vector bundle on the geometric formal orbifold 
$(X_1,P_1)$. We define this to be the pull back of vector bundles on geometric formal orbifold and denote it by 
$\hat{f}^*$.

Recall that the tensor product of two vector bundles on a geometric formal orbifold $(X,P)$ is defined to be the usual
tensor product of vector bundles on an \'etale Galois cover with trivial branch data where both 
bundles are represented. We denote it by $\hat\otimes$.  

Now we can state our next result which is an analogue of the projection formula.

\begin{thm} \label{projection-formula}
Let $f:(X_1,P_1)\to (X_2,P_2)$ be an \'etale cover. Assume $\mathcal{V}$ is a vector bundle on the geometric formal orbifold
$(X_2,P_2)$ and $\mathcal F$ is a vector bundle  on the geometric formal orbifold $(X_1,P_1)$. Then we have 
$$\hat{f}_*(\hat{f}^*V\hat\otimes \mathcal{F}) \cong V\hat\otimes \hat{f}_*(\mathcal{F})$$
\end{thm}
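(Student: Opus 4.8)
The plan is to reduce the statement about orbifold bundles to the classical projection formula for vector bundles on the Galois \'etale cover $Y_2$, by carefully unwinding the definitions of $\hat f_*$, $\hat f^*$ and $\hat\otimes$ given just above. First I would fix a Galois \'etale cover $(Y_2,O)\to(X_2,P_2)$ with group $\Gamma$ on which $V$ is represented by a $\Gamma$-bundle $\tilde V$, and form the normalized fiber product $Y_1=\widetilde{X_1\times_{X_2}Y_2}$, so that by the discussion preceding the theorem $(Y_1,O)\to(X_1,P_1)$ is a $\Gamma$-Galois \'etale cover and the induced map $g:Y_1\to Y_2$ is \'etale (possibly disconnected). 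By Proposition~\ref{parabolic-bundles-well-defined} we may assume $\mathcal F$ is represented by a $\Gamma$-bundle $\tilde{\mathcal F}$ on $Y_1$.

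Next I would translate each operation: by definition $\hat f^*V$ is represented by $g^*\tilde V$ on $Y_1$ (with its natural $\Gamma$-structure, Lemma~\ref{action-on-pull-back-G-bundle}); the tensor product $\hat f^*V\hat\otimes\mathcal F$ is represented by $g^*\tilde V\otimes\tilde{\mathcal F}$ on $Y_1$ (the $\Gamma$-structure being the diagonal one, which makes sense since both are $\Gamma$-bundles on the same cover $Y_1$); and $\hat f_*$ of a bundle represented on $Y_1$ is the bundle on $(X_2,P_2)$ represented by the ordinary direct image $g_*$ of that $\Gamma$-bundle on $Y_2$. Thus the left-hand side is represented by $g_*(g^*\tilde V\otimes\tilde{\mathcal F})$ and the right-hand side by $\tilde V\otimes g_*\tilde{\mathcal F}$, both as $\Gamma$-bundles on $Y_2$. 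So the theorem reduces to the assertion that the classical projection-formula isomorphism $g_*(g^*\tilde V\otimes\tilde{\mathcal F})\cong\tilde V\otimes g_*\tilde{\mathcal F}$ on $Y_2$ is $\Gamma$-equivariant. Since $g:Y_1\to Y_2$ is finite (even \'etale) the usual projection formula isomorphism exists at the level of sheaves, and one checks equivariance by noting it is built from the adjunction/multiplication maps which are functorial and hence commute with the $\Gamma$-action that exists compatibly on $g^*\tilde V$, $\tilde{\mathcal F}$ and $g_*$ (compatibility of the $\Gamma$-actions on $Y_1$ and $Y_2$ over the $\Gamma$-equivariant morphism $g$ being exactly the setup recorded before the theorem and in Lemma~\ref{action-on-pull-back-G-bundle}).

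The only genuinely delicate point is the equivariance check: one must verify that the canonical morphism defining the projection formula is a morphism of $\Gamma$-sheaves, i.e. that the cocycle data $\lambda$ of \eqref{eq:co-cycle} on $g^*\tilde V\otimes\tilde{\mathcal F}$ is carried to that on $\tilde V\otimes g_*\tilde{\mathcal F}$. I would handle this by working on the product $\Gamma\times Y_2$ (and $\Gamma\times Y_1$): the projection-formula isomorphism is natural in both arguments and compatible with flat base change along the projection and action maps $p,\mu:\Gamma\times Y_2\to Y_2$, and the cartesian squares relating the $\Gamma$-actions on $Y_1$ and $Y_2$ (analogous to \eqref{eq:projection} and \eqref{eq:action}) give, upon pulling back, the identity of the two cocycles. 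Once equivariance is established, the resulting isomorphism of $\Gamma$-bundles on $Y_2$ is precisely an isomorphism of the vector bundles on $(X_2,P_2)$ that it represents, which is the claim. I expect the bookkeeping of these commutative diagrams — rather than any substantive geometric input — to be the main obstacle, and it is entirely parallel to the diagram-chasing already carried out in the proof of Lemma~\ref{action-on-pull-back-G-bundle}.
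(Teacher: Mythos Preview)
Your proposal is correct and follows exactly the paper's approach: reduce to the classical projection formula for the finite map $\tilde f:Y_1\to Y_2$ between the chosen \'etale covers and then observe that this isomorphism is one of $\Gamma$-bundles. The paper's proof is in fact terser than yours---it simply asserts the reduction and leaves the $\Gamma$-equivariance of the projection-formula isomorphism implicit---so your additional discussion of naturality and the cocycle compatibility just fills in what the paper takes for granted.
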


\begin{proof} Choose an \'etale Galois cover $(Y_2,O)$ of $(X_2,P_2)$ with a $\Gamma$-bundle $V$ representing the given vector bundle on $(X_2,P_2)$. Also choose a $\Gamma$-bundle $F$ on the
normalization of the fiber product $Y_1$ representing the given bundle on $(X_1,P_1)$. Then the 
above theorem is equivalent to the statement that we have isomorphism on $Y_2$ of $\Gamma$-bundles 
$$\tilde{f}_*(\tilde{f}^*(V) \otimes F) \cong  V\otimes \tilde{f}_*(F)$$
where $\tilde{f}$ is the map from the fiber product, $Y_1\to Y_2$.
\end{proof}

\end{document}